\DeclareFontFamily{OT1}{cmss}{} \DeclareFontShape{OT1}{cmss}{m}{n} {<5> <6> <7> <8> <9> <10> <11> <12> <13> <14.4> cmss10}{}
\DeclareMathAlphabet{\cmss}{OT1}{cmss}{m}{n}
\newtheoremstyle{thm}{1.8ex}{1.8ex}{\itshape\rmfamily}{} {\bfseries\rmfamily}{}{2ex}{}
\newtheoremstyle{def}{1.8ex}{1.8ex}{\slshape\rmfamily}{} {\bfseries\rmfamily}{}{2ex}{}
\newtheoremstyle{rem}{1.8ex}{1.8ex}{\rmfamily}{} {\bfseries\rmfamily}{}{2ex}{}
\newenvironment{proofsect}[1] {\vspace{0.2cm}\noindent{\rmfamily\itshape#1.}}{\qed\vspace{0.15cm}}
\theoremstyle{thm}
\newtheorem{theorem}{Theorem}[section]
\newtheorem{lemma}[theorem]{Lemma}
\newtheorem{proposition}[theorem]{Proposition}
\newtheorem*{Main Theorem}{Main Theorem.}
\newtheorem{corollary}[theorem]{Corollary}
\newtheorem*{special theorem}{Lindeberg-Feller Theorem for Martingales}
\newtheorem{conjecture}[theorem]{Conjecture}
\theoremstyle{def}
\newtheorem{definition}[theorem]{Definition}
\theoremstyle{rem}
\newtheorem{remark}[theorem]{Remark}
\newtheorem{remarks}[theorem]{Remarks}
\numberwithin{equation}{section}
\renewcommand{\small}{\fontsize{9}{9}\selectfont}
\renewcommand{\section}{\secdef\sct\sect}
\newcommand{\sct}[2][default]{%
\refstepcounter{section}
\addcontentsline{toc}{section}{{\tocsection {}{\thesection}{\!\!\!\!#1\dotfill}}{}}
\vspace{0.7cm}
\centerline{\scshape\thesection.\ #1} \nopagebreak \vspace{0.2cm}}
\newcommand{\sect}[1]{%
\vspace{0.4cm} \centerline{\large\scshape\rmfamily #1}
\vspace{0.2cm}}
\renewcommand{\subsection}{\secdef\subsct\sbsect}
\newcommand{\subsct}[2][default]{\refstepcounter{subsection}
\addcontentsline{toc}{subsection}
{{\tocsection{\!\!}{\hspace{1.2em}\thesubsection}{\!\!\!\!#1\dotfill}}{}}
\nopagebreak\vspace{0.45\baselineskip} {\flushleft\bf
\thesubsection~\bf #1.~}
\\*[3mm]\noindent
\nopagebreak}
\newcommand{\sbsect}[1]{\vspace{0.1cm}\noindent
\textbf{#1.~}\vspace{0.1cm}}
\renewcommand{\subsubsection}{%
\secdef \subsubsect\sbsbsect}
\newcommand{\subsubsect}[2][default]{%
\refstepcounter{subsubsection} 
\addcontentsline{toc}{subsubsection}{{\tocsection{\!\!}
{\hspace{3.05em}\thesubsubsection}{\!\!\!\!#1\dotfill}}{}}
\nopagebreak
\vspace{0.15\baselineskip} \nopagebreak {\flushleft\rmfamily
\itshape\thesubsubsection
\ \rmfamily #1\/.}\ }
\newcommand{\sbsbsect}[1]{\vspace{0.1cm}\noindent
\rmfamily \itshape
\arabic{section}.\arabic{subsection}.\arabic{subsubsection} \
\sffamily #1\/.\ }
\def\myffrac#1#2 in #3{\raise 2.6pt\hbox{$#3 #1$}\mkern-1.5mu\raise 0.8pt\hbox{$#3/$}\mkern-1.1mu\lower 1.5pt\hbox{$#3 #2$}}
\newcommand{\ffrac}[2]{\mathchoice%
{\myffrac{#1}{#2} in \scriptstyle}
{\myffrac{#1}{#2} in \scriptstyle}
{\myffrac{#1}{#2} in \scriptscriptstyle}
{\myffrac{#1}{#2} in \scriptscriptstyle}
}
\newcommand{\D}{\mathbb D}
\newcommand{\G}{\mathbb G}
\newcommand{\N}{\mathbb N}
\newcommand{\R}{\mathbb R}
\newcommand{\B}{\mathbb B}
\renewcommand{\H}{\mathbb H}
\newcommand{\PP}{\mathbb P}
\newcommand{\Z}{\mathbb Z}
\newcommand{\Q}{\mathbb Q}
\newcommand{\E}{\mathbb E}
\renewcommand{\AA}{\mathcal{A}} 
\newcommand{\scrC}{\mathscr{C}} 
\newcommand{\EE}{\mathcal{E}}
\newcommand{\FF}{\mathcal{F}}
\newcommand{\scrF}{\mathscr{F}}
\newcommand{\scrG}  {\mathscr{G}} 
\newcommand{\LL} {\mathcal{L}}
\newcommand{\twoeqref}[2]{(\ref{#1}--\ref{#2})}
\newcommand{\diam}{\text{\rm diam}}
\newcommand{\cc}{{\text{\rm c}}}
\newcommand{\texte}{{\text{\rm e}}}
\newcommand{\1}{\text{\bf 1}}
\newcommand{\textd}{\text{\rm d}}
\newcommand{\hate}{\hat{\text{\rm e}}}
\newcommand{\dist}{\text{\rm dist}}
\title[Heat-kernel decay in random conductance models]
{Subdiffusive heat-kernel decay in four-dimensional\\i.i.d.\ random conductance models}
\author[M.~Biskup and O.~Boukhadra]
{M.~Biskup$^{1,2}$ \and\: O.~Boukhadra$^{3}$}
\begin{document} 
\thanks{\small\hglue-4.5mm
\copyright\,\textrm{2012} \textrm{M.~Biskup and O.~Boukhadra}.
Reproduction, by any means, of the entire article for non-commercial purposes is permitted without charge.}

\maketitle
\vspace{-5mm}
\centerline{\textit{$^1$Department of Mathematics, UCLA, Los Angeles, California, USA}}
\centerline{\textit{$^2$School of Economics, University of South Bohemia, \v Cesk\'e Bud\v ejovice, Czech Republic}}
\centerline{\textit{$^3$D\'epartement de Math\'ematiques, UMC, Constantine, Algeria}}

\vspace{-2mm}
\begin{abstract}
We study the diagonal heat-kernel decay for the four-dimensional nearest-neighbor random walk (on~$\Z^4$) among i.i.d.~random conductances that are positive, bounded from above but can have arbitrarily heavy tails at zero. It has been known that the quenched return probability $\cmss P_\omega^{2n}(0,0)$ after $2n$ steps is at most $C(\omega) n^{-2} \log n$, but the best lower bound till now has been $C(\omega) n^{-2}$. Here we will show that the $\log n$ term marks a real phenomenon by constructing an environment, for each sequence $\lambda_n\to\infty$, such that 
$$
\cmss P_\omega^{2n}(0,0)\ge C(\omega)\log(n)n^{-2}/\lambda_n,
$$
with $C(\omega)>0$~a.s., along a deterministic subsequence of $n$'s. Notably, this holds simultaneously with a (non-degenerate) quenched invariance principle. As for the $d\ge5$ cases studied earlier, the source of the anomalous decay is a trapping phenomenon although the contribution is in this case collected from a whole range of spatial scales.
\end{abstract}



\section{{Introduction and Results}}
\noindent
Recent years have witnessed remarkable progress in the understanding of a class of reversible random walks in random environments that go under the name Random Conductance Model. The setting of a typical instance of this problem is as follows:  Consider the $d$-dimensional hypercubic lattice $\Z^d$ and let $\B^d$ denote the set of unordered nearest-neighbor pairs.  For  a configuration $\omega=(\omega_b)_{b\in\B^d} \in(0,\infty)^{\B^d}$, define the Markov chain $X=(X_n)_{n\geq0}$ with state space $\Z^d$ and transition probability
\begin{equation}
\label{protra}
\cmss P_{\omega}(x,y):=
\begin{cases}
\frac{\omega_{xy}}{\pi_{\omega}(x)},\qquad& (x,y)\in \B^d,\\
0,\qquad& \text{otherwise,}
\end{cases}
\end{equation}
where 
\begin{equation}
\pi_\omega(x):=\sum_{y\colon (x,y)\in\B^d}\omega_{xy}.
\end{equation}
Sometimes even~$\omega_b=0$ is permitted; the state space is then just $\{x\colon\pi_\omega(x)>0\}$ or, when such exists, an infinite connected component thereof. Let $P_\omega^x$ denote the distribution of~$X$ subject to $P_\omega^x(X_0=x)=1$. The principal items of interest are various asymptotics of the law of~$X$ under~$P_\omega^x$ in the situation when~$\omega$ is a sample from a probability distribution~$\PP$.

Much of the early effort by probabilists concerned the validity of the (functional) Central Limit Theorem. In a sequence of papers (Kipnis and Varadhan~\cite{Kipnis-Varadhan}, De Masi, Ferarri, Goldstein and Wick~\cite{demas1,demas2}, Sidoravicius and Sznitman~\cite{Sidoravicius-Sznitman}, Berger and Biskup~\cite{BB}, Mathieu and Piatnitski~\cite{Mathieu-Piatnitski}, Mathieu~\cite{Mathieu-CLT}, Biskup and Prescott~\cite{BP}, Barlow and Deu\-schel~\cite{Barlow-Deuschel}, Andres, Barlow, Deuschel and Hambly~\cite{ABDH}), it has gradually been established that, as $n\to\infty$, the law of $t\mapsto X_{\lfloor nt\rfloor}/\sqrt{n}$ under~$P_\omega^0$ scales to a non-degenerate Brownian motion for almost every environment $\omega$, provided that certain conditions are met by the law of~$\omega$. For \emph{i.i.d.}\ laws~$\PP$ concentrated on $[0,\infty)^{\B^d}$, in~$d\ge2$ these conditions are
\begin{equation}
\label{E:1.3}
\E (\omega_b) <\infty\quad\text{and}\quad  \PP(\omega_b>0)>p_\cc(d),
\end{equation}
where~$\E$ denotes the expectation in~$\PP$ and $p_\cc(d)$ is the critical threshold for bond percolation on~$\Z^d$. In $d=1$ the second condition needs to be replaced by $\E (\omega^{-1}_b)<\infty$; independence is not required (e.g., Biskup and Prescott~\cite{BP}). The same conditions as in $d=1$ are sufficient to imply the quenched CLT in $d=2$ for general ergodic environments (Biskup~\cite{Biskup-review}).

While the proof of the functional CLT is remarkably soft for the law on path space that is averaged over the environment --- the so called \emph{annealed} or \emph{averaged} law --- the almost-sure or \emph{quenched} law generally requires  also the heat kernel upper bound,
\begin{equation}
\label{Intro:HC-upper}
\cmss P^n_\omega (x,y)\leq \frac{c_1}{n^{d/2}}\,\texte^{-c_2\vert x-y\vert^2/n}, \quad n\geq N(\omega,x,y).
\end{equation}
This is conceptually wrong as the CLT seems to require a \emph{local}-CLT type of estimate. 
Notwithstanding, for environments possessing some form of uniform ellipticity, these heat-kernel upper bounds can be obtained. Indeed, they are the results of the sequence of papers by Delmotte~\cite{Delmotte}, Benjamini and Mossel~\cite{Benjamini-Mossel}, Heicklen and Hoffman~\cite{Heicklen-Hoffman}, Mathieu and Remy~\cite{Mathieu-Remy} culminating in Barlow's work~\cite{Barlow} for the simple random walk on the supercritcal percolation cluster. (We regard this case as uniformly elliptic too although on a spatially inhomogeneous graph.) 
However, in the environments with heavy tails at zero, it was in fact discovered that \eqref{Intro:HC-upper} may fail (Fontes and Mathieu~\cite{Fontes-Mathieu}, Berger, Biskup, Hoffman and Kozma~\cite{BBHK}) and a coarse-graining procedure was required to overcome this difficulty and derive the quenched functional CLT (Mathieu~\cite{Mathieu-CLT}, Biskup and Prescott~\cite{BP}). We note that when the left condition in \eqref{E:1.3} fails, the scaling limit of~$X$ may be not be diffusive at all (Barlow and \v Cern\'y~\cite{Barlow-Cerny}, Barlow and Zheng~\cite{Barlow-Zheng}, \v Cern\'y~\cite{Cerny}).

The study~\cite{BBHK} presents two types of results. First, for i.i.d.\ environment laws bounded from above, it restricts the diagonal heat-kernel decay by the following estimates
\begin{equation}
\label{E:me}
\cmss P_\omega^n(0,0)\le C(\omega)\,
\begin{cases}
n^{-d/2},\qquad&d=2,3,
\\
n^{-2}\log n,\qquad&d=4,
\\
n^{-2},\qquad&d\ge5,
\end{cases}
\end{equation}
where $C(\omega)<\infty$ almost surely, with the additional observation,
\begin{equation}
\label{E:Noam}
n^2\cmss P^{2n}_\omega(0,0)\xrightarrow[n \to \infty]{}0\quad \PP\text{-a.s.}\qquad\text{in }d\ge5.
\end{equation}
Second, for any sequence $\lambda_n\uparrow\infty$, an i.i.d.\ environment law is constructed so that
\begin{equation}
\label{E:Gaddy}
\cmss P_\omega^{2n_k}(0,0)\geq \frac{C(\omega)}{\lambda_{n_k}n^2_k},\quad k\geq 1,
\end{equation}
along a deterministic sequence $n_k\rightarrow\infty$, where $C(\omega)>0$ almost surely. Since the Central Limit Theorem for $X_n$ holds, we also have
\begin{equation}
\label{E:1.8}
\cmss P_\omega^{2n}(0,0)\geq \frac{C(\omega)}{n^{d/2}},
\end{equation}
cf, e.g.,~\cite[Remark~2.2]{BP}.

Putting the bits and pieces together we conclude that the return probability $\cmss P_\omega^{2n}(0,0)$ \emph{always} decays diffusively in spatial dimensions $d=2,3$, while, in dimensions $d\geq 5$, it can decay as slow as $o(n^{-2})$. (In $d=1$, the decay can be arbitrarily slow.) Further progress has been made by Boukhadra~\cite{B1,B2} who showed that the transition from regular decay $n^{-d/2}$ to anomalous decay $n^{-2+o(1)}$ in $d\geq5$ actually occurs in the class of power-law tails --- with the exponent $\gamma=d/2$ in $\PP(0<\omega_b<s)\sim s^{\gamma}$ being presumably the critical for the anomaly to appear. In $d\ge5$ this meshes nicely with the annealed estimates obtained by Fontes and Mathieu~\cite{Fontes-Mathieu}.

\smallskip
The combined results of~\cite{BBHK,B1,B2} provide definitive answers in all  spatial dimensions except $d=4$, where  \eqref{E:me} and \eqref{E:Gaddy} differ by a logarithmic factor. Computations for time-dependent environments (cf.~Theorem~5.3 of \cite{BBHK}) suggested that \eqref{E:me} is presumably the one closer to the truth, but any feasible method of proof seemed to require control of \textit{off-diagonal} heat-kernel \emph{lower} bounds. This would seem in turn to demand --- in order to avoid circuitous reasoning --- running a complicated induction along scales. We are thus pleased to report on a conceptually straightforward, albeit still technically complicated, proof of the following theorem:

\begin{theorem}
\label{Th}
Assume $d=4$. For every sequence $\{\lambda_n\}$ with $\lambda_n\uparrow\infty$, there exists an i.i.d.\ environment law~$\PP$ with $\PP (0<\omega_b\leq1)=1$, a random variable $C(\omega)$ with $\PP(C>0)=1$ and a sequence  $n_k\rightarrow\infty$ such that for every $n\in\{n_k\}$,
\begin{equation}
\label{E:main}
\cmss P_\omega^{2n}(0,0)\geq C(\omega)\frac{\log n}{\lambda_{n}n^2}.
\end{equation}
\end{theorem}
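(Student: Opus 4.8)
The plan is to \emph{engineer} the i.i.d.\ law~$\PP$ so that, away from a sparse, scale-indexed family of local ``traps,'' the environment coincides with a uniformly elliptic environment on a percolation cluster of density arbitrarily close to one --- so that the classical heat-kernel estimates of Barlow~\cite{Barlow} and Delmotte~\cite{Delmotte}, \emph{including off-diagonal lower bounds}, are available as \emph{inputs} rather than as things to be proved --- while the heavy left tail of~$\PP$ enters only through those traps. Concretely I would take~$\PP$ supported on~$(0,1]$ with most mass near~$1$ and $\PP(\omega_b<s)\asymp s^\gamma$ as $s\downarrow 0$, where $\gamma$ is the critical value dictated by the trap construction below (for a single-edge trap in~$d=4$ this is $\gamma=\tfrac1{2(2d-1)}=\tfrac1{14}$), chosen small enough that the ``bulk'' $\{b:\omega_b\ge c_0\}$ percolates on~$\Z^4$ at density $1-o(1)$, and with whatever additional slack is needed to accommodate an arbitrary $\lambda_n\uparrow\infty$. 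For any such~$\PP$ the quenched invariance principle holds automatically by Andres--Barlow--Deuschel--Hambly~\cite{ABDH} (or Biskup--Prescott~\cite{BP}), since $\PP(\omega_b>0)=1>p_\cc(4)$ and $\E\omega_b<\infty$; so the ``simultaneously'' clause costs nothing, and the entire content is the lower bound~\eqref{E:main}.

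A \emph{scale-$R$ trap} will be an edge $e=\{x,y\}$ with $\omega_e\ge c_0$ lying on the bulk, all of whose $2(2d-1)=14$ adjacent edges have conductance below a threshold $\epsilon_n\asymp1/n$; once the walk reaches such a $T$ it oscillates across~$e$ for a geometric number of steps of mean $\asymp1/\epsilon_n\asymp n$ before escaping, and at even times it sits at~$x$. The first step is a Borel--Cantelli argument, with~$\{n_k\}$ chosen sufficiently sparse: a.s., for all large~$k$ and every dyadic scale~$R$ in the window $n_k^{1/4}\log n_k\le R\le\tfrac1{10}\sqrt{n_k}$, every annulus $\{|z|\asymp R\}$ contains at least $c\,R^4/n_k$ disjoint scale-$R$ traps. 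Here $R^4/n_k\asymp R^4\,\epsilon_n^{2(2d-1)\gamma}$ is the annulus volume times the per-site trap probability, the lower cutoff is where this count exceeds one, the upper cutoff is set by the time budget~$2n_k$, and the choice $\gamma=\tfrac1{14}$ is precisely what makes the count $\asymp R^4/n_k$ \emph{uniformly} across scales~$R$. This produces $\asymp\tfrac14\log n_k$ usable scales --- ``a whole range of spatial scales'' --- which is where the logarithm will originate.

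For the heat-kernel bound proper, fix one scale-$R$ trap $T=\{x,y\}$ at distance $\asymp R$ of the origin and set $E_T:=\{$the walk has an interval of at least $1.9n$ consecutive steps oscillating inside~$T\}$; note $E_T$ and $E_{T'}$ are incompatible for $T\ne T'$ since two intervals of length $\ge 1.9n$ cannot fit into~$2n$. I would lower-bound $P^0_\omega(E_T,\,X_{2n}=0)$ by the probability of the trajectory that (i) travels from~$0$ to~$x$ along the bulk in the natural diffusive time $\asymp R^2$, avoiding the other traps (possible because their endpoints have density $\asymp1/n$, so on a leg of length $\asymp R^2\le n/50$ the walk meets one only with probability $o(1)$, and because the bulk has density $1-o(1)$), then (ii) oscillates inside~$T$ for the remaining $2n-\sigma_1-\sigma_3$ steps, then (iii) exits~$T$ and returns to~$0$ along the bulk in time $\asymp R^2$. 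Decomposing over the entrance time~$\sigma_1$ and the return length~$\sigma_3$ turns this into a convolution at~$2n$: the Barlow/Delmotte off-diagonal \emph{lower} bounds give $\sum_{\sigma_1\asymp R^2}P^0_\omega(X_{\sigma_1}=x,\text{clean})\gtrsim R^{-(d-2)}=R^{-2}$, likewise $\gtrsim R^{-2}$ for the return leg, while on the relevant range $2n-\sigma_1-\sigma_3\in[2n-4R^2,2n]$ the sojourn factor is $\gtrsim\epsilon_n e^{-c\epsilon_n\cdot 2n}\asymp 1/n$ (the exponential harmless since $\sigma_1,\sigma_3\asymp R^2\ll n$); hence $P^0_\omega(E_T,X_{2n}=0)\gtrsim c/(nR^4)$. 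Summing the disjoint events over all traps and grouping by scale,
\[
\cmss P^{2n}_\omega(0,0)\ \gtrsim\ \sum_{R\ \text{dyadic}}\#\{\text{scale-}R\text{ traps near }0\}\cdot\frac{c}{nR^4}\ \gtrsim\ \sum_{R}\frac{c\,R^4/n}{nR^4}\ =\ \sum_{R}\frac{c}{n^2}\ \gtrsim\ \frac{c\log n}{n^2}
\]
for $n\in\{n_k\}$, and dividing by~$\lambda_n$ absorbs the $\log\log$-size shortening of the scale window, the Borel--Cantelli slack, the implied constants, and the need to handle arbitrarily slowly growing~$\lambda_n$; the resulting $C(\omega)$ is positive a.s.

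The main obstacle --- and the source of the ``technically complicated'' --- is making steps (i) and (iii) rigorous: one needs genuine off-diagonal heat-kernel \emph{lower} bounds for the walk in the \emph{actual} random environment~$\omega$, whose heat kernel can (and, by~\cite{BBHK}, here does) violate the Gaussian \emph{upper} bound, and one must do this without the induction-along-scales alluded to in the introduction. The construction is built precisely to defuse this: $\omega$ agrees with a uniformly elliptic, near-full-density environment except on the designated traps and the rare super-weak edges, so the needed lower bounds are imported from~\cite{Barlow,Delmotte}, and the remaining work is to check that the reach/sojourn/return program only ever runs into isolated (hence harmless) weak edges and that the legs, being fast, dodge the $\asymp1/n$-dense family of the \emph{other} traps. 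A secondary but essential point is the disjointness bookkeeping across scales --- confirming that ``the long sojourn happens at~$T$'' for traps of different scales are incompatible rather than nested events --- so that the per-scale contributions genuinely sum to a logarithm instead of collapsing to a single scale.
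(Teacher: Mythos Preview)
There is a concrete arithmetic gap in your trajectory count. You take the sojourn factor to be $\epsilon_n e^{-c\epsilon_n\cdot 2n}\asymp 1/n$, accounting for the jump \emph{into} the trap and for survival there, but not for getting \emph{out} at a prescribed time. The trap vertices $y,z$ have $\pi_\omega\asymp1$ (dominated by the strong edge $\omega_{yz}\ge\tfrac12$) while every edge leading out has conductance $\asymp1/n$, so exiting at a specified step costs another factor $\asymp1/n$. Equivalently, if you absorb the exit into step~(iii), the return leg starts from a trap vertex rather than a bulk vertex; summed over $\sigma_3\asymp R^2\ll n$ the walk has only probability $\asymp R^2/n$ of having left the trap at all, and the honest contribution is $\asymp1/n$, not $R^{-2}$. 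Either way the correct per-trap bound is
\[
P^0_\omega(E_T,\,X_{2n}=0)\ \asymp\ R^{-2}\cdot\frac{c}{n^2}\cdot R^{-2}\ =\ \frac{c}{n^2R^4},
\]
and with your trap density $\asymp R^4/n$ (forced by $\gamma=\tfrac1{14}$) the sum over scales collapses to $c\log n/n^3$, short by a full factor of~$n$. Your displayed calculation reaches $c\log n/n^2$ only because the missing $1/n$ accidentally cancels the specific choice of~$\gamma$. (Cross-check via reversibility: the per-trap contribution to $\cmss P^{2n}_\omega(0,0)$ is at most a constant times $P^0_\omega(X_n\in T)^2\asymp (R^{-2}/n)^2$.)

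The paper's route sidesteps this entirely. Instead of controlling the round trip through each trap, it applies Cauchy--Schwarz at the \emph{annulus} level, $\cmss P^{2n}_\omega(0,0)\ge c\sum_k P^0_\omega(X_n\in B_k)^2/|B_k|$, so only the one-sided quantity $P^0_\omega(X_n\in B_k)$ needs a lower bound and the return leg is absorbed by reversibility. That one-sided bound is then obtained by a first/second-moment argument for the number of trap neighbours visited by the \emph{coarse-grained} walk~$\hat X$ (the trace of~$X$ on the strong component~$\scrC_{\infty,\alpha}$), and the required off-diagonal heat-kernel lower bound is proved for~$\hat X$, not for~$X$. This is exactly the point your ``import Barlow/Delmotte'' step glosses over: those estimates are for the walk on the percolation graph, whereas the heat kernel of~$X$ in~$\omega$ is the anomalous object under study, so invoking Gaussian bounds for it as an input is the circuitous reasoning the introduction warns against. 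The paper instead establishes the lower bound for~$\hat X$ from scratch via the Fabes--Stroock/Nash argument (their Section~7). Your scheme is in principle repairable --- raise the trap density so that $\rho_n\asymp1/\lambda_n$ along the subsequence and run the path count with the corrected~$1/n^2$ --- but you will still owe the off-diagonal lower bound, and the natural way to pay that debt is precisely the paper's passage to~$\hat X$.
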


The conclusion is that the possibility of anomalous heat-kernel decay in the random conductance model extends to all dimensions $d\geq 4$; yet in $d=4$ the correction to diffusive behavior is only logarithmic. We actually believe, although cannot prove, that similarly to $d\ge5$ the upper bound in \eqref{E:me} can be approached arbitrarily closely but will not be attained in any given environment. We formulate this as a conjecture:

\begin{conjecture}
Assume $d=4$. For every i.i.d. environment $\PP$ with $\PP (0<\omega_b\leq 1)=1$,
\begin{equation}
\label{conj}
\frac{n^2}{\log n}\cmss P_\omega^{2n}(0,0)\xrightarrow[n \to \infty]{}0, \quad \PP\text{\rm-a.s.}
\end{equation}
\end{conjecture}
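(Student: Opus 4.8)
\noindent\emph{Towards a proof.}\ The bound \eqref{conj} sharpens the $d=4$ case of \eqref{E:me} from $O(\log n)$ to $o(\log n)$ for $n^2\cmss P_\omega^{2n}(0,0)$, exactly as \eqref{E:Noam} does in dimensions $d\ge5$; so the plan is to revisit the derivation of the upper bound in \eqref{E:me} and to extract an extra vanishing factor. Starting from the reversibility identity
\begin{equation*}
\cmss P_\omega^{2n}(0,0)=\pi_\omega(0)\sum_{x}\frac{\cmss P_\omega^n(0,x)^2}{\pi_\omega(x)},
\end{equation*}
the argument behind \eqref{E:me} splits the return probability into a diffusive part, of size $O(n^{-2})$, and a part due to excursions into weak-edge traps; organizing the latter by the spatial scale $2^j$ of the trap one is led to an estimate of the form $n^2\cmss P_\omega^{2n}(0,0)\le C_1(\omega)+\sum_{j\colon 2^{2j}\le n}W_j(\omega,n)$, where $C_1<\infty$ $\PP$-a.s., each $W_j\ge0$ has finite expectation, and $W_j\le K(\omega)$ with $K$ a.s.\ finite. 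The number of scales in the sum is $\sim\tfrac12\log_2 n$, which is where the logarithm in \eqref{E:me} comes from, and \eqref{conj} is equivalent to $\frac1{\log n}\sum_j W_j(\omega,n)\to0$ $\PP$-a.s.

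Since the $W_j$ are $O(1)$ and stationary (in the start point), a naive sum over scales cannot beat $O(\log n)$; the improvement must come from the fact that the $W_j$ cannot all be atypically large at once. Concretely, I would fix $\epsilon>0$, call a scale $j\le\tfrac12\log_2 n$ \emph{active at time $n$} if $W_j(\omega,n)>\epsilon$, and try to prove that $\PP$-a.s.\ the number of active scales is $o(\log n)$; the inactive ones then contribute $\le\tfrac\epsilon2\log_2 n$ while the active ones contribute $\le K(\omega)\cdot o(\log n)$, giving $\limsup_n\frac1{\log n}\sum_j W_j\le c\,\epsilon$ and hence \eqref{conj} upon letting $\epsilon\downarrow0$. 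The input for ``$o(\log n)$ active scales'' should be a Borel--Cantelli argument: pushing $W_j$ above $\epsilon$ requires a prescribed, rare pattern of conductances in a $2^j$-neighborhood of the region of $\Z^4$ the walk visits by time $2n$; patterns at well-separated scales involve disjoint bond sets and are independent; and, summed over the $\lesssim n^2$ candidate locations, the probability of such a pattern is small enough that concentration forces the count of active scales to be $o(\log n)$ along the deterministic sequence of times. (Theorem~\ref{Th}, by contrast, exhibits a special environment and a subsequence along which \emph{all} $\sim\log n$ scales are simultaneously active at near-maximal strength; the content of \eqref{conj} is that a fixed environment cannot sustain this along the full sequence.)

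The step I expect to be the real obstacle is controlling $W_j$ itself --- certifying that a trap at scale $2^j$ contributes at most $\sim\epsilon$ to $n^2\cmss P_\omega^{2n}(0,0)$ unless a genuinely rare conductance pattern is present. This seems to demand a \emph{lower} bound on how much the walk has spread by intermediate times $m\le n$, i.e.\ an off-diagonal heat-kernel lower bound of (near-)Gaussian type on the diffusive range, along the lines of $\cmss P_\omega^m(0,x)\ge c_1(m^2\log m)^{-1}\texte^{-c_2\vert x\vert^2/m}$ for $\vert x\vert\le\sqrt m$: only such a bound rules out the walk getting concentrated at an intermediate scale $\ell\ll\sqrt n$ and thereby feeling many scales at once, which would make the logarithm genuine for that $\omega$. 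Establishing this lower bound in the presence of arbitrarily heavy tails at zero appears to require a delicate multiscale induction interleaving heat-kernel upper and lower bounds with a trap-dependent (hence random) space--time rescaling --- in the spirit of Barlow's analysis of the supercritical percolation cluster~\cite{Barlow}, but without the uniform ellipticity --- and it is precisely the lack of such an argument, as already flagged in the discussion preceding Theorem~\ref{Th}, that leaves \eqref{conj} open.
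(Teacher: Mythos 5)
This statement is the paper's open Conjecture, not a proved result: the authors explicitly say that the argument for \eqref{E:Noam} in $d\ge5$ does not carry over, that a new idea is needed, and that the conjecture was only later proved in \cite{UCLA-team}, with the Dominated Ergodic Theorem as the novel input. So there is no proof in this paper to measure your proposal against, and your proposal --- as you yourself acknowledge in its last paragraph --- is a plan rather than a proof. The gap is genuine and sits in two places. First, the decomposition $n^2\cmss P_\omega^{2n}(0,0)\le C_1(\omega)+\sum_j W_j(\omega,n)$ with each $W_j\ge0$ bounded by an a.s.\ finite $K(\omega)$ and of finite expectation is asserted, not derived: the upper bound \eqref{E:me} in \cite{BBHK} is obtained through isoperimetric/evolving-set estimates on a coarse-grained chain, and it is not clear that it splits into scale-indexed random variables with the stated uniform bound and stationarity properties. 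Second, and more seriously, the claim that $\PP$-a.s.\ only $o(\log n)$ scales are ``active'' is essentially the conjecture itself restated; Theorem~\ref{Th} shows that order $\log n$ scales \emph{can} be simultaneously active along a subsequence, so ruling this out along the full sequence for \emph{every} i.i.d.\ law requires a quantitative mechanism, and your Borel--Cantelli sketch never specifies the rare conductance pattern, its probability, or why the union over $\lesssim n^2$ locations and all large $n$ remains summable. You then correctly identify that certifying the smallness of $W_j$ seems to demand off-diagonal heat-kernel lower bounds that are unavailable for arbitrarily heavy tails --- which is exactly where the proposal stops being a proof.

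It is worth noting that the route which eventually succeeded, according to the paper's update, is conceptually different from yours: instead of off-diagonal lower bounds and a multiscale induction in the spirit of \cite{Barlow}, the proof in \cite{UCLA-team} leverages the Dominated Ergodic Theorem to upgrade the $O(\log n)$ bound to $o(\log n)$. This suggests the obstacle you single out (lack of near-Gaussian lower bounds under heavy tails) can be sidestepped entirely, and that an ergodic-theoretic averaging over the environment, rather than a pathwise control of how the walk spreads, is the missing idea.
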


We point out that the proof of the asymptotic \eqref{E:Noam} in $d\ge5$ does not seem to carry over to this case and so presumably a new idea is needed here. \textit{Update in revised version}: The conjecture has in the meantime been proved in an upcoming preprint by Biskup, Louidor, Rozinov and Vandenberg-Rodes~\cite{UCLA-team}. A novel input is the use of the Dominated Ergodic Theorem.

\smallskip
Here is the main idea underlying the proof of Theorem~\ref{Th}. In order to ensure that the chain returns to its starting point at a required time, we can either let it arrive there more or less by accident (the dominating strategy in $d=2,3$) or make it fall into (and hide inside) a specific trap nearby which makes a later return to the starting point considerably less difficult (the dominating strategy in $d\ge5$). However, in $d=4$ the difference between the two strategies is so subtle that we can no longer force which trap the chain falls into; it needs to find one by itself. This puts us in the class of ideas underlying the approach taken by Boukhadra~\cite{B1,B2} to control the $d\ge5$ anomaly in the class of power-law tails.

\begin{remarks}
(1)
Should one desire to have \eqref{E:main} without a random constant, this can be done by making the range of~$k$'s random. 

(2) We did not try to optimize the tails of the distribution of $\omega_b$ for which the anomalous behavior occurs in $d=4$ although we think it is unlikely to occur in the class of power-law tails. On the other hand, our method of proof --- being closer to the approach of~\cite{B1,B2} --- could conceivably be adapted to yield a proof of a sharp threshold in the exponents that would yield anomalous behavior in~$d\ge5$. 

(3)
As in~\cite{BBHK}, we made no attempt to derive off-diagonal estimates on the heat kernel. Thus, in all cases~$d\ge4$, a question remains how one can reconcile the subdiffusive diagonal heat-kernel decay with the standard heat-kernel decay that should resume validity (at least on average) at the diffusive space-time scale (by the CLT).

(4) The proof of the upper bounds \eqref{E:me} in~\cite{BBHK} applies even to non-i.i.d.\ environments that possess a ``strong'' component almost surely. (This can be guaranteed by requiring that the law is dominated from below by an i.i.d.~law in which edges with positive conductances percolate.) On the other hand, it is not hard to construct correlated environments that would make the heat kernel decay arbitrarily slowly (in any~$d\ge1$).
A question remains whether there are some robust (e.g., moment) conditions that would imply a ``standard'' diffusive decay regardless of correlations.

(5) An interesting question is what strategy dominates the event $\{X_{2n}=0=X_0\}$ when anomalous decay occurs. In particular, does the walk visit at most one trap during its run or a sequence of traps (with various ``strengths'' of trapping), etc? \textit{Update in revised version}: This question has also been resolved in the aforementioned preprint~\cite{UCLA-team}; the path does spend a majority of its time in a few very localized places.

(6) It would be interesting to see whether and how the \emph{sub}diffusivity of the random walk among random conductances manifests itself in the behavior of its \emph{loop-erasure}. Here we note that, for instance, in $d=2$ the scaling limit of the loop-erasure of the simple random walk on the supercritical percolation cluster coincides with that for the ordinary simple random walk --- namely SLE$_2$ (Yadin and Yehudayoff~\cite{YY}). The question is thus whether one can extend this remarkable result to other dimensions (obviously, with a different scaling limit) and other conductance laws.
\end{remarks}

The plan of the remainder of this paper is as follows. In the next section we collect the ideas entering the proof and structure the main steps into proper lemmas and propositions. The proof of the main theorem then can be given subject to a Key Lemma (Lemma~\ref{key-lemma}) that controls the number of traps the chain typically sees along its path. The Key Lemma is then subsequently reduced to moment bounds on the corresponding number for a coarse-grained chain; this is done in Section~\ref{sec3}. In Section~\ref{sec4} we then invoke certain technical facts about the heat kernel for the coarse-grained chain, and also the trap density, to justify these moment bounds. These technical facts are then proved in Section~\ref{sec5} (trap density bounds) and Sections~\ref{sec6}-\ref{sec7} (heat-kernel estimates).

\section{{Key steps of the proof}}
\noindent
A distinguished feature of the four-dimensional problem, and the reason why the heat-kernel anomaly is manifested only by logarithmic corrections, is that the leading contribution to return probability may come from a whole range of spatial scales. Anticipating some form of scale invariance, we partition~$\Z^d$ into a sequence of (disjoint) annuli
\begin{equation}
B_k:=\bigl\{ x\in\Z^d\colon 2^{k-1}-1< \vert x\vert_\infty< 2^k\bigr\},\quad k\ge0.
\end{equation}
Let~$|B_k|$ denote the cardinality of~$B_k$.
An opening step of the proof is the following version of a standard (deterministic) Cauchy-Schwarz estimate. 

\begin{lemma}
\label{L:Cauchy-Schwarz}
Suppose that $0<\omega_b\le1$ for all $b$. Then
\begin{equation}
\label{l21}
\cmss P^{2n}_\omega(0,0)\geq \frac{\pi_\omega(0)}{2d} \sum_{k\ge0} \frac{P^0_\omega(X_n\in B_k)^2}{\vert B_k\vert}.
\end{equation}
\end{lemma}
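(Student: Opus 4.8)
The plan is to combine reversibility of the walk with a Cauchy--Schwarz splitting over the annuli $B_k$. First I would write, via the Chapman--Kolmogorov equation,
\begin{equation*}
\cmss P^{2n}_\omega(0,0)=\sum_{x\in\Z^d}\cmss P^n_\omega(0,x)\,\cmss P^n_\omega(x,0).
\end{equation*}
Since the chain is reversible with respect to~$\pi_\omega$ --- the detailed-balance relation $\pi_\omega(x)\cmss P_\omega(x,y)=\omega_{xy}=\omega_{yx}=\pi_\omega(y)\cmss P_\omega(y,x)$ iterates to $n$ steps --- one has $\pi_\omega(x)\cmss P^n_\omega(x,0)=\pi_\omega(0)\cmss P^n_\omega(0,x)$ for every~$x$, and therefore
\begin{equation*}
\cmss P^{2n}_\omega(0,0)=\pi_\omega(0)\sum_{x\in\Z^d}\frac{\cmss P^n_\omega(0,x)^2}{\pi_\omega(x)}.
\end{equation*}

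This is the one place where the hypothesis $\omega_b\le1$ enters: each vertex of~$\Z^d$ has exactly $2d$ neighbors, so $\pi_\omega(x)=\sum_{y\colon(x,y)\in\B^d}\omega_{xy}\le 2d$, whence $1/\pi_\omega(x)\ge 1/(2d)$ and
\begin{equation*}
\cmss P^{2n}_\omega(0,0)\ge\frac{\pi_\omega(0)}{2d}\sum_{x\in\Z^d}\cmss P^n_\omega(0,x)^2.
\end{equation*}
Next I would observe that the annuli $\{B_k\}_{k\ge0}$ form a partition of~$\Z^d$ (indeed $B_0=\{0\}$ and, for $k\ge1$, $B_k=\{x\colon 2^{k-1}\le|x|_\infty\le 2^k-1\}$), so the sum over~$x$ can be reorganized as $\sum_{k\ge0}\sum_{x\in B_k}$. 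For each fixed~$k$, the Cauchy--Schwarz inequality gives
\begin{equation*}
\sum_{x\in B_k}\cmss P^n_\omega(0,x)^2\ge\frac1{|B_k|}\Bigl(\sum_{x\in B_k}\cmss P^n_\omega(0,x)\Bigr)^2=\frac{P^0_\omega(X_n\in B_k)^2}{|B_k|},
\end{equation*}
using $\cmss P^n_\omega(0,x)=P^0_\omega(X_n=x)$ in the last step. Summing over~$k\ge0$ and inserting into the previous display yields~\eqref{l21}.

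The argument is entirely elementary and presents no genuine obstacle; the only points requiring any care are the reversibility identity (which is what lets one replace $\cmss P^n_\omega(x,0)$ by $(\pi_\omega(0)/\pi_\omega(x))\cmss P^n_\omega(0,x)$ and thereby produce a sum of squares) and the trivial verification that the sets $B_k$ exhaust $\Z^d$ without overlap, so that no probability mass is lost when the sum over~$x$ is broken into blocks. It is worth noting that the bound $\pi_\omega(x)\le 2d$ is the only ellipticity-type input used, which is precisely why the estimate survives arbitrarily heavy tails of~$\omega_b$ at zero.
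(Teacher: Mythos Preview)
Your proof is correct and follows essentially the same route as the paper's: Chapman--Kolmogorov plus reversibility to produce a sum of squares, the bound $\pi_\omega(x)\le 2d$, partition into the annuli $B_k$, and Cauchy--Schwarz on each block. The only difference is expository detail.
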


\begin{proof}
By the Markov property and reversibility
\begin{equation}
\begin{alignedat}{2}
\cmss P^{2n}_\omega(0,0)
&=&
\sum_{x\in\Z^d} \cmss P^n_\omega(0,x) \cmss P^n_\omega(x,0)\\
&=&
\sum_{x\in\Z^d} \cmss P^n_\omega(0,x)^2\,\frac{\pi_\omega(0)}{\pi_\omega(x)}. 
\end{alignedat}
\end{equation}
Bounding $\pi_\omega(x)\leq 2d$ and using that $\{B_k\}_{k\geq 1}$
form a partition of $\Z^d$, we get
\begin{equation}
\cmss P^{2n}_\omega(0,0)\geq \frac{\pi_\omega(0)}{2d} \sum_{k\geq0 }\sum_{x\in B_k} \cmss P^n_\omega(0,x)^2.
\end{equation}
By Cauchy-Schwarz, the sum over~$x$ exceeds $\vert B_k\vert^{-1} P^0_\omega(X_n\in B_k)^2$.
\end{proof}

In order to motivate our next step, we recall a classic argument (cf, e.g.,~\cite[Remark~2.2]{BP}) that shows how the CLT implies \eqref{E:1.8}. Indeed, for $k:=\lfloor\frac{1}{2}\log_2(n)\rfloor$ and $n\gg1$ we have that $\text{\rm diam}(B_k)\sim\sqrt n$ and so the \emph{quenched} CLT gives
\begin{equation}
P^0_\omega(X_n\in B_k)\geq C_1(\omega)>0, \qquad \PP\text{\rm-a.s.}
\end{equation}
Retaining only the corresponding term in the sum, from $\vert B_k\vert\le C'n^{d/2}$ we get
\begin{equation}
\cmss P^{2n}_\omega(0,0)\geq \frac{C_2(\omega)}{n^{d/2}}\underset{d=4}= \frac{C_2(\omega)}{n^2},
\end{equation}
where $C_2>0$ a.s. This is the standard diffusive decay. The key idea underlying our work is that, in $d=4$ there are environments for which order-$\log n$ other $k$'s contribute a comparable amount to \eqref{l21} --- thus producing a $\log n$ multiplicative term. 

In order to state the requisite lower bound on $P^0_\omega(X_n\in B_k)$ in dimension-independent form, consider the abbreviation
\begin{equation}
\label{tk}
t_k:=2^{2k},\qquad k\ge0,
\end{equation}
and note that this is the diffusive time scale associated with the spatial scale of~$B_k$. 

\begin{proposition}
\label{pro22}
Let $d\ge4$ and consider an i.i.d.\ law~$\PP$ satisfying $\PP(0<\omega_b\le1)=1$ for which there exists a sequence $n_\ell\to\infty$ such that the quantity
\begin{equation}
\label{rho-n}
\rho_n:=\PP(\omega_b\ge\ffrac12)\,\PP \left(\ffrac{1}{n}\leq \omega_b\leq \ffrac{2}{n} \right)^{4d-2}
\end{equation}
obeys
\begin{equation}
\label{rho-n-decay}
\rho_{n_\ell}\log n_\ell\,\xrightarrow[\ell\to\infty]{}\infty.
\end{equation}
There are random variables $C_1=C_1(\omega)$ and $N_1=N_1(\omega)$, with $C_1(\omega)>0$ and $N_1(\omega)<\infty$ $\PP$-a.s., such that for all $n\in\{n_\ell\}_{\ell\ge1}$ with $n\ge N_1(\omega)$ and all $k\geq 1$ satisfying
\begin{equation}
\label{E:2.24}
\texte^{(\log\log n)^2}\le t_k\le \frac n{\log n}
\end{equation}
we have
\begin{equation}
\label{pro22eq}
P^0_\omega(X_n\in B_k)\geq C_1(\omega) \rho_n \frac{t_k}{n}.
\end{equation}
\end{proposition}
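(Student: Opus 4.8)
The plan is to force the chain into a ``trap'' located inside $B_k$ by hand: we plant, near some vertex $z$ at distance $\sim 2^k$ from the origin, a small cluster of edges whose conductances are of order $1/n$ surrounded (and connected to the origin) by a corridor of edges with conductances of order $1$. The edges of order $1/n$ create a region that the walk enters easily but, once inside, exits only on a time scale of order $n$; this lets the walk sit at scale $2^k$ for a number of steps comparable to $n$, which is exactly what \eqref{pro22eq} quantifies. The exponent $4d-2$ in \eqref{rho-n} should be the number of weak edges one needs to build such a trap (a single vertex together with the $2d$ incident edges, plus a short ``moat'' — the precise combinatorial count is what fixes $4d-2$), and the factor $\PP(\omega_b\ge\tfrac12)$ reflects the requirement that a macroscopic proportion of the edges along the corridor from $0$ to the trap be strong, so that the walk reaches the trap in $o(t_k)$ time with probability bounded below. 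The hypothesis \eqref{rho-n-decay}, $\rho_{n_\ell}\log n_\ell\to\infty$, guarantees that within the ball of radius $2^k$ (which has volume $\asymp t_k^{d/2}$, and we only need $t_k$ up to $n/\log n$, so $\asymp (n/\log n)^{d/2}$ candidate centers) one finds, with overwhelming probability, at least one correctly configured trap in the annulus $B_k$; here the lower cutoff $t_k\ge \texte^{(\log\log n)^2}$ in \eqref{E:2.24} buys enough independent trials to make a Borel–Cantelli argument work simultaneously over all admissible $k$.

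First I would make precise the notion of a \emph{good trap at scale $k$}: a deterministic local configuration, supported in a box of side $O(1)$ (or $O(\log n)$, to accommodate the corridor) centered at a vertex $z\in B_k$, in which a designated vertex $z_\star$ has all incident conductances in $[\tfrac1n,\tfrac2n]$, its neighbors likewise sit behind weak edges, and there is a path of strong ($\ge\tfrac12$) edges joining $z$ to the annulus boundary facing the origin. By independence, the $\PP$-probability that a given well-separated candidate center hosts such a configuration is exactly $\asymp\rho_n$ (this is where \eqref{rho-n} is reverse-engineered). Second, a first/second-moment or direct large-deviation estimate shows that the number of good traps with center in $B_k$ is, with probability $\ge 1 - \exp(-c\,\rho_n\,t_k^{d/2})$, at least one; since $t_k\ge \texte^{(\log\log n)^2}$ and $\rho_n\log n\to\infty$ along $n_\ell$, this failure probability is summable over the $O(\log n)$ relevant values of $k$ and then over $\ell$, giving a random $N_1(\omega)$ beyond which every admissible $k$ has a good trap. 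Third — the analytic core — I would prove the quenched lower bound: conditionally on a good trap at $z\in B_k$ existing, the walk started at $0$ reaches $z$ in at most $\tfrac12 t_k$ steps with probability $\ge c(\omega)>0$ (a down-to-earth application of the quenched CLT / heat-kernel lower bound along the strong corridor, noting $t_k\le n/\log n\ll n$ so the extra corridor length is affordable), and then, by the Markov property, once at $z$ it remains within $B_k$ for the remaining $n - \lfloor t_k/2\rfloor$ steps with probability $\ge c'\, t_k/n$. The latter estimate is the trapping bound: the holding time at the weak vertex $z_\star$ is geometric with mean $\asymp n$ (each step off $z_\star$ has probability $\asymp 1/n$), so the probability of not having escaped the trap region after a further $\asymp n$ steps is bounded below by a constant — more precisely, summing the probability of being at $z_\star$ at time $n$ against the $\asymp 1/n$ normalization yields the factor $t_k/n$ (one power from the volume $|B_k|\asymp t_k^{d/2}$ being irrelevant here, the relevant balance being ``time spent near the trap $\asymp t_k$'' versus ``total time $n$''). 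Multiplying the two probabilities and absorbing constants gives \eqref{pro22eq}; the random constant $C_1(\omega)$ collects the corridor-CLT constant and the (a.s.\ positive) indicator that $0$ itself is not pathologically isolated.

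The main obstacle I expect is Step three's trapping estimate done \emph{quenchedly and uniformly in $k$}: one must control the walk's behavior on a trap whose geometry is fixed but whose \emph{surroundings} are random, and show that the random environment outside the planted corridor/trap does not let the walk escape $B_k$ prematurely or, conversely, get stuck somewhere other than the intended trap. This is presumably handled by comparison: lower-bound $P^0_\omega(X_n\in B_k)$ by the probability of the specific event ``go straight down the strong corridor, then oscillate at $z_\star$'', which only needs the conductances on a deterministic finite set of edges to have the planted values — everything else is bounded away using $\omega_b\le 1$ and crude heat-kernel upper bounds (available since the corridor region is uniformly elliptic). A secondary technical point is verifying that the $O(\log n)$ admissible scales $k$ and the $O(\log n)$-sized corridors can be handled with a single Borel–Cantelli pass; the cutoff $\texte^{(\log\log n)^2}\le t_k$ is precisely calibrated so that $\rho_n t_k^{d/2}$ dominates $\log\log n$, which is what makes the union bound converge. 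Finally one should check the claimed value $4d-2$ of the exponent matches the edge count in the chosen trap construction; if a slightly different local configuration is more convenient, the exponent in \eqref{rho-n} and hence the class of admissible laws $\PP$ would change accordingly, but \eqref{rho-n-decay} is stated to absorb exactly this freedom.
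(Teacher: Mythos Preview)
Your proposal has a genuine gap at the analytic core. You target a \emph{single} trap at a fixed vertex $z\in B_k$ and claim the walk reaches it in $\lesssim t_k$ steps with probability $\ge c(\omega)>0$. This is false in $d\ge3$: the hitting probability of a fixed vertex at distance $\asymp 2^k$ is $\asymp 2^{-k(d-2)}=t_k^{-(d-2)/2}$, not order one. A ``corridor of strong edges'' does not help, because strong edges are not preferred by the walk over the other (also positive) edges at a given vertex; forcing the walk along a prescribed path of length $2^k$ costs $(2d)^{-2^k}$, and building a genuine one-dimensional tube would require $\asymp 2^k$ weak edges, destroying the $\rho_n$ cost. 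Relatedly, your corridor has length $\asymp 2^k$, not $O(\log n)$, since $2^k$ ranges up to $\sqrt{n/\log n}$.

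Because you only use trap \emph{existence}, the factor $\rho_n$ disappears from your final bound. In the paper, $\rho_n$ enters through trap \emph{density}: one shows that the expected number of pairs $(\ell,x)$ with $t_k\le\ell\le 2t_k$, $x\in B_k^\circ$, $X_\ell=x$ and $x$ adjacent to a trap is $\ge C(\omega)\rho_n t_k$ (the Key Lemma). Heuristically, at time $\ell\in[t_k,2t_k]$ the walk is at a given $x\in B_k$ with probability $\asymp t_k^{-d/2}$, there are $\asymp\rho_n|B_k|\asymp\rho_n t_k^{d/2}$ trap-adjacent vertices, and there are $t_k$ times. Each such visit contributes $\asymp 1/n$ to $P_\omega^0(X_n\in B_k)$ via the event ``step into the trap and stay there until time $n$'' --- your trapping bookkeeping is inverted: the $1/n$ is the entry cost (one weak edge), while the survival probability for the remaining $\asymp n$ steps is a constant, not $t_k/n$.

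The step you dismiss as ``down-to-earth'' --- a quenched heat-kernel \emph{lower} bound $\hat{\cmss P}_\omega^\ell(0,x)\gtrsim t_k^{-d/2}$ uniformly over $x\in B_k$ --- is in fact the main technical difficulty, precisely because the environment is not uniformly elliptic (the very traps you want may themselves obstruct mixing). The paper resolves this by passing to a coarse-grained walk on the strong component $\scrC_{\infty,\alpha}$, controlling the time change, and proving first- and second-moment bounds on the trap-visit count for that walk via Barlow-type heat-kernel estimates; this occupies Sections~3--7. The restriction $t_k\ge\texte^{(\log\log n)^2}$ is used there to absorb polylogarithmic errors in the Green's function comparison, not merely for a union bound over $k$.
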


\begin{remark}
Note that the fact that~$\rho_n$ is summable on~$n$ --- which is seen, e.g., from the bound $\rho_n\le\PP(\ffrac{1}{n}\leq \omega_b\leq \ffrac{2}{n})$ --- forces us to work with subsequences in \eqref{rho-n-decay}. On the other hand, the requirement of subpolynomial decay of~$\rho_n$ is convenient, albeit perhaps unnecessary, for our proofs. (Specifically, this assumption is used in Lemmas~\ref{L:density-lower}  and \ref{lemma-sum-upper} and also in the proof of \eqref{E:3.14} from Lemmas~\ref{L:G-upper}--\ref{lemma-sum-upper}.) All of these steps will need to be reevaluated when studying the question for what tails does the anomaly start to occur in~$d=4$. (In $d\ge5$, this question has been addressed by Boukhadra~\cite{B1,B2}.)
\end{remark}

Proposition~\ref{pro22} permits us to finish the proof of our main result. It is important to note that in (and only in) $d=4$ we have that $t_k^2\sim|B_k|$ which puts all terms in the sum in \eqref{l21} on the same order of magnitude. 

\begin{proofsect}{Proof of Theorem~\ref{Th} from Proposition~\ref{pro22}}
Abbreviate $\theta:=1/[2(4d-2)]$ and suppose $d=4$. We may assume without loss of generality that $\lambda_n$ tends to infinity so slowly that
\begin{equation}
\label{E:2.10}
\lambda^{-\ffrac12}_n\log n \xrightarrow[n \to\infty]{}\infty.
\end{equation}
Let $\{n_\ell\}_{\ell\ge1}$ be an increasing sequence of integers with $n_\ell>1$ and such that
\begin{equation}
\sum_{\ell\geq 1}\lambda^{-\theta}_{n_\ell}\le\frac12.
\end{equation}
We then define the environment law~$\PP$ to be an i.i.d.\ measure whose one-dimensional marginals are concentrated on $\{1\}\cup\{n^{-1}_\ell\}_{\ell\ge1}$ with probabilities
\begin{equation}
\PP(\omega_b=n^{-1}_\ell):=\lambda^{-\theta}_{n_\ell}, \qquad \ell\geq1,
\end{equation}
and
\begin{equation}
\PP(\omega_b=1):=1-\sum_{\ell\geq1}\lambda^{-\theta}_{n_\ell}.
\end{equation}
Note that, for this environment,
\begin{equation}
\label{rholb}
\rho_{n_\ell}^2\geq\PP(\omega_b=1)^2\PP(\omega_b=n_\ell^{-1})^{2(4d-2)}
\ge\frac14\lambda^{-1}_{n_\ell}, \qquad \ell\geq1,
\end{equation}
and so \eqref{rho-n-decay} is implied for the subsequence $\{n_\ell\}$ by \eqref{E:2.10} and \eqref{rholb}.

Now pick $n\in \{n_\ell\}_{\ell\geq1}$ with $n\ge N_1(\omega)$, where $N_1$ as in Proposition~\ref{pro22}, and introduce the shorthand $\mathcal Z(n):=\{k\in\N\colon \texte^{(\log\log n)^2}\le t_k\le n/\log(n)\}$. Lemma~\ref{L:Cauchy-Schwarz} and Proposition~\ref{pro22} imply
\begin{eqnarray}
\label{smile}
\cmss P_\omega^{2n}(0,0)
&\geq&
 \frac{\pi_\omega(0)}{8} \sum_{k\in\mathcal Z(n)}
\dfrac{P^0_\omega(X_n\in B_k)^2}{\vert B_k\vert}\nonumber\\
&\geq&
\frac{\pi_\omega(0)}{8}C_1(\omega)^2\Bigl(\frac{\rho_n}{n}\Bigr)^2\,2^{-4}\bigl|\mathcal Z(n)\bigr|,
\end{eqnarray}
where we used that $t_k^2/|B_k|\ge 2^{-4}$.
For $n\gg1$, we have $|\mathcal Z(n)|\ge\frac12\log_4 n\ge\frac14\log n$.
Hence, for $n\in \{n_ \ell\}_{\ell\geq 1}$ sufficiently large,
 \begin{equation}
\cmss P^{2n}_\omega(0,0)\geq\dfrac{\pi_\omega(0)C_1(\omega)^2}{512}\,\dfrac{\log n}{\lambda_n n^2}.
\end{equation}
This and the fact that $\cmss P_\omega^{2n}(0,0)>0$ for all $n\ge1$ imply the claim.
\end{proofsect}

It remains to construct the proof of Proposition~\ref{pro22}. As in the examples showing anomalous decay in $d\geq 5$, a mechanism that could make $P_\omega^0(X_n\in B_k)$ large even when $n\gg t_k$ (which is outside the central-limiting scaling) is to let the walk fall into a \textit{trap}. In analogy with \cite{BBHK,B1,B2}, we adopt the following (somewhat arbitrary) definition:

\begin{definition}
A trap at scale $n$ is an edge $b=(y,z)$ such that $\omega_b\ge\ffrac12$ and such that 
for any edge $b'\neq b$ incident with either $y$ or $z$,
\begin{equation}
\frac{1}{n}\leq \omega_{b'} \leq \dfrac{2}{n}.
\end{equation}
\end{definition}

Let $\AA_n(x)$ be the event on the space of environments that $x$ is a vertex \emph{neighboring} a trap edge at scale $n$. Let us abbreviate
\begin{equation}
B_k^\circ:=\bigl\{x\in\Z^d\colon 2^{k-1}+2< \vert x\vert_\infty< 2^k-3\bigr\}
\end{equation}
and note that $B_k^\circ\subset B_k$ and, in fact, $\text{dist}(B_k^\circ,B_k^\cc)\ge3$. In particular, if $\AA_n(x)$ occurs for $x\in B_k^\circ$, then the corresponding trap(s) and the edges incident therewith all lie in~$B_k$. The effect of trapping is captured by the next estimate:

\begin{lemma}
Let~$d\ge1$.
There is an absolute constant $c_1=c_1(d)>0$ such that for all $n,k\ge1$,
\begin{equation}
\label{prob-expect}
P_\omega^0(X_n\in B_k)\ge\frac{c_1}n\,
E^0_\omega \left(\sum^{n/2-1}_{\ell=0}\1_{\{X_\ell\in B_k^\circ\}}\1_{\AA_n(X_\ell)}\right).
\end{equation}
\end{lemma}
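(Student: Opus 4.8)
The plan is to pass through the continuous-time (constant-speed or variable-speed) random walk, or more simply to argue directly with the discrete chain by conditioning on the first entrance into a trap neighborhood. The heuristic is: if the walk reaches a vertex $X_\ell\in B_k^\circ$ that neighbors a trap edge at scale $n$ at some time $\ell\le n/2-1$, then with probability bounded below by $c/n$ (uniformly in the environment, using $\omega_b\ge\frac12$ on the trap edge and $\omega_{b'}\le 2/n$ on all other incident edges) the walk crosses onto the trap edge and then stays frozen on that edge --- oscillating between its two endpoints --- for the remaining $\ge n/2$ steps, ending in $B_k$ (since the trap and its incident edges lie in $B_k$ when $X_\ell\in B_k^\circ$). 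Summing this contribution over the first such time $\ell$ and over the choice of trap neighbor gives the claimed lower bound.

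First I would fix $k$ and define, on the event $\AA_n(X_\ell)$, a specific trap edge $b=(y,z)$ neighboring $X_\ell$ (say, the lexicographically smallest one), and write $\tau$ for the first time $\ell\in\{0,\dots,n/2-1\}$ with $X_\ell\in B_k^\circ$ and $\AA_n(X_\ell)$. Then
\begin{equation}
P_\omega^0(X_n\in B_k)\ge P_\omega^0(X_n\in B_k,\ \tau<n/2).
\end{equation}
Conditioning on $\FF_\tau$ and using the Markov property at time $\tau$, it suffices to show that from any vertex $x$ neighboring a trap edge $b=(y,z)$, with $x\in\{y,z\}$, the walk satisfies $P_\omega^x(X_m\in\{y,z\}\text{ for all }m\le n/2)\ge c_1/n$. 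This is a direct computation: from $y$ (resp.\ $z$) the chance of moving along $b$ is $\omega_b/\pi_\omega(y)\ge \frac12/\bigl(\frac12+2(2d-1)/n\bigr)\ge \frac{1}{2}\cdot\frac{1}{1+O(1/n)}$, and the chance of leaving $\{y,z\}$ in one step is at most $\sum_{b'\ni y,\,b'\ne b}\omega_{b'}/\pi_\omega(y)\le (2d-1)(2/n)/\frac12 = O(1/n)$; hence the probability of staying on $b$ for all of the $\le n/2$ remaining steps is at least $(1-O(1/n))^{n/2}\ge c_1>0$ — actually this already gives a constant, not a $1/n$, so the $c_1/n$ in \eqref{prob-expect} has room to spare. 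Wait: the $1/n$ must instead come from the one step that moves from $X_\ell$ (a neighbor of the trap) \emph{onto} the trap edge when $X_\ell\notin\{y,z\}$; that single transition costs $\omega_{b'}/\pi_\omega(X_\ell)\ge (1/n)/(2d)=\Omega(1/n)$. Combining, from $X_\ell=x$ neighboring the trap, $P_\omega^x(X_n\in B_k)\ge \frac{1}{2dn}\cdot c_1'\ge c_1/n$.

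The remaining bookkeeping is to convert the "first time" statement into the full sum $\sum_{\ell=0}^{n/2-1}\1_{\{X_\ell\in B_k^\circ\}}\1_{\AA_n(X_\ell)}$ in the expectation on the right of \eqref{prob-expect}. Here I would use the standard trick: since $\{X_n\in B_k\}$ depends only on the future after $\tau$ and the lower bound $c_1/n$ holds from \emph{every} visited trap-neighbor (not just the first), one gets
\begin{equation}
E_\omega^0\Bigl(\sum_{\ell=0}^{n/2-1}\1_{\{X_\ell\in B_k^\circ\}}\1_{\AA_n(X_\ell)}\,\1_{\{X_n\in B_k\}}\Bigr)\le \frac{n}{2}\,P_\omega^0(X_n\in B_k),
\end{equation}
but more usefully, bounding the indicator $\1_{\{X_n\in B_k\}}$ from below by the conditional probability via the Markov property at each $\ell$ and using $P_\omega^{X_\ell}(X_{n-\ell}\in B_k)\ge c_1/n$ on $\{X_\ell\in B_k^\circ\}\cap\AA_n(X_\ell)$ (valid since $n-\ell\ge n/2$ and the trap and incident edges sit inside $B_k$), one obtains
\begin{equation}
P_\omega^0(X_n\in B_k)\ \ge\ E_\omega^0\Bigl(\1_{\{X_\tau\in B_k^\circ\}}\1_{\AA_n(X_\tau)}\,\frac{c_1}{n}\Bigr)\ =\ \frac{c_1}{n}\,P_\omega^0(\tau<n/2),
\end{equation}
and then $P_\omega^0(\tau<n/2)\ge \frac{1}{n/2}E_\omega^0\bigl(\sum_{\ell<n/2}\1_{\{X_\ell\in B_k^\circ\}}\1_{\AA_n(X_\ell)}\bigr)$ is false in general — rather, one should not take a first time at all but instead note $\sum_{\ell<n/2}\1_{\{X_\ell\in B_k^\circ\}}\1_{\AA_n(X_\ell)}\le (n/2)\,\1_{\{\tau<n/2\}}$, which gives exactly $\frac{c_1}{n}P_\omega^0(\tau<n/2)\ge \frac{c_1}{n}\cdot\frac{2}{n}E_\omega^0(\cdots)$, i.e.\ the bound \eqref{prob-expect} with $c_1$ replaced by $2c_1$ — and after relabeling constants this is precisely the claim.

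I expect the only genuine subtlety — and hence the step to write carefully — is the containment claim: that $X_\ell\in B_k^\circ$ forces the trap edge(s) and every edge incident to them to lie in $B_k$, so that the walk frozen on the trap edge for the last $\ge n/2$ steps indeed ends in $B_k$. This is exactly why $B_k^\circ$ was defined with the buffer $\dist(B_k^\circ,B_k^\cc)\ge 3$: a trap neighbor $X_\ell$ is within $\ell_\infty$-distance $1$ of the trap endpoints $y,z$, which are then within distance $2$ of any further incident edge, all staying inside $B_k$. Everything else is a one-step ellipticity estimate and the Markov property, with no induction and no off-diagonal heat-kernel input needed.
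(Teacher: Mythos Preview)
Your final arithmetic is wrong, and the error hides a genuine gap. You compute
\[
P_\omega^0(X_n\in B_k)\ \ge\ \frac{c_1}{n}\,P_\omega^0(\tau<n/2)\ \ge\ \frac{c_1}{n}\cdot\frac{2}{n}\,E_\omega^0\Bigl(\sum_{\ell<n/2}\1_{\{X_\ell\in B_k^\circ\}}\1_{\AA_n(X_\ell)}\Bigr),
\]
and then declare this to be ``\eqref{prob-expect} with $c_1$ replaced by $2c_1$''. But $\frac{c_1}{n}\cdot\frac{2}{n}=\frac{2c_1}{n^2}$, which is off by a full factor of $n$ from the claim $\frac{c_1}{n}E_\omega^0(\cdots)$. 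This is not a cosmetic loss: in the downstream application the expectation is of order $\rho_n t_k$ with $t_k$ ranging up to $n/\log n$, so an extra $1/n$ would reduce the right-hand side of \eqref{prob-expect} to $O(\rho_n/\log n)$ and the whole Key Lemma argument would collapse.

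The loss is structural, not accidental: a first-hitting-time decomposition can only credit the event $\{X_n\in B_k\}$ once per path, whereas the sum in the expectation counts \emph{every} visit to a trap neighbor. The paper's proof avoids this by defining, for each pair $(x,\ell)$, the event
\[
D_n(x,\ell):=\{X_\ell=x\}\cap\{X_{\ell+1}=y\}\cap\bigcap_{m=\ell+1}^n\{X_m\in\{y,z\}\}
\]
(where $(y,z)$ is the designated trap edge adjacent to $x$) and observing that these events are \emph{pairwise disjoint} as $(x,\ell)$ ranges over $B_k^\circ\times\{0,\dots,n/2-1\}$: on $D_n(x,\ell)$ the walk spends more than half its time on a single edge, so both the trap edge and the entry time $\ell$ are uniquely determined by the path. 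Disjointness gives
\[
P_\omega^0(X_n\in B_k)\ \ge\ \sum_{x\in B_k^\circ}\1_{\AA_n(x)}\sum_{\ell=0}^{n/2-1}P_\omega^0\bigl(D_n(x,\ell)\bigr),
\]
and your one-step estimate $P_\omega^0(D_n(x,\ell))\ge (c_1/n)\,P_\omega^0(X_\ell=x)$ then sums directly to $\frac{c_1}{n}E_\omega^0(\sum_\ell\cdots)$ with no loss. Your heuristic and the local trap calculation are both correct; the missing idea is precisely this disjointness observation, which is what lets every term in the sum contribute rather than only the first.
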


\begin{proofsect}{Proof}
For $x\in\Z^d$ such that $\AA_n(x)$ occurs, let $(y,z)$ be the trap edge that makes $\AA_n(x)$ occur. (In the presence of more such edges next to~$x$, we pick the one that is smallest in a fixed complete order on~$\B^d$.) We assume that this edge is labeled so that $x$ and $y$ are neighbors in~$\Z^d$. For $\ell\geq0$, we use $D_n(x,\ell)$ to denote the event
\begin{equation}
\label{Dn-def}
D_n(x,\ell):=\{X_\ell=x\}\cap \{X_{\ell+1}=y\}\cap \bigcap_{m=\ell+1}^n\bigl\{X_m\in\{y,z\}\bigr\}.
\end{equation}
First we note that
\begin{equation}
\label{wit}
\{X_n\in B_k\}\supset\bigcup_{\begin{subarray}{c}
x\in B^\circ_k\\ \AA_n(x)\,\text{occurs}
\end{subarray}}
\bigcup^{n/2-1}_{\ell=0} D_n(x,\ell).
\end{equation}
Indeed, on $D_n(x,\ell)$ (with~$x$ and~$\ell$ in the unions above) the walk at time~$n$ is at one of the endpoints of the trap, which are both in $B_k$ by the restriction $x\in B_k^\circ$.

Next we claim that the unions in \eqref{wit} are disjoint, i.e., $D_n(x,\ell)\cap D_n(x',\ell')=\emptyset$ for any pairs of indices  $(x,\ell)\ne(x',\ell')$ contributing to \eqref{wit}. This is because on $D_n(x,\ell)$, the walk spends more than half of its time crossing a single (trap) edge --- namely, $(y,z)$ in 
\eqref{Dn-def}. This walk must have entered the trap from vertex~$x$ at time~$\ell$ and so if $(x',\ell')$ is distinct from $(x,\ell)$, it cannot belong to $D_n(x',\ell')$. We conclude
\begin{equation}
\label{witbis}
P^0_\omega(X_n\in B_k)\geq \sum_{x\in B^\circ_k}\1_{\AA_n(x)}
 \sum^{n/2-1}_{\ell=0} P^0_\omega\bigl(D_n(x,\ell)\bigr).
\end{equation} 
The Markov property and a simple calculation imply
\begin{equation}
\label{E:2.25}
P^0_\omega\bigl(D_n(x,\ell)\bigr)\geq  P^0_\omega(X_\ell=x) \frac{1}{2 dn}\left(1+\frac{4(2d-1)}{n}\right)^{\ell-n}.
\end{equation}
The last two terms are at most $c_1/n$ for $c_1:=(2d)^{-1}\texte^{-4(2d-1)}$. Once \eqref{E:2.25} is used for all terms in \eqref{witbis}, the sums combine into the desired expectation. 
\end{proofsect}

The proof of Proposition~\ref{pro22} is now reduced to the following Key Lemma:

\begin{lemma}[Key Lemma]
\label{key-lemma}
Let~$d\ge4$. For any i.i.d.\ law $\PP$ satisfying $\PP(0<\omega_b\le1)=1$ and \eqref{rho-n-decay} for a sequence $\{n_\ell\}_{\ell\ge1}$, there are $\PP$-a.s.\ finite and positive random variables $C_2:=C_2(\omega)$ and $N_1=N_1(\omega)$ such that for all $n\in\{n_\ell\}_{\ell\ge1}$ with $n\ge N_1$ and all~$k$ obeying \eqref{E:2.24} we have
\begin{equation}
\label{keyeq}
E^0_\omega \left(\sum^{n/2-1}_{\ell=0}\1_{\{X_\ell\in B_k^\circ\}}\1_{\AA_n(X_\ell)}\right)\geq C_2(\omega)\rho_n t_k.
\end{equation}
\end{lemma}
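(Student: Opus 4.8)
The plan is to estimate the expectation in \eqref{keyeq} from below by throwing away most of the time interval and most of the spatial scales, retaining only a single "good" intermediate scale, and then to decouple the random-walk occupation measure from the trap indicator using the fact that the event $\AA_n(X_\ell)$ depends only on the conductances \emph{incident to} $X_\ell$ and its neighbors, whereas the position $X_\ell$ is governed by the conductances elsewhere. First I would write
\begin{equation}
\label{E:plan1}
E^0_\omega \left(\sum^{n/2-1}_{\ell=0}\1_{\{X_\ell\in B_k^\circ\}}\1_{\AA_n(X_\ell)}\right)
=\sum_{x\in B_k^\circ}\1_{\AA_n(x)}\,E^0_\omega\left(\sum^{n/2-1}_{\ell=0}\1_{\{X_\ell=x\}}\right),
\end{equation}
so the left side is a sum over $x\in B_k^\circ$ of (number of trap-neighbor sites) weighted by (expected occupation time of $x$ up to time $n/2$). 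The strategy is thus to get (a) a lower bound on the occupation time $\sum_{\ell<n/2}P^0_\omega(X_\ell=x)$ for $x$ at scale $2^k$, and (b) a lower bound on the number of $x\in B_k^\circ$ for which $\AA_n(x)$ occurs, and then to combine them — taking care that the two quantities are not independent of each other because both live on the \emph{same} environment $\omega$.

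For part (a): by the restriction \eqref{E:2.24} we have $t_k\le n/\log n$, so $n$ is much larger than the diffusive time $t_k$ for scale $B_k$, and a walk genuinely spends order $t_k$ units of time in the ball of radius $2^k$ on each of its $\sim n/t_k$ "passages" through it — but we only need the first passage. Concretely I expect the paper to appeal to the coarse-grained/heat-kernel machinery promised for Sections~\ref{sec4}--\ref{sec7}: a quenched lower bound of the form $\sum_{x\in B_k^\circ}P^0_\omega(X_\ell=x)\ge c$ for $\ell$ in a window of length $\sim t_k$ around time $\sim t_k$, which integrates to an occupation bound $\sum_{x\in B_k^\circ}\sum_{\ell<n/2}P^0_\omega(X_\ell=x)\ge c\,t_k$. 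For part (b): since $\AA_n(x)$ is an event of positive $\PP$-probability comparable to $\rho_n$ (it requires one edge with $\omega_b\ge\ffrac12$ and its $\le 4d-2$ neighboring edges in $[\ffrac1n,\ffrac2n]$), the expected number of trap-neighbor sites in $B_k^\circ$ is $\sim\rho_n|B_k|\sim\rho_n t_k^2$ in $d=4$, which (dividing by $|B_k|$) is exactly the density $\rho_n$ per site — this is where the trap-density estimates of Section~\ref{sec5} enter, to make this into an a.s.\ statement uniformly over the relevant scales via a union bound over $k$ in the range \eqref{E:2.24} (there are only $\sim\log n$ such $k$, and the subpolynomial decay of $\rho_n$ from \eqref{rho-n-decay} keeps the error terms summable).

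The main obstacle — and the reason the Key Lemma is not trivial — is precisely the \textbf{dependence between the occupation measure and the trap locations}. One cannot simply write $E^0_\omega(\cdots)\approx (\text{density }\rho_n)\times(\text{total occupation }t_k)$ because the conductances that create the traps near $B_k^\circ$ are the small ($\sim 1/n$) ones, and a region dense with such small conductances is a region the walk is \emph{reluctant to visit} — the trap sites are, if anything, anticorrelated with being reached quickly. The resolution I anticipate is a coarse-graining: one shows the walk reaches the \emph{outer boundary} region of $B_k$ (or a slightly inflated annulus) using only the "strong" edges, whose law conditionally on the trap structure is still essentially i.i.d.\ Bernoulli-like and supercritical, so that a quenched heat-kernel lower bound holds for the strong-edge subgraph; then, having arrived near scale $2^k$, the walk finds a trap site with the unconditional density $\rho_n$ because from the vantage point of a typical strong-component site the surrounding environment is "fresh." Making this precise is what requires the heat-kernel lower bounds for the coarse-grained chain (Sections~\ref{sec6}--\ref{sec7}) and the independence-of-scales structure; in the present section the job is to reduce \eqref{keyeq}, via \eqref{E:plan1} and a Chebyshev/Borel--Cantelli argument over the $O(\log n)$ admissible scales $k$, to those two inputs — the scale-$k$ occupation lower bound $\gtrsim t_k$ and the scale-uniform trap-density lower bound $\gtrsim\rho_n$ — whose product is the claimed $C_2(\omega)\rho_n t_k$.
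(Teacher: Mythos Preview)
Your plan captures the first-moment half of the argument correctly: the decomposition \eqref{E:plan1}, the heat-kernel lower bound for the coarse-grained chain on the strong component, and the a.s.\ trap-density lower bound in $B_k^\circ\cap\scrC_{\infty,\alpha}$ are exactly Lemmas~\ref{L:HK-lower} and~\ref{L:density-lower}, and together they yield $E_\omega^0(R_{n,k})\ge C_3(\omega)\rho_n t_k$ for the random variable
\[
R_{n,k}:=\sum_{\ell=t_k}^{2t_k}\1_{\AA_n(\hat X_\ell)}\1_{\{\hat X_\ell\in B_k^\circ\}}
\]
defined in terms of the \emph{coarse-grained} walk~$\hat X$. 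What your proposal does not address is how to convert this back into a lower bound on the sum in \eqref{keyeq}, which is over the steps of the \emph{original} walk~$X$.

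The point is that $R_{n,k}$ only bounds the sum in \eqref{keyeq} from below on the event $\EE_k:=\{\sum_{\ell\le2t_k}T_\ell\le2\beta t_k\}$ that the time change stays under control. One has $P_\omega^0(\EE_k^\cc)\to0$ by ergodicity (Lemma~\ref{L:time-change}), but this alone does not suffice: $R_{n,k}$ is not bounded, and it is \emph{a priori} possible that most of $E_\omega^0(R_{n,k})$ lives on~$\EE_k^\cc$ (the walk spending a long time in a weak component near a trap would make both~$R_{n,k}$ large and~$\EE_k$ fail). The paper closes this gap with a \emph{second moment} bound $E_\omega^0(R_{n,k}^2)\le C_4(\omega)(\rho_n t_k)^2$ (Proposition~\ref{prop-moments}) and a truncation: setting $\FF_{n,k}:=\{R_{n,k}\le M\rho_n t_k\}$, one writes
\[
E_\omega^0(R_{n,k}\1_{\EE_k})\ge E_\omega^0(R_{n,k})-M\rho_n t_k\,P_\omega^0(\EE_k^\cc)-\frac{1}{M\rho_n t_k}E_\omega^0(R_{n,k}^2)
\]
and optimizes over~$M$. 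Establishing the second-moment bound is itself nontrivial and occupies Lemmas~\ref{L:HK-upper}, \ref{L:G-upper} and~\ref{lemma-sum-upper} (heat-kernel and Green's-function \emph{upper} bounds, plus a trap-density \emph{upper} bound). Your sketch mentions only lower bounds on both ingredients; without the matching upper bounds and the truncation device, the passage from~$\hat X$ back to~$X$ has a genuine hole.
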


\begin{proofsect}{Proof of Proposition~\ref{pro22} from Key Lemma}
The conditions on $k$ and $n$ are identical, and combining \eqref{keyeq} with \eqref{prob-expect} we get \eqref{pro22eq} with $C_1(\omega):=c_1C_2(\omega)$.
\end{proofsect}

\section{Proof of Key Lemma}
\label{sec3}\noindent
Our proof of the Key Lemma will require introduction of some technical tools that we will first try to motivate by giving a heuristic argument why \eqref{keyeq} should hold true. 

Recall the notation~$t_k$ from \eqref{tk}. By reducing the sum in \eqref{keyeq} to $t_k\le\ell\le 2t_k$ --- which is allowed because $t_k\ll n$ by the assumptions \eqref{E:2.24} --- the expectation in \eqref{keyeq} pertains to paths of the random walk on temporal scale $t_k$ and spatial scale~$\sqrt{t_k}$. This is a diffusive scaling so one might expect that the law of~$X_\ell$ will be already close to the stationary distribution, and thus more or less uniformly distributed, over~$B_k^\circ$. The expectation of each term in the (reduced) sum should therefore be bounded below by a constant times $\PP(\AA_n(0))$. As
\begin{equation}
\PP(\AA_n(0))\ge\rho _n,
\end{equation}
and as there are order~$t_k$ terms in the (reduced) sum, this would yield \eqref{keyeq}.

A fundamental problem with this reasoning is that, due to the presence of very weak bonds, the law of $X_\ell$ in~$B^\circ_k$ for $t_k\leq\ell\leq2t_k$ will \emph{not} be close to the stationary distribution at the required level. After all, the sole purpose of this note is to demonstrate the failure of a local-CLT scaling! As in \cite{BP,BBHK,Mathieu-CLT,B1,B2}, we will circumvent this problem by observing the walk only on a \textit{strong component}; namely, the connected component of edges~$b$ with $\omega_b\geq \alpha$ for some small enough $\alpha$ to be chosen momentarily. This walk already has good mixing properties but, unfortunately, the reduction of the expectation in \eqref{keyeq} to this walk involves a time change that will now need to be controlled as well. And as this happens on the background of an expectation of a (large) random variable, we will have to control moments of this random variable as well.

We now begin to formulate the aforementioned technical aspects precisely. Following up on earlier work \cite{BP,BBHK,B1,B2}, we will introduce a cutoff $\alpha$ and examine the connectivity properties of the graph~$\G_{\omega}$ with vertices~$\Z^d$ and edges $\{b\colon\omega_b\ge\alpha\}$. The key facts we will need are as follows:

\begin{proposition}
\label{P:perc}
Assume~$d\ge2$. Then there is~$p_0=p_0(d)\in(0,1)$ such that whenever $\PP(\omega_b\ge\alpha)\ge p_0$, then the following holds $\PP$-a.s.:
\begin{enumerate}
\item[(1)]
The graph $\G_\omega$ contains a unique infinite connected component $\scrC_{\infty,\alpha}=\scrC_{\infty,\alpha}(\omega)$.
\item[(2)]
The complement $\Z^d\setminus\scrC_{\infty,\alpha}$ has only finite connected components.
\end{enumerate}
If $\scrF_x$ denotes the connected component of~$\Z^d\setminus\scrC_{\infty,\alpha}$ containing~$x$ (with $\scrF_x=\emptyset$ for~$x\in\scrC_{\infty,\alpha}$) and $\dist_\omega(x,y)$ is the shortest-path distance measured on~$\scrC_{\infty,\alpha}$ then also:
\begin{enumerate}
\item[(3)]
Almost surely on~$\{0\in\scrC_{\infty,\alpha}\}$, 
\begin{equation}
\label{E:distance-comp}
\limsup_{|x|\to\infty}\frac{\dist_\omega(0,x)}{|x|}<\infty.
\end{equation}
\item[(4)]
If~$\diam_\omega(\scrF_x)$ denotes the maximum of $\dist_\omega(y,z)$ over all pairs of $\Z^d$-neighbors $y,z\in\scrC_{\infty,\alpha}$ of~$\scrF_x$, then $\diam_\omega(\scrF_0)$ has all moments. (Naturally, $\diam_\omega(\emptyset)=0$.)
\end{enumerate}
Finally, let~$\scrG_x$ denote the union of~$\scrF_y$ for~$y$ running through neighbors of~$x$ in~$\Z^d$. Let~$\G_\omega'$ denote the graph obtained from~$\scrC_{\infty,\alpha}$ by adding an edge between any~$y,z\in\scrC_{\infty,\alpha}$ with $\scrG_y\cap\scrG_z\ne\emptyset$ and let $\textd'_\omega(x,y)$ denote the graph-theoretical distance measured on~$\G_\omega'$. Then:
\begin{enumerate}
\item[(5)]
For some~$\xi>0$,
\begin{equation}
\limsup_{|x|\to\infty}\frac1{|x|}\log\PP\Bigl(\,0,x\in\scrC_{\infty,\alpha}\,\,\&\,\,\textd'_\omega(0,x)\le\xi|x| \Bigr)<0.
\end{equation}
\end{enumerate}
Here and henceforth, $|x|$ denotes the Euclidean norm of~$x$.
\end{proposition}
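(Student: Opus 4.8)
The plan is to reduce everything to standard facts about supercritical Bernoulli bond percolation. Since~$\PP$ is i.i.d., the edge set $\{b\colon\omega_b\ge\alpha\}$ is, under~$\PP$, exactly a Bernoulli bond percolation configuration on~$\Z^d$ with parameter $p:=\PP(\omega_b\ge\alpha)$ and independent edges; we choose the threshold $p_0\in(p_\cc(d),1)$ large enough that the quantitative inputs below apply. Items~(1) and~(2) are then classical. Uniqueness of the infinite open cluster~$\scrC_{\infty,\alpha}$ is Burton--Keane. For~(2), note that every edge on the edge-boundary of a connected component~$\scrF$ of $\Z^d\setminus\scrC_{\infty,\alpha}$ must be closed --- otherwise its endpoint inside~$\scrF$ would be joined to~$\scrC_{\infty,\alpha}$, contradicting maximality of~$\scrF$. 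When $p$ is close to one the closed edges form a strongly subcritical percolation, and a component of $\Z^d\setminus\scrC_{\infty,\alpha}$ of diameter~$L$ is surrounded by a connected ``wall'' of closed edges of size at least of order~$L$; this has probability at most $\texte^{-cL}$, so Borel--Cantelli rules out infinite components and, as a byproduct, gives an exponential tail for the Euclidean diameter $D_0:=\diam_{\mathrm{eucl}}(\scrF_0)$. Item~(3) is precisely the Antal--Pisztora chemical-distance theorem: for $p>p_\cc(d)$ there is $\rho<\infty$ with $\dist_\omega(0,x)\le\rho|x|$ for all large~$|x|$, $\PP$-a.s.\ on $\{0\in\scrC_{\infty,\alpha}\}$, which is~\eqref{E:distance-comp}.

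For item~(4) I would combine the exponential tail on~$D_0$ with a scale-uniform form of the chemical-distance bound obtained by renormalization. Fix a large side~$N$ and call an $N$-box \emph{good} if a standard local connectivity condition holds in it and its neighbors (a unique large sub-cluster of~$\scrC_{\infty,\alpha}$ touching all faces, as in the Antal--Pisztora / Penrose--Pisztora constructions); for $p$ close to one good boxes stochastically dominate a site percolation of density as close to one as desired, and in any connected all-good region two vertices of~$\scrC_{\infty,\alpha}$ are joined inside~$\scrC_{\infty,\alpha}$ by a path of length comparable to their $\Z^d$-distance. A pair of $\Z^d$-neighbors $y,z\in\scrC_{\infty,\alpha}$ of the same hole~$\scrF_0$ has $|y-z|\le D_0+2$, so
\begin{equation}
\dist_\omega(y,z)\le C\,(D_0+2)+N\,M_0,
\end{equation}
where~$M_0$ counts the bad $N$-boxes within Euclidean distance $C'D_0$ of the origin. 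Given~$D_0$, the variable~$M_0$ has an exponential tail by subcriticality of the bad boxes, and~$D_0$ itself has an exponential tail by~(2); hence $\diam_\omega(\scrF_0)$ has moments of all orders. (A mild subtlety --- here and wherever ``being in the infinite cluster'' must be localized --- is handled by the usual passage to the renormalized good/bad boxes, which makes all the events in question local; this is by now routine, cf.~\cite{BB,BP,Mathieu-CLT}.)

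Item~(5) is the substantive one. The idea is to assign to each edge~$e$ of~$\G_\omega'$ a \emph{length}: $\ell(e):=1$ if~$e\in\B^d$, and $\ell(e):=1+|y-z|$ if~$e$ is an added edge joining $y,z\in\scrC_{\infty,\alpha}$ with $\scrG_y\cap\scrG_z\ne\emptyset$. In the latter case a \emph{single} hole~$\scrF$ touches a neighbor of~$y$ and a neighbor of~$z$ (a vertex lies in at most one hole), so $|y-z|\le\diam_{\mathrm{eucl}}(\scrF)+4$; by translation invariance and the diameter tail from~(2), $\ell(e)$ then has an exponential tail \emph{uniform} in~$e$, and the length of an added edge at~$y$ depends only on the conductances within Euclidean distance $O(\ell(e))$ of~$y$. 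Now for any self-avoiding path $\gamma=(0=v_0,v_1,\dots,v_m=x)$ in~$\G_\omega'$ we have $|x|\le\sum_{i=1}^m|v_{i-1}-v_i|\le\sum_{i=1}^m\ell(e_i)$, so if $\textd'_\omega(0,x)=m\le\xi|x|$ then $\sum_{i=1}^m\ell(e_i)\ge|x|\ge m/\xi$: the lengths along~$\gamma$ average at least~$1/\xi$. There are at most $(2d)^m$ self-avoiding $\G_\omega'$-paths of length~$m$ from~$0$, so by a union bound over $m\le\xi|x|$ and over such paths it suffices to show that for a fixed path the probability that its lengths sum to at least~$|x|$ is at most $\texte^{-c(p)|x|}$ with $c(p)$ that can be made as large as we like by pushing $p$ toward one; taking~$\xi$ small then beats the entropy factor $(2d)^{\xi|x|}$ and yields the desired negative $\limsup$.

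The main obstacle is exactly this last probability estimate, because the lengths of the several added edges emanating from one large hole are strongly correlated. I would defeat this by coarse-graining: tile~$\Z^d$ into boxes of a large fixed side~$N$ and call a box \emph{heavy} if some hole meeting it has diameter exceeding~$N$ (again a local event after the good/bad reduction). For $p$ close to one heavy boxes are strongly subcritical, and every added edge of length $\ge 2N$ has an endpoint inside a connected cluster of heavy boxes of diameter comparable to its length. Thus a path whose edge-lengths sum to $\ge|x|$ while using at most $\xi|x|$ edges must either traverse clusters of heavy boxes of total size of order~$|x|$ --- exponentially unlikely by a greedy-lattice-animal bound in subcritical percolation --- or else accumulate its length from $\Omega(|x|)$ edges each of length $O(N)$ supported on Euclidean-separated regions, to which (after splitting those edges into finitely many classes indexed by residues of their positions, to restore independence) an ordinary Chernoff bound applies. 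Either way $\PP\bigl(\sum_i\ell(e_i)\ge|x|\bigr)\le\texte^{-c(p)|x|}$ with $c(p)$ large for $p$ large. The bookkeeping that turns this sketch into a theorem --- in particular making the dependent edge-lengths amenable to a Chernoff/lattice-animal estimate --- is where the real work lies; items~(1)--(4), by contrast, are assembled from Burton--Keane, Antal--Pisztora and the standard high-density renormalization also used in~\cite{BB,BP,Mathieu-CLT}.
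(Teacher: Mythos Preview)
Your treatment of items (1)--(4) is essentially the paper's own: the paper also reduces to Bernoulli bond percolation, invokes Burton--Keane for uniqueness, cites Antal--Pisztora~\cite{Antal-Pisztora} for~(3), and deduces~(2) and~(4) from a coarse-graining that makes the complementary components exponentially small (the paper uses the Liggett--Schonmann--Stacey domination on unit cubes rather than your good/bad $N$-boxes, but this is cosmetic). So far so good.

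For item~(5), however, there is a concrete gap. The paper does not prove~(5) at all; it simply observes that it is a restatement of Lemma~3.1 of Biskup--Prescott~\cite{BP}. Your attempt to prove it directly founders on the sentence ``There are at most $(2d)^m$ self-avoiding $\G_\omega'$-paths of length~$m$ from~$0$.'' This is false: the graph~$\G_\omega'$ has \emph{unbounded} degree. A vertex~$y\in\scrC_{\infty,\alpha}$ adjacent to a hole~$\scrF$ of Euclidean diameter~$L$ is joined in~$\G_\omega'$ to every $z\in\scrC_{\infty,\alpha}$ with a $\Z^d$-neighbor in~$\scrF$, and there are order~$L^{d-1}$ such~$z$. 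So the entropy factor in your union bound is not $(2d)^{\xi|x|}$ but something that itself depends on the hole geometry along the path, and the competition between entropy and the probability term $\texte^{-c(p)|x|}$ cannot be set up as you wrote it.

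Your final coarse-graining paragraph gestures toward handling the \emph{probability} side of this (large edge-lengths come from heavy-box clusters), but it does not repair the \emph{counting} side: after coarse-graining you would still need to enumerate the $\G_\omega'$-paths, and their number is governed by the very hole sizes you are trying to control. The actual proof in~\cite{BP} avoids this trap by working not with $\G_\omega'$-paths directly but with nearest-neighbor $\Z^d$-paths (which \emph{do} have entropy $(2d)^m$) and then showing that any short $\G_\omega'$-path forces a nearest-neighbor path that visits many sites with anomalously large~$\scrG$-components; the large-deviation estimate is then run on that lattice path. If you want to flesh out~(5) rather than cite it, that is the mechanism you need.
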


\begin{proofsect}{Proof (Sketch)}
First note that all properties (2-5) are properly stochastically monotone in~$\alpha$. (An exception is the uniqueness of~$\scrC_{\infty,\alpha}$ which holds for all~$\alpha$ by an argument of, e.g., Burton and Keane~\cite{Burton-Keane}.) Our proof is best explained by running a coarse-graining argument: Consider bond percolation on~$\Z^d$ with parameter~$p$ and call a unit cube of~$2^d$ vertices in~$\Z^d$ \emph{occupied} if all of its edges are occupied. Two cubes are called \emph{adjacent} if they share a side. By the result of  Liggett, Stacey and Schonmann~\cite{LSS}, the fact that the cubes more than distance one apart are independent permits us dominate the process of occupied cubes from below by site percolation on~$\Z^d$ with a parameter~$\eta(p)$, where~$\eta(p)\uparrow1$ when~$p\uparrow1$. In particular, there is $p_0\in(0,1)$ such that for all~$p\ge p_0$, the occupied cubes percolate and the removal of the (a.s.\ unique) infinite component of occupied cubes results only in finite components whose  diameters have an exponential~tail. 

Properties (1,2) now follow immediately by standard facts about percolation on~$\Z^d$ while (4) is the consequence of the fact that $\diam_\omega(\scrF_x)$ will be bounded by the number of unit cubes adjacent to the finite component of the cube-process (necessarily) containing~$\scrF_x$. Property~(3) is a consequence of Theorem~1.1 of~\cite{Antal-Pisztora} while property~(5) is a restatement of Lemma~3.1 of~\cite{BP}.
\end{proofsect}

Now let us fix~$p_0$ as in Proposition~\ref{P:perc} and pick~$\alpha_0\in(0,1)$ by
\begin{equation}
\PP(\omega_b\ge\alpha_0)\ge p_0.
\end{equation}
We will keep~$\alpha_0$ fixed throughout the rest of the paper. Note that the properties (1-5) in Proposition~\ref{P:perc} apply to all cutoffs $\alpha\in(0,\alpha_0]$.

Consider now a path of the Markov chain~$X$. For any~$\omega$ with $0\in\scrC_{\infty,\alpha}$, we define a sequence $T_0:=0, T_1, T_2, \ldots$ via
\begin{equation}
\label{E:Ts}
T_{j+1}:=\inf\{\ell>T_0+\cdots+ T_j~: X_\ell\in \scrC_{\infty,\alpha}\}-(T_0+\cdots+T_j)
\end{equation}
and
\begin{equation}
\hat{X}_\ell:=X_{T_1+\cdots+T_\ell}, \qquad \ell\geq0.
\end{equation}
The sequence $(\hat{X}_\ell)_{\ell\geq1}$ records the successive visits of $(X_n)$ to the strong component $\scrC_{\infty,\alpha}$. Note that we have $T_j<\infty$ for all~$j\ge0$, $P^0_\omega$-a.s. In fact, there is a (deterministic) moment bound on the time the walk can ``hide'' in a component of~$\Z^d\setminus\scrC_{\infty,\alpha}$:

\begin{lemma}[Hidding time estimate]
\label{L:hiding-time}
For~$x\in\Z^d$, let~$\scrG_x=\scrG_x(\omega)$ be as in Proposition~\ref{P:perc}. Set~$c_2:=4d\alpha^{-1}$. Then for all~$\omega\in(0,1]^{\B^d}$,
\begin{equation}
E^x_\omega(T_1)\leq c_2\vert \scrG_x\vert.
\end{equation}
\end{lemma}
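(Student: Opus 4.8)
The plan is to exploit the reversibility of the walk $X$ restricted to a finite connected component of $\Z^d\setminus\scrC_{\infty,\alpha}$ together with the elementary fact that, from any vertex $x$, the expected time to exit a finite set via a reversible chain is controlled by a ratio of conductances (the ``commute-time''/Green-function bound). First I would fix $x\in\Z^d$; if $x\in\scrC_{\infty,\alpha}$ then $T_1=0$ and there is nothing to prove, so assume $x\notin\scrC_{\infty,\alpha}$, i.e. $x$ lies in some finite component $\scrF$ of $\Z^d\setminus\scrC_{\infty,\alpha}$. Starting from $x$, the chain $X$ stays inside $\scrF$ until the step at which it first lands in $\scrC_{\infty,\alpha}$; call this time $\tau$. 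By definition $T_1=\tau$ under $P^x_\omega$, so we must bound $E^x_\omega(\tau)$.

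Next I would write $E^x_\omega(\tau)=\sum_{y\in\scrF} G_\scrF(x,y)$, where $G_\scrF(x,y)=\sum_{\ell\ge0}P^x_\omega(X_\ell=y,\,\tau>\ell)$ is the Green function of the walk killed on leaving $\scrF$. The standard reversibility identity gives $G_\scrF(x,y)\,\pi_\omega(x)=G_\scrF(y,x)\,\pi_\omega(y)$ is not quite what is needed; instead the clean bound is $G_\scrF(x,y)\le G_\scrF(x,x)\cdot\bigl(\pi_\omega(y)/\pi_\omega(x)\bigr)$ is also not optimal here. The cleanest route is: for the killed chain, $G_\scrF(x,y)\le G_\scrF(y,y)$ and, by a one-step analysis at $y$, $G_\scrF(y,y)\le \pi_\omega(y)/\omega(\partial\scrF\text{-edge at }y)$ whenever $y$ is adjacent to $\scrC_{\infty,\alpha}$; but a uniform and sufficient estimate is simply $G_\scrF(x,y)\le 1/q_y$, where $q_y:=P^y_\omega(\text{leave }\scrF\text{ in one step})$ bounds below the escape probability from $y$ — in fact it is cleaner to bound $E^x_\omega(\tau)$ directly by $|\scrF|\cdot\max_{y}E^y_\omega(\text{exit }\scrF)$ and then use that from any $y$ the walk exits in one step with probability at least $\alpha/(2d)$ (since $\pi_\omega(y)\le 2d$ and at least one incident edge — the one leading out of $\scrF$ to the adjacent strong vertex, which has conductance $\ge\alpha$ — carries conductance at least $\alpha$). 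Geometric-trial reasoning then gives $E^y_\omega(\text{exit }\scrF)\le 2d/\alpha$.

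Putting this together: $E^x_\omega(T_1)=E^x_\omega(\tau)\le |\scrF|\cdot\frac{2d}{\alpha}$. Finally I would identify the right set: the chain started at $x\in\scrF$ can only visit $\scrF$ together with those vertices of $\scrF$ that are $\Z^d$-neighbors of $\scrC_{\infty,\alpha}$; and by the definition of $\scrG_x$ in Proposition~\ref{P:perc} (the union of $\scrF_y$ over $\Z^d$-neighbors $y$ of $x$), one has $\scrF\subseteq\scrG_x$ whenever $x\in\scrF$, since $x$ is a neighbor of itself — more carefully, $x$ has a neighbor in $\scrF$ (or $x$ itself works after unwinding the indexing), so $\scrF_x\subseteq\scrG_x$ and $\scrF=\scrF_x$. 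Hence $|\scrF|\le|\scrG_x|$, giving $E^x_\omega(T_1)\le \frac{2d}{\alpha}|\scrG_x|\le \frac{4d}{\alpha}|\scrG_x|=c_2|\scrG_x|$, with room to spare (the extra factor of $2$ absorbs the possibility of one additional step to reach $\scrC_{\infty,\alpha}$ from a boundary vertex, or any slack in the one-step escape bound).

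The only mildly delicate point — the ``main obstacle'' — is the bookkeeping in the last paragraph: making sure the set the killed chain lives on is genuinely contained in $\scrG_x$ as defined (the definition quantifies over neighbors of $x$, so one must be slightly careful when $x$ itself is the relevant boundary vertex versus an interior vertex of $\scrF$), and checking that from every vertex $y\in\scrF$ there really is an incident edge of conductance $\ge\alpha$ leading out — this uses that $\scrF$ is a \emph{finite} component of $\Z^d\setminus\scrC_{\infty,\alpha}$, so its outer boundary meets $\scrC_{\infty,\alpha}$, but one needs the one-step escape bound \emph{uniformly over all} $y\in\scrF$, which is why it is cleanest to bound the expected exit time from each $y$ by first hitting the (nonempty) inner boundary of $\scrF$ and only then escaping; alternatively, absorb this into a crude factor-$|\scrF|$ bound as above. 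Everything else is a routine reversibility/geometric-series computation.
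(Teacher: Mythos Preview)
There is a genuine gap. First, by \eqref{E:Ts} one has $T_1\ge1$ always, so your dismissal of the case $x\in\scrC_{\infty,\alpha}$ as ``$T_1=0$'' is wrong --- and that case is precisely the one invoked in Lemma~\ref{L:time-change}. More seriously, your one-step escape bound is false: you claim every $y\in\scrF$ has an incident edge of conductance $\ge\alpha$ leading to $\scrC_{\infty,\alpha}$, but by the very definition of $\scrC_{\infty,\alpha}$ as the infinite component of $\{b:\omega_b\ge\alpha\}$, any edge from a vertex $y\notin\scrC_{\infty,\alpha}$ to a neighbor $z\in\scrC_{\infty,\alpha}$ must satisfy $\omega_{yz}<\alpha$ (otherwise $y$ would itself lie in~$\scrC_{\infty,\alpha}$). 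Thus \emph{every} boundary edge of $\scrF$ is weak, interior vertices of~$\scrF$ need not touch $\scrC_{\infty,\alpha}$ at all, and your geometric-trial estimate $E^y_\omega(\text{exit}\,\scrF)\le2d/\alpha$ has no basis.

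The argument in~\cite{BBHK} that the paper cites rests on a cancellation you have not used. From $x\in\scrC_{\infty,\alpha}$, condition on the first step $X_1=y\in\scrF_y\subseteq\scrG_x$. Reversibility of the killed chain gives $G_{\scrF_y}(y,z)=\pi_\omega(z)\pi_\omega(y)^{-1}G_{\scrF_y}(z,y)\le\pi_\omega(z)\pi_\omega(y)^{-1}G_{\scrF_y}(y,y)$, and $G_{\scrF_y}(y,y)\le\pi_\omega(y)/\omega_{xy}$ because from $y$ one escapes to $x\in\scrC_{\infty,\alpha}$ in one step with probability $\omega_{xy}/\pi_\omega(y)$. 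Summing over $z\in\scrF_y$ yields $E^y_\omega(\sigma)\le 2d\,|\scrF_y|/\omega_{xy}$. Multiplying by the transition probability $\cmss P_\omega(x,y)=\omega_{xy}/\pi_\omega(x)$ cancels the dangerous $\omega_{xy}^{-1}$, and $\pi_\omega(x)\ge\alpha$ finishes the bound. The same weak edge that makes the hole hard to leave also makes it proportionally hard to enter; this offset is the whole point.
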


\begin{proofsect}{Proof}
This is a restatement of Lemma~3.8 from \cite{BBHK}.
\end{proofsect}

A fundamental concept in the study of random walks in random environments is the ``point of view of the particle.'' The idea is that instead of recording the position of the walk relative to a given environment, we follow the sequence of environments that the walker sees along its path. Explicitly, let~$\tau_x$ denote the ``shift by~$x$'' which is formally defined by
\begin{equation}
(\tau_x\omega)_{yz}:=\omega_{y+x,z+x},\qquad x\in\Z^d,\,(y,z)\in\B^d.
\end{equation}
Given a trajectory $\hat X=(\hat X_n)_{n\ge0}$ of the coarse-grained random walk in environment~$\omega$ with $0\in\scrC_{\infty,\alpha}(\omega)$, the sequence $(\tau_{\hat X_n}\omega)_{n\ge0}$ is itself a Markov chain on the space of environments with stationary measure
\begin{equation}
\Q_\alpha(-):=\Q(-|0\in\scrC_{\infty,\alpha}),
\end{equation}
where
\begin{equation}
\Q(\textd\omega):=\frac{\pi_\omega(0)}{Z}\PP(\textd\omega)\quad\text{for}\quad Z:=\E\pi_\omega(0).
\end{equation}
Furthermore, since~$\PP$ is ergodic with respect to $(\tau_x)_{x\in\Z^d}$, abstract considerations (cf~\cite[Section~3]{BB}) imply that~$\Q_\alpha$ is ergodic with respect to the Markov shift $\omega\mapsto\tau_{\hat X_1}\omega$, where~$\hat X_1$ is sampled from~$P_\omega^0$ and~$\omega$ from~$\Q_\alpha$. Introduce also the shorthand
\begin{equation}
\PP_\alpha(-):=\PP(-|0\in\scrC_{\infty,\alpha})
\end{equation}
and note that~$\Q_\alpha\sim\PP_\alpha$ for all~$\alpha\in(0,\alpha_0]$.
A direct consequence of these constructions and Lemma~\ref{L:hiding-time} is that the time scales of the walk~$X$ and the walk~$\hat X$ are commensurate:

\begin{lemma}
\label{L:time-change} 
For each $\alpha\in(0,\alpha_0]$ there is $\beta=\beta(d,\alpha)\in(0,\infty)$ such that for $\PP_\alpha$-a.e.~$\omega$,
\begin{equation}
\label{E:3.8}
P^0_\omega \biggl(\,\sum^{n}_{\ell=1} T_\ell>\beta n \biggr)\xrightarrow[n\to\infty]{}0.
\end{equation} 
\end{lemma}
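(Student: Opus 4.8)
The plan is to deduce \eqref{E:3.8} from the law of large numbers for the Markov chain ``environment viewed from the particle'' applied to the additive functional $\sum_{\ell=1}^n T_\ell$. The essential observation is that the increment $T_\ell$ is, in law under $P_\omega^0$, the hiding time started from the environment $\tau_{\hat X_{\ell-1}}\omega$ that the coarse-grained walk sees just before its $\ell$-th step; more precisely, by the Markov property and \eqref{E:Ts}, $E_\omega^0(T_\ell \mid \hat X_0,\dots,\hat X_{\ell-1}) = E^{\hat X_{\ell-1}}_\omega(T_1) = E^0_{\tau_{\hat X_{\ell-1}}\omega}(T_1) =: \varphi(\tau_{\hat X_{\ell-1}}\omega)$, where $\varphi(\omega):=E^0_\omega(T_1)$. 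By the Hiding-time estimate (Lemma~\ref{L:hiding-time}), $0\le\varphi(\omega)\le c_2|\scrG_0(\omega)|$, and by property~(4) of Proposition~\ref{P:perc} together with the fact that $\scrG_0$ is a union of finitely many translates of sets of the form $\scrF_y$ (hence controlled by $\diam_\omega$ of the corresponding finite components), $|\scrG_0|$ has all moments under $\PP$, and therefore under $\Q_\alpha$ (which is equivalent to $\PP_\alpha$ with a bounded density $\pi_\omega(0)/Z\le 2d/Z$). In particular $\varphi\in L^1(\Q_\alpha)$.

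The next step is to invoke the ergodicity of $\Q_\alpha$ under the Markov shift $\omega\mapsto\tau_{\hat X_1}\omega$, as recorded in the excerpt (following \cite[Section~3]{BB}). The Birkhoff ergodic theorem for this ergodic Markov chain gives, for $\Q_\alpha$-a.e.\ starting environment $\omega$ and $P^0_\omega$-a.s.,
\begin{equation}
\label{E:lln-phi}
\frac1n\sum_{\ell=1}^n \varphi(\tau_{\hat X_{\ell-1}}\omega)\xrightarrow[n\to\infty]{} \E_{\Q_\alpha}(\varphi)=:\beta_0<\infty .
\end{equation}
This controls the conditional means of the $T_\ell$'s. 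To pass from the conditional means to the actual sum $\sum_{\ell=1}^n T_\ell$ I would write $M_n := \sum_{\ell=1}^n\bigl(T_\ell - \varphi(\tau_{\hat X_{\ell-1}}\omega)\bigr)$, which is a martingale with respect to the filtration generated by the full trajectory of $X$ (equivalently, by $(\hat X_\ell, T_\ell)$). It then suffices to show $M_n/n\to 0$ in $P^0_\omega$-probability. For this I would truncate: fix a large $R$, split $T_\ell = T_\ell\1_{\{T_\ell\le R\}} + T_\ell\1_{\{T_\ell> R\}}$, handle the bounded part by the $L^2$ martingale law of large numbers (the conditional second moments are bounded by $E^{\hat X_{\ell-1}}_\omega(T_1^2\wedge R^2)$, which are themselves controlled by an ergodic average converging a.s.\ by another application of Birkhoff, so $\mathrm{Var}$-type sums grow like $O(n)$ and $M_n^{(\le R)}/n\to0$ a.s.), and handle the tail part by noting its $\Q_\alpha$-expectation $\E_{\Q_\alpha}(\varphi \,; \text{excess})\to0$ as $R\to\infty$ by dominated convergence (using $\varphi\in L^1$), so the corresponding ergodic average is $<\epsilon$ eventually. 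Combining, $\limsup_n \frac1n\sum_{\ell=1}^n T_\ell \le \beta_0 + \epsilon$ for every $\epsilon$, hence $\le\beta_0$. Choosing any $\beta>\beta_0$ then gives \eqref{E:3.8}; finally, since this holds for $\Q_\alpha$-a.e.\ $\omega$ and $\Q_\alpha\sim\PP_\alpha$, it holds for $\PP_\alpha$-a.e.\ $\omega$ as well.

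The main obstacle is the passage from the ergodic-theorem statement for the conditional means \eqref{E:lln-phi} to a genuine law of large numbers for the sum $\sum T_\ell$ itself, because the $T_\ell$ are not bounded and not identically distributed along a given trajectory — one really needs the martingale truncation argument above, with the crucial uniform integrability coming from the moment bound on $|\scrG_0|$ (property~(4) of Proposition~\ref{P:perc}) via Lemma~\ref{L:hiding-time}. A secondary technical point is to make sure the ``environment viewed from the particle'' chain is genuinely the right object: one must check that $T_1$, and more generally $(\hat X_\ell, T_\ell)_{\ell\ge1}$, is a measurable functional of the environment process in a way compatible with the Markov-shift ergodicity quoted from \cite{BB}, so that Birkhoff applies to the additive functional $\sum_\ell \varphi(\tau_{\hat X_{\ell-1}}\omega)$ and to $\sum_\ell (\varphi^2\wedge R^2)(\tau_{\hat X_{\ell-1}}\omega)$. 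Both of these are routine given the setup already laid out in the excerpt, so the proof should be short.
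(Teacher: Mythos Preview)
Your proposal is correct and uses the same core ingredients as the paper (the hiding-time bound, the moment control on $|\scrG_0|$, and ergodicity of the environment chain under $\Q_\alpha$), but you take a detour that the paper avoids. The paper's proof is two lines: it chooses $\beta>E_{\Q_\alpha}(E_\omega^0(T_1))$ and then applies the ergodic theorem \emph{directly} to $\tfrac1n\sum_{\ell=1}^n T_\ell$, obtaining $P_\omega^0$-a.s.\ convergence to $E_{\Q_\alpha}(E_\omega^0(T_1))$ for $\PP_\alpha$-a.e.~$\omega$.

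The ``main obstacle'' you flag --- that $T_\ell$ is not a function of $\tau_{\hat X_{\ell-1}}\omega$ alone --- is not actually an obstacle. Ergodicity of the environment chain $(\tau_{\hat X_\ell}\omega)_{\ell\ge0}$ under $\Q_\alpha$ lifts to ergodicity of the full annealed trajectory measure $\Q_\alpha(\textd\omega)P_\omega^0$ on the path space under the time shift; under that shift the sequence $(T_\ell)_{\ell\ge1}$ is stationary and ergodic, and since $T_1\in L^1$ of the annealed measure (by your own estimate $E_\omega^0(T_1)\le c_2|\scrG_0|$), Birkhoff gives the limit directly. Your martingale/truncation machinery is therefore sound but superfluous; it re-proves a special case of the standard fact that the ergodic theorem for a Markov chain applies to any $L^1$ functional of the trajectory, not just functions of the current state.
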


\begin{proofsect}{Proof}
Fix~$\alpha\in(0,\alpha_0]$ and let $\beta$ be such that
\begin{equation}
\label{E:beta-choice}
\beta> E_{\Q_\alpha}\bigl(E_\omega^0(T_1)\bigr).
\end{equation}
Such a choice is possible because the expectation on the right is finite by Lemma~\ref{L:hiding-time}, the bounds $\pi_\omega(0)\le2d$ and $\PP(0\in\scrC_{\infty,\alpha})>0$ and the fact that $\E|\scrG_x|<\infty$, as implied by Proposition~\ref{P:perc}(4).
The ergodicity of the Markov shift on the space of environments implies that, for $\PP_\alpha$-a.e.~$\omega$,
\begin{equation}
\frac1n\sum^{n}_{\ell=1} T_\ell\xrightarrow[n\to\infty]{} E_{\Q_\alpha}\bigl(E_\omega^0(T_1)\bigr),\qquad P_\omega^0\text{-a.s.}
\end{equation}
The right-hand side is strictly less than~$\beta$ and so the claim follows.
\end{proofsect}

As alluded to before, the reduction to the coarse-grained walk, and the resulting time change, will need to be performed inside the expectation of random variables
\begin{equation}
R_{n,k}:=\sum_{\ell=t_k}^{2t_k}\1_{\AA_n(\hat X_\ell)}\1_{\{\hat X_\ell\in B_k^\circ\}},
\end{equation}
which --- as we will demonstrate soon --- will serve as a lower bound on the sum in \eqref{keyeq}. We will need estimates on the first two moments of $R_{n,k}$:

\begin{proposition}[Moment bounds]
\label{prop-moments}
Let $d\ge4$ and suppose~$\rho_n$ obeys \eqref{rho-n-decay} for some sequence $\{n_\ell\}_{\ell\ge1}$. Let $\alpha\in(0,\alpha_0]$. Then there are $\PP_\alpha$-a.s.\ finite and positive random variables $C_3=C_3(\omega)$, $C_4=C_4(\omega)$ and $N_2=N_2(\omega)$ such that for all $n\in\{n_\ell\}_{\ell\ge1}$ with~$n\ge N_2$ and all $k$ satisfying \eqref{E:2.24} we have
\begin{equation}
\label{E:3.13}
E_\omega^0\bigl(R_{n,k})\ge C_3(\omega)\rho_n t_k
\end{equation}
and
\begin{equation}
\label{E:3.14}
E_\omega^0\bigl(R_{n,k}^2)\le C_4(\omega)\bigl(\rho_n t_k\bigr)^2.
\end{equation}
\end{proposition}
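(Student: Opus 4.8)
\section*{Proof proposal for Proposition~\ref{prop-moments}}

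\emph{Proof proposal.}
Both estimates will be assembled in Section~\ref{sec4} from two external inputs. The first is a pair of Gaussian heat-kernel bounds for the coarse-grained walk~$\hat X$ on~$\scrC_{\infty,\alpha}$: there is a $\PP_\alpha$-a.s.\ finite random~$N(\omega)$ so that, for $m\ge N(\omega)$ and $x,y\in\scrC_{\infty,\alpha}$,
\[
P^x_\omega(\hat X_m=y)\ \le\ C\,m^{-d/2}\,\texte^{-c\,\textd'_\omega(x,y)^2/m}
\qquad\text{and, if }\textd'_\omega(x,y)\le m,\qquad
P^x_\omega(\hat X_m=y)\ \ge\ c\,m^{-d/2}\,\texte^{-C\,\textd'_\omega(x,y)^2/m},
\]
with~$\textd'_\omega$ as in Proposition~\ref{P:perc}; these are proved in Sections~\ref{sec6}--\ref{sec7} and genuinely use Proposition~\ref{P:perc}(4)--(5) to control the unbounded increments of~$\hat X$ across the holes~$\scrF_x$. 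The second input is the trap-density control of Section~\ref{sec5}: writing $\mathcal N_k:=\#\{x\in B_k^\circ\cap\scrC_{\infty,\alpha}\colon\AA_n(x)\text{ occurs}\}$, there is a $\PP_\alpha$-a.s.\ finite random~$N'(\omega)$ such that for all $n\in\{n_\ell\}$ with $n\ge N'(\omega)$ and all~$k$ obeying \eqref{E:2.24},
\[
c\,\rho_n\,|B_k|\ \le\ \mathcal N_k\ \le\ C\,\rho_n\,|B_k|
\qquad\text{and}\qquad
\sup_{x\in B_k^\circ\cap\scrC_{\infty,\alpha}}\ \sum_{y\in\Z^d}\1_{\AA_n(y)}\,\texte^{-c\,\textd'_\omega(x,y)^2/m}\ \le\ C\bigl(\rho_n\,m^{d/2}+\log n\bigr),\quad 1\le m\le t_k,
\]
the $\log n$ (in fact a fixed power of it, which makes no difference below) recording the fluctuations of the trap count in balls of radius below~$\rho_n^{-1/d}$. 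I will use repeatedly that the lower restriction $t_k\ge\texte^{(\log\log n)^2}$ in \eqref{E:2.24}, together with \eqref{rho-n-decay} (whence $\rho_n\ge1/\log n$ for large~$n$ along~$\{n_\ell\}$), forces $\rho_n t_k\ge\texte^{(\log\log n)^2}/\log n$, which dominates every fixed power of~$\log n$ once~$n$ is large; in particular $\rho_n t_k\ge1$ eventually.

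\emph{First moment.} For $\PP_\alpha$-a.e.~$\omega$ we have $0\in\scrC_{\infty,\alpha}$, and by Proposition~\ref{P:perc}(3), $\textd'_\omega(0,x)\le\dist_\omega(0,x)\le C_0(\omega)\,|x|\le C_1(\omega)\sqrt{t_k}\le\ell$ for every $x\in B_k^\circ\cap\scrC_{\infty,\alpha}$, every $\ell\in\{t_k,\dots,2t_k\}$ and every $n$ large enough that~$B_k^\circ$ lies in the regime where~(3) applies. The heat-kernel lower bound then yields $P^0_\omega(\hat X_\ell=x)\ge c(\omega)\,t_k^{-d/2}$ for all such~$\ell$ (after, if needed, averaging~$\ell$ with~$\ell+1$ to dispose of any residual near-periodicity of~$\hat X$). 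Summing over~$\ell$ and then over~$x$,
\[
E_\omega^0(R_{n,k})\ =\ \sum_{x\in B_k^\circ}\1_{\AA_n(x)}\sum_{\ell=t_k}^{2t_k}P^0_\omega(\hat X_\ell=x)\ \ge\ c(\omega)\,t_k^{\,1-d/2}\,\mathcal N_k\ \ge\ c'(\omega)\,\rho_n\,|B_k|\,t_k^{\,1-d/2}\ \ge\ C_3(\omega)\,\rho_n\,t_k,
\]
the last step using that $|B_k|$ and $t_k^{d/2}$ are of the same order.

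\emph{Second moment.} As the trace of a reversible Markov chain, $\hat X$ is itself Markov under~$P^0_\omega$, so for $t_k\le\ell<\ell'\le2t_k$ the Markov property at time~$\ell$ gives
\[
E_\omega^0\bigl(\1_{\AA_n(\hat X_\ell)}\1_{\{\hat X_\ell\in B_k^\circ\}}\1_{\AA_n(\hat X_{\ell'})}\1_{\{\hat X_{\ell'}\in B_k^\circ\}}\bigr)\ =\ E_\omega^0\bigl(\1_{\AA_n(\hat X_\ell)}\1_{\{\hat X_\ell\in B_k^\circ\}}\,\psi_{\ell'-\ell}(\hat X_\ell)\bigr),\qquad
\psi_m(x):=\sum_{y\in B_k^\circ}\1_{\AA_n(y)}\,P^x_\omega(\hat X_m=y).
\]
The diagonal terms $\ell=\ell'$ contribute at most $E_\omega^0(R_{n,k})\le\sum_{\ell=t_k}^{2t_k}\mathcal N_k\,C\,t_k^{-d/2}\le C(\omega)\,\rho_n t_k\le C(\omega)\,(\rho_n t_k)^2$, the last inequality by $\rho_n t_k\ge1$. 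For the off-diagonal terms, summing over $\ell<\ell'$ and pulling out a supremum,
\[
\sum_{t_k\le\ell<\ell'\le2t_k}E_\omega^0\bigl(\,\cdots\,\bigr)\ \le\ \Bigl(\,\sup_{x\in B_k^\circ\cap\scrC_{\infty,\alpha}}\ \sum_{m=1}^{t_k}\psi_m(x)\Bigr)\ E_\omega^0(R_{n,k}).
\]
Now for $m\ge N(\omega)$ the Gaussian upper bound and the weighted trap-density bound give $\psi_m(x)\le C\,m^{-d/2}\bigl(\rho_n m^{d/2}+\log n\bigr)=C\rho_n+C\,m^{-d/2}\log n$, while the finitely many smaller~$m$ contribute~$O(N(\omega))$; since $d\ge4$ makes $\sum_{m\ge1}m^{-d/2}$ convergent, $\sum_{m=1}^{t_k}\psi_m(x)\le C\rho_n t_k+C\log n+O(N(\omega))\le C_4'(\omega)\,\rho_n t_k$, once more because $\rho_n t_k$ dominates~$\log n$. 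Feeding this and the first-moment upper bound $E_\omega^0(R_{n,k})\le C(\omega)\,\rho_n t_k$ into the previous display bounds the off-diagonal contribution by $C_4''(\omega)\,(\rho_n t_k)^2$; adding the two contributions yields \eqref{E:3.14}.

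The arithmetic above is routine; the entire weight of the proposition lies in the two external inputs, and the genuinely delicate one is the trap-density control at \emph{small} scales. The full-annulus concentration $\mathcal N_k\asymp\rho_n|B_k|$ is a Chebyshev--Borel--Cantelli argument, but since the number of relevant pairs $(n_\ell,k)$ shrinks only polylogarithmically it must be driven by an \emph{exponential} deviation bound --- which is exactly where the finite-range dependence of the events $\AA_n(\cdot)$ and the lower restriction $t_k\ge\texte^{(\log\log n)^2}$ of \eqref{E:2.24} enter. The weighted bound additionally needs a uniform anti-clustering estimate of the shape $\sup_x\#\{y\colon|y-x|\le r,\ \AA_n(y)\}\le C(\rho_n r^d+(\log n)^{O(1)})$ for all radii $r\ge1$, proved by a Poisson-type tail on the local trap count and a union bound over~$x\in B_k^\circ$; this is the origin of the polylogarithmic error that \eqref{E:2.24} is designed to absorb. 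A secondary, essentially orthogonal difficulty is that the heat-kernel bounds of Sections~\ref{sec6}--\ref{sec7} concern~$\hat X$, which jumps across the holes~$\scrF_x$, so establishing them with the Gaussian exponent expressed through~$\textd'_\omega$ genuinely requires the moment and large-deviation controls of Proposition~\ref{P:perc}(4)--(5). I expect the bookkeeping of the several random thresholds and cluster-dependent constants to be tedious but not conceptually serious.
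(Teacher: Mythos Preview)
Your overall architecture is right --- first moment from a heat-kernel lower bound plus a trap-density lower bound, second moment from a heat-kernel upper bound plus anti-clustering of traps --- and the arithmetic you sketch is sound. But the technical route you propose differs from the paper's in a way worth flagging, and your description of what Sections~\ref{sec6}--\ref{sec7} prove is inaccurate.

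For the lower bound, the paper does \emph{not} establish a pointwise Gaussian lower bound $P_\omega^x(\hat X_m=y)\ge c\,m^{-d/2}\texte^{-C\,\textd'_\omega(x,y)^2/m}$; it only proves the summed estimate $\sum_{\ell=t_k}^{2t_k}\hat{\cmss P}_\omega^\ell(0,x)\ge c_3 t_k^{1-d/2}$ (Lemma~\ref{L:HK-lower}), which is exactly what the first-moment calculation needs. This is obtained in Section~\ref{sec7} via a Fabes--Stroock/Nash argument adapted from Barlow, passing through the continuous-time kernel.

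For the second moment the divergence is more substantial. You invoke a full off-diagonal Gaussian \emph{upper} bound for~$\hat X$ and a $\textd'_\omega$-weighted trap-density estimate. The paper avoids both: it uses only the on-diagonal bound $\hat{\cmss P}_\omega^\ell(0,x)\le C_6(\omega)\ell^{-d/2}$ (Lemma~\ref{L:HK-upper}) to reduce $E_\omega^0(R_{n,k}^2)$ to $t_k^{1-d/2}\langle f_k,\hat{\cmss G}_\omega f_k\rangle_\omega$, and then --- this is the key device --- compares the Green's function of~$\hat X$ to that of simple random walk on the percolation cluster~$\scrC_{\infty,\alpha}$ via an operator inequality on Dirichlet forms (Lemma~\ref{L:G-compare}). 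The latter Green's function has the Barlow--Hambly bound $\widetilde{\cmss G}_{\alpha,\omega}(x,y)\le\tilde c_1|x-y|^{-(d-2)}$ in \emph{Euclidean} distance (Lemma~\ref{L:Barlow}), and the remaining trap sum $\sum_{|x-y|\ge\log n}\1_{\AA_n(x)}\1_{\AA_n(y)}/|x-y|^{d-2}$ is handled by a dyadic decomposition plus uniform density control in Euclidean boxes (Lemma~\ref{lemma-sum-upper}). The short-range part $|x-y|\le(\log n)^\theta$ is bounded crudely using $\E_\alpha\widetilde{\cmss G}_{\alpha,\omega}(0,0)<\infty$ and the Spatial Ergodic Theorem.

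What this buys: the paper never needs off-diagonal heat-kernel bounds for~$\hat X$ itself, which would be delicate because~$\hat X$ has unbounded jumps across the holes~$\scrF_x$. Your route is in principle viable, but the Gaussian upper bound you assume for~$\hat X$ is not proved in the paper and would require nontrivial additional work; the Green's-function comparison is a cleaner shortcut that offloads the hard estimate to the already-understood percolation walk.
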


The proof of these bounds is deferred to Sections~\ref{sec4}-\ref{sec7}. We will now show how the ingredients assemble in the proof of the Key Lemma:

\begin{proofsect}{Proof of Key Lemma from Proposition~\ref{prop-moments}}
It is clear that it suffices to prove the statement for $\PP_\alpha$-a.e.\ $\omega$ and all $\alpha>0$ sufficiently small, because the support of~$\PP$ can be covered by the union of supports of $\PP_{\alpha_r}$ for some $\alpha_r\downarrow0$. We will assume throughout that $n\in\{n_\ell\}_{\ell\ge1}$.

Let~$\alpha\in(0,\alpha_0]$ and let $\omega$ be such that there is a unique infinite connected component~$\scrC_{\infty,\alpha}$ whose complement has only finite connected components.
Let $\beta=\beta(\alpha,d)$ be as in Lemma~\ref{L:time-change} and suppose \eqref{E:3.8} is valid for this~$\omega$. Assume also that $C_3(\omega)$, $C_4(\omega)$ and $N_2(\omega)$ from Proposition~\ref{prop-moments} are finite and positive. Consider the event
\begin{equation}
\EE_k:=\biggl\{\,\sum^{2t_k}_{\ell=1} T_\ell\leq 2\beta t_k \biggr\}.
\end{equation}
Now define~$C_2$ and~$N_1$ as follows: Let $C_2(\omega):=\frac12 C_3(\omega)$ and let $N_1(\omega)$ denote the least integer~$n'\ge N_2(\omega)\vee\texte^{8\beta}$ such that 
\begin{equation}
\label{E:C3}
t_k\ge\texte^{(\log\log n')^2}\quad\Rightarrow\quad
C_3(\omega)\ge4\sqrt{C_4(\omega)P_\omega^0(\EE_k^\cc)}.
\end{equation}
Clearly, $N_1(\omega)<\infty$ because $P_\omega^0(\EE_k^\cc)\to0$ as $k\to\infty$ holds for~$\omega$.

Having made the necessary definitions, we can now get to the actual argument. A starting point is to notice that, for the paths of the random walk $X$ belonging to $\EE_k$ and~$k$ such that $2\beta t_k\le\ffrac n2-1$, the sum in \eqref{keyeq} can be bounded below by $R_{n,k}$,
\begin{equation}
\1_{\EE_k}\sum^{n/2-1}_{\ell=0}\1_{\{X_\ell\in B_k^\circ\}}\1_{\AA_n(X_\ell)}
\ge R_{n,k}\1_{\EE_k}.
\end{equation}
The upper bound in \eqref{E:2.24} shows that $2\beta t_k\le\ffrac n2-1$ once $n\ge\texte^{8\beta}$, and so for $n\ge N_2$ it suffices to derive the desired lower bound for $E_\omega^0(R_{n,k}\1_{\EE_k})$ instead. For this we introduce
\begin{equation}
\FF_{n,k}:=\bigl\{R_{n,k}\le M\rho_n t_k\bigr\},
\end{equation}
where $M>0$ is a number to be determined momentarily, and write
\begin{equation}
\label{E:3.18}
\begin{aligned}
E_\omega^0(R_{n,k}\1_{\EE_k})&\ge E_\omega^0\bigl(R_{n,k}\1_{\EE_{k}\cap\FF_{n,k}}\bigr)
\\
&= E_\omega^0(R_{n,k}) - E_\omega^0(R_{n,k}\1_{\EE_k^\cc\cap\FF_{n,k}}) - E_\omega^0\bigl(R_{n,k}\1_{\FF_{n,k}^\cc}\bigr),
\end{aligned}
\end{equation}
where we also used that $R_{n,k}\ge0$. 

It remains to estimate the three terms on the right-hand side of \eqref{E:3.18}. From the definition of~$\FF_{n,k}$ we immediately have
\begin{equation}
E_\omega^0(R_{n,k}\1_{\EE_k^\cc\cap\FF_{n,k}})\le M\rho_n t_k\, P_\omega^0(\EE_k^\cc).
\end{equation}
For the last term in \eqref{E:3.18}, since $n\ge N_1(\omega)\ge N_2(\omega)$ and~$k$ obeys \eqref{E:2.24}, the Markov inequality and Proposition~\ref{prop-moments} yield
\begin{equation}
E_\omega^0\bigl(R_{n,k}\1_{\FF_{n,k}^\cc}\bigr)\le\frac1{M\rho_n t_k}E_\omega^0\bigl(R_{n,k}^2\bigr)
\le\frac{C_4(\omega)}{M}\rho_n t_k.
\end{equation}
Along with \eqref{E:3.13} this shows that all three terms on the right-hand side of \eqref{E:3.18} are of the same order. This permits us to turn \eqref{E:3.18} into
\begin{equation}
E_\omega^0(R_{n,k}\1_{\EE_k})\ge \Bigl(C_3(\omega)-MP_\omega^0(\EE_k^\cc)-\frac{C_4(\omega)}M\Bigr)\rho_n t_k.
\end{equation}
Now set $M:=[C_4(\omega)/P_\omega^0(\EE_k^\cc)]^{\ffrac12}$ and note that, by \eqref{E:C3} and our choice of $C_2(\omega)$, the term in the parenthesis multiplying $\rho_n t_k$ is at least~$C_2(\omega)$. 
\end{proofsect}

\section{Moment bounds on~$R_{n,k}$}
\label{sec4}\noindent
At this point, the proof of our main result has been reduced to the moment estimates from Proposition~\ref{prop-moments}. There are generally two types of technical ingredients we will need to invoke in both cases: appropriate heat-kernel bounds and estimates on the density of points $x\in B_k^\circ\cap\scrC_{\infty,\alpha}$ where~$\AA_n(x)$ occurs.  
To demonstrate the underlying reason for invoking these facts, let us again begin by a heuristic argument that explains why the bound on~$E_\omega^0(R_{n,k})$ should hold true. 

Consider the coarse-grained walk~$\hat X$ and let $\hat{\cmss P}_\omega$ denote its transition probability on~$\scrC_{\infty,\alpha}$. Explicitly, using the notation \eqref{E:Ts} we have:
\begin{equation}
\hat{\cmss P}_\omega(x,y)=P_\omega^x(X_{T_1}=y),\qquad x,y\in\scrC_{\infty,\alpha}(\omega).
\end{equation}
Then we can write
\begin{equation}
\label{E:ERk-rewrite}
E_\omega^0(R_{n,k})=\sum_{x\in B_k^\circ\cap\scrC_{\infty,\alpha}}\biggl(\,\sum_{\ell=t_k}^{2t_k}
\hat{\cmss P}_\omega^\ell(0,x)\biggr)\,\1_{\AA_n(x)}.
\end{equation}
Since the temporal scale~$t_k$ and the spatial scale of~$B_k$ are related by diffusive scaling, and the chain~$\hat X$ has good mixing properties, it is now \emph{quite} reasonable to expect that~$\hat X_\ell$ is in the time range $t_k\le\ell\le2t_k$ more or less evenly distributed over~$B_k^\circ\cap\scrC_{\infty,\alpha}$. In particular, the sum over~$\ell$ in \eqref{E:ERk-rewrite} is at least of order $t_k^{1-d/2}$, uniformly in $x\in B_k^\circ\cap\scrC_{\infty,\alpha}$. The bound \eqref{E:3.13} is thus reduced to estimating the lower density of $\AA_n$ in~$B_k^\circ\cap\scrC_{\infty,\alpha}$.

Unfortunately, the desired lower bound on $\hat{\cmss P}_\omega^\ell(0,x)$ does not seem to be presently available in the literature and so we will have to state and prove it here:

\begin{lemma}
\label{L:HK-lower}
Let~$d\ge2$. For each~$\alpha\in(0,\alpha_0]$ there is a constant~$c_3>0$ and a $\PP_\alpha$-a.s.\ finite random variable $N_4=N_4(\omega)$ such that
\begin{equation}
\label{E:HK-lower}
\sum_{\ell=t_k}^{2t_k}\hat{\cmss P}_\omega^\ell(0,x)\ge c_3t_k^{1-d/2}
\end{equation}
holds for all $x\in B_k\cap\scrC_{\infty,\alpha}$ whenever $t_k\ge N_4(\omega)$.
\end{lemma}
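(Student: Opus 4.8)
The plan is to deduce the lower bound \eqref{E:HK-lower} from the quenched local-CLT (or at least a matching near-diagonal Gaussian lower bound) for the coarse-grained walk $\hat X$ on the strong component $\scrC_{\infty,\alpha}$. The chain $\hat X$ is a reversible Markov chain on $\scrC_{\infty,\alpha}$ with conductances bounded below by $\alpha$ and above by $2d$, living on a graph which, by Proposition~\ref{P:perc}, is quasi-isometric to $\Z^d$ (finite holes with exponential tails, linearly distorted chemical distance). Such graphs satisfy a volume-doubling property and a Poincar\'e inequality at all scales once one is past the (random) scale $N_4(\omega)$ beyond which the local geometry has stabilized; this is exactly the setting in which Barlow's parabolic Harnack inequality and the resulting Gaussian heat-kernel bounds apply. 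From these one obtains, for $|x|\le C\sqrt{\ell}$ and $\ell\ge N_4(\omega)$, a lower bound $\hat{\cmss P}_\omega^\ell(0,x)+\hat{\cmss P}_\omega^{\ell+1}(0,x)\ge c\,\ell^{-d/2}$ (the two-step sum absorbs bipartite-type parity issues). Here $\hat X$ is not literally the SRW on $\scrC_{\infty,\alpha}$, but it is a bounded perturbation of it --- the holdings $T_j$ have all moments by Lemma~\ref{L:hiding-time} and Proposition~\ref{P:perc}(4) --- so the same estimates transfer.

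\textbf{Key steps.} First, record that for $x\in B_k\cap\scrC_{\infty,\alpha}$ we have $|x|_\infty< 2^k$, hence $|x|\le C\sqrt{t_k}$ by the definition $t_k=2^{2k}$; thus $x$ lies in the near-diagonal regime for all times $\ell\in[t_k,2t_k]$. Second, invoke the quenched Gaussian lower bound for $\hat{\cmss P}_\omega$: there is a deterministic $c>0$ and a $\PP_\alpha$-a.s.\ finite $N_4(\omega)$ such that for every $\ell\ge N_4(\omega)$ and every $y\in\scrC_{\infty,\alpha}$ with $\textd'_\omega(0,y)$ (equivalently $|y|$, using Proposition~\ref{P:perc}(3,5)) at most $c'\sqrt\ell$,
\begin{equation}
\hat{\cmss P}_\omega^{\ell}(0,y)+\hat{\cmss P}_\omega^{\ell+1}(0,y)\ge c\,\ell^{-d/2}.
\end{equation}
Third, sum over $\ell\in[t_k,2t_k]$: restricting to the roughly $t_k$ values of $\ell$ in that range (and, if one wants to be careful about parity, pairing $\ell$ with $\ell+1$), the sum is at least $c\cdot t_k\cdot (2t_k)^{-d/2}\ge c_3\,t_k^{1-d/2}$ for a possibly smaller constant $c_3>0$, valid once $t_k\ge N_4(\omega)$. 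This is \eqref{E:HK-lower}.

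\textbf{Main obstacle.} The crux is establishing the quenched near-diagonal lower bound for the coarse-grained chain $\hat X$, since --- as the authors themselves flag in the text preceding the lemma --- this is not directly available in the literature. The route I would take is: (i) show that $\G_\omega$ restricted to $\scrC_{\infty,\alpha}$ satisfies volume regularity and a scale-$R$ Poincar\'e inequality for all $R\ge N_4(\omega)$, using the coarse-graining/renormalization of Proposition~\ref{P:perc} (occupied cubes dominate supercritical site percolation, finite holes have exponential tails) together with the standard isoperimetry of the infinite supercritical cluster --- this is essentially the Barlow--Deuschel / Mathieu--Remy package; (ii) apply Barlow's theorem (\cite{Barlow}, or Delmotte's characterization \cite{Delmotte}) to get two-sided Gaussian bounds, in particular the lower bound, for the \emph{constant-speed} or \emph{discrete-time SRW} on $\scrC_{\infty,\alpha}$ with conductances in $[\alpha,2d]$; (iii) transfer to $\hat X$ by comparing Dirichlet forms --- $\hat X$'s jump kernel $\hat{\cmss P}_\omega$ dominates a multiple of one step of the SRW on $\scrC_{\infty,\alpha}$ (a direct $\scrC_{\infty,\alpha}$-neighbor is reached in one $\hat X$-step with probability bounded below, using ellipticity and the exponential tail of hole diameters), and its holding times have uniformly controlled moments, so the time change between $\hat X$ and the SRW is comparable on the diffusive scale (Lemma~\ref{L:time-change} is the in-probability version of exactly this). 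One technical nuisance to watch is periodicity/parity of $\hat X$, handled as above by summing consecutive times; another is that $N_4(\omega)$ must be chosen uniformly over the starting point $0$ fixed at the origin, which is fine since we only ever start at $0$. I expect the bulk of the work — and it is deferred in the paper to Sections~\ref{sec6}--\ref{sec7} — to be in carefully proving the Poincar\'e/volume inputs and the comparison (iii), rather than in the (routine) summation over $\ell$.
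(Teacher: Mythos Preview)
Your overall strategy --- obtain a near-diagonal Gaussian lower bound via volume regularity plus a Poincar\'e inequality, then sum over $\ell\in[t_k,2t_k]$ --- is the right one, and the summation step is fine. The gap is in step~(iii). Dirichlet-form comparison is the correct tool for heat-kernel and Green's-function \emph{upper} bounds (this is precisely how the paper handles Lemma~\ref{L:G-upper}; see Lemma~\ref{L:G-compare}), but it does not transfer pointwise \emph{lower} bounds on $\hat{\cmss P}_\omega^\ell(0,x)$ from the SRW on $\scrC_{\infty,\alpha}$ to $\hat X$. One-step domination $\hat{\cmss P}_\omega(x,y)\ge c\,\widetilde{\cmss P}_{\alpha,\omega}(x,y)$ loses a factor $c^n$ upon iteration, and the operator inequality goes the wrong way for lower bounds. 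Note also that $\hat X$ is \emph{not} a nearest-neighbor walk on $\scrC_{\infty,\alpha}$ with conductances in $[\alpha,2d]$ as you wrote: it makes genuine long-range jumps across holes, so Barlow/Delmotte do not apply to it off the shelf.

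The paper's resolution is to do the comparison at the level of the Poincar\'e inequality rather than the heat kernel. Since $\hat\omega_{xy}\ge\alpha\,\widetilde\omega_{xy}$ on nearest-neighbor edges of $\scrC_{\infty,\alpha}$ (and the extra long edges only help the Dirichlet form), Barlow's weighted Poincar\'e inequality for SRW on $\scrC_{\infty,\alpha}$ transfers directly to the Dirichlet form of $\hat X$ (Lemma~\ref{L:7.6}). The Fabes--Stroock/Nash differential-inequality argument is then run \emph{directly} for the continuous-time version $\widetilde X_t:=\hat X_{N_t}$ of $\hat X$: with the Poincar\'e inequality, the diagonal upper bound \eqref{E:q-bd}, and an expected-displacement bound \eqref{E:E-bd} as inputs, one obtains $\cmss q_t(0,x)\ge c_7\texte^{-\xi tR^{-2}}R^{-d}$ for $|x|\le R$ and $t\ge R^2$ (Proposition~\ref{prop-lower}). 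The return to discrete time is handled not by a parity trick but via the Poisson coupling: writing $\pi_\omega(x)\int_{4t_k/3}^{5t_k/3}\cmss q_t(0,x)\,\textd t=\sum_n a_{n,k}\,\hat{\cmss P}_\omega^n(0,x)$ with $a_{n,k}\le1$, a Cram\'er bound shows $\sum_{n\notin[t_k,2t_k]}a_{n,k}$ is exponentially small in $t_k$, which yields \eqref{E:HK-lower} directly for the sum --- hence the paper's remark that the proof bounds the sum, not the individual terms.
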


(We note that our proof of this lemma produces directly a bound on the sum, not on the individual terms.) As already alluded to above, we will need to combine this with the following bound on the density of occurrences of~$\AA_n$ in the set $B_k\cap\scrC_{\infty,\alpha}$:

\begin{lemma}
\label{L:density-lower}
Let~$d\ge4$ and suppose that \eqref{rho-n-decay} holds for some sequence $\{n_\ell\}_{\ell\ge1}$.
Let $\alpha\in(0,\alpha_0]$. There is a constant~$c_4=c_4(d,\alpha)<\infty$ and a $\PP_\alpha$-a.s.\ finite random variable $N_5=N_5(\omega)$ such that for all~$n\in\{n_\ell\}_{\ell\ge1}$ with~$n\ge N_5(\omega)$ and all~$k$ with $t_k\ge\log n$, 
\begin{equation}
\label{E:density-lower}
\sum_{x\in B_k^\circ\cap\scrC_{\infty,\alpha}}\1_{\AA_n(x)}\ge c_4\rho_n|B_k|.
\end{equation}
\end{lemma}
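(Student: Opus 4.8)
\textbf{Proof proposal for Lemma~\ref{L:density-lower}.}

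The plan is to produce enough ``candidate trap sites'' inside $B_k^\circ\cap\scrC_{\infty,\alpha}$ by a first-moment/second-moment (or direct ergodic-type) argument and then show that the event $\AA_n(x)$ occurs at a positive density of them. First I would note that the number of vertices in $B_k^\circ$ is comparable to $|B_k|$ (the boundary layer removed in passing from $B_k$ to $B_k^\circ$ is lower-order), and that $B_k^\circ\cap\scrC_{\infty,\alpha}$ has density bounded below by $\PP(0\in\scrC_{\infty,\alpha})>0$ uniformly as $k\to\infty$, by the ergodic theorem applied to the indicator $\1_{\{0\in\scrC_{\infty,\alpha}\}}$ along the annuli $B_k$ (using that $|B_k^\circ|\to\infty$ and a covering of the annulus by boxes). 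So it is enough to show that, for a positive fraction of $x\in B_k^\circ\cap\scrC_{\infty,\alpha}$, the event $\AA_n(x)$ holds.

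The core estimate is the following. For a fixed $x$, the event $\AA_n(x)$ is determined by the conductances on the $O(1)$ edges within graph-distance $2$ of $x$, and $\PP(\AA_n(x))\ge\rho_n$ by the definition of $\rho_n$ and the displayed inequality $\PP(\AA_n(0))\ge\rho_n$ noted just before the Key Lemma. The issue is that $\AA_n(x)$ requires $x$ to be adjacent to a trap edge whose other incident edges are all \emph{weak} (of order $1/n$), so $\AA_n(x)$ is incompatible with $x\in\scrC_{\infty,\alpha}$ being connected to $\scrC_{\infty,\alpha}$ through $x$'s immediate neighbourhood --- but this is fine: $x$ can be joined to $\scrC_{\infty,\alpha}$ via a path that leaves the neighbourhood of $x$, so a trap edge sitting next to $x$ does not disconnect $x$ from the infinite strong component. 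Concretely, I would work with a slightly enlarged cutoff structure: demand that $\AA_n(x)$ occur (an event on the edges incident to $x$ and its trap-partner) \emph{and} that $x$ still lie in $\scrC_{\infty,\alpha}$, and estimate the conditional probability $\PP\bigl(x\in\scrC_{\infty,\alpha}\mid \AA_n(x)\bigr)$ from below by a uniform constant. This follows from the standard gluing/renormalisation picture used in the sketch proof of Proposition~\ref{P:perc}: conditioning on the configuration in a bounded neighbourhood of $x$ does not destroy percolation of occupied cubes far from $x$, and connectivity of $x$ to that far-away infinite cluster can be arranged through a fixed finite detour with probability bounded below (this uses $\PP(\omega_b\ge\alpha_0)\ge p_0$ and the fact that traps have positive, not zero, ``strong'' conductance on the trap edge itself).

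Granting that $\PP\bigl(\AA_n(x)\cap\{x\in\scrC_{\infty,\alpha}\}\bigr)\ge c\,\rho_n$ for a constant $c=c(d,\alpha)>0$, I would conclude by a second-moment argument along $B_k^\circ$. Write $S_k:=\sum_{x\in B_k^\circ\cap\scrC_{\infty,\alpha}}\1_{\AA_n(x)}$; then $\E S_k\ge c\,\rho_n|B_k^\circ|\ge c'\rho_n|B_k|$. For the variance, the events $\AA_n(x)$ and $\AA_n(y)$ are independent once $|x-y|_\infty$ exceeds a fixed constant $r_0$ (they depend on disjoint edge sets), and the correlation between $\1_{\{x\in\scrC_{\infty,\alpha}\}}$ and $\1_{\{y\in\scrC_{\infty,\alpha}\}}$ decays --- but here I only need that the covariances are nonnegative for the connectivity part and that the trap parts decorrelate at distance $r_0$; more carefully, $\scrC_{\infty,\alpha}$ membership has polynomial (in fact exponential, up to the finite-cluster caveat) decay of correlations via Proposition~\ref{P:perc}, so $\mathrm{Var}(S_k)=o\bigl((\rho_n|B_k|)^2\bigr)$ once $|B_k|$ is large — and this is where the hypothesis $t_k\ge\log n$, equivalently $|B_k|\gtrsim(\log n)^2$, enters, since $\rho_n$ may be polynomially small and we need $\rho_n|B_k|\to\infty$ fast enough to beat the error terms (each contributing $O(\rho_n|B_k|)$ from the diagonal and $O((\rho_n)^2|B_k|\cdot r_0^d)$ from near-diagonal pairs, both $o((\rho_n|B_k|)^2)$ precisely because $\rho_n|B_k|\ge\rho_n t_k^2\gtrsim\rho_n(\log n)^2\to\infty$; here the assumed subpolynomial decay of $\rho_n$ from \eqref{rho-n-decay} is what guarantees $\rho_n(\log n)^2\to\infty$ — and, per the Remark after Proposition~\ref{pro22}, this is exactly one of the places that hypothesis is used). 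Chebyshev then gives $S_k\ge\tfrac12\E S_k\ge c_4\rho_n|B_k|$ with probability tending to $1$ as $n\to\infty$ along $\{n_\ell\}$ (using $k$ with $t_k\ge\log n$), and a Borel--Cantelli argument over the countably many relevant pairs $(n,k)$ --- of which there are only polynomially many in $n$ for each $n$, easily summable against the probability of failure which I would arrange to be summable by a slightly more quantitative large-deviation bound on $S_k$ --- upgrades this to an a.s.\ statement, defining $N_5(\omega)$ as the threshold beyond which \eqref{E:density-lower} holds for all admissible $(n,k)$.

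\textbf{Main obstacle.} The delicate point is the lower bound on $\PP\bigl(x\in\scrC_{\infty,\alpha}\mid\AA_n(x)\bigr)$: one must check that insisting on a trap edge right next to $x$ (which forces a whole bouquet of edges at $x$ to be weak) is compatible, with uniformly positive probability, with $x$ belonging to the infinite strong cluster. This is intuitively clear — reroute the connection around the trap — but making it quantitative requires a careful FKG/gluing argument at the renormalised (cube) scale, handling the correlation between the conditioning region and the far-away infinite occupied-cube component; I expect this to be the technically heaviest part, and it is presumably why the authors allot a whole section (Section~\ref{sec5}) to trap-density bounds.
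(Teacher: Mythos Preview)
Your proposal has the right skeleton (first-moment lower bound, concentration, Borel--Cantelli), but there is a genuine gap in the concentration step, and you have misidentified where the real difficulty lies.

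\textbf{The concentration is not justified.} Chebyshev from your variance bound gives at best
\[
\PP\bigl(S_k<\tfrac12\E S_k\bigr)\le \frac{C}{\rho_n|B_k|}\,,
\]
and summing this over $k$ with $t_k\ge\log n$ leaves a term of order $(\rho_n(\log n)^{d/2})^{-1}$ from the smallest admissible~$k$. Summability over $n\in\{n_\ell\}$ would then require $\sum_\ell(\rho_{n_\ell}(\log n_\ell)^{d/2})^{-1}<\infty$, which is \emph{not} implied by \eqref{rho-n-decay}. You acknowledge needing ``a slightly more quantitative large-deviation bound on~$S_k$'', but this is precisely the hard part: your summands $\1_{\AA_n(x)}\1_{\{x\in\scrC_{\infty,\alpha}\}}$ are coupled through the global event $\{x\in\scrC_{\infty,\alpha}\}$, and there is no off-the-shelf exponential inequality for such sums.

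\textbf{The paper's approach is different and resolves exactly this.} Rather than estimating correlations of $\{x\in\scrC_{\infty,\alpha}\}$-events, the paper manufactures \emph{conditional independence}. By Grimmett--Marstrand one fixes a slab width~$M$ so that percolation occurs in each slab $\H_M(3\ell M)$; the corresponding slab cluster $\scrC_{\infty,\alpha}(3\ell M)$ depends only on edges inside that slab. One then restricts to sites $x\in\mathbb S_M$ with $x_1\in3M\Z$ and uses the \emph{oriented} trap event $\AA_n'(x)$ (trap edge $(x-\hat e_1,x-2\hat e_1)$), which depends only on edges \emph{outside} the slab containing~$x$. Conditionally on the $\sigma$-algebra $\mathscr F_M$ generated by all slab configurations, the events $\{\AA_n'(x):x\in\mathbb S_M\cap\bigcup_\ell\scrC_{\infty,\alpha}(3\ell M)\}$ are therefore genuinely i.i.d.\ Bernoulli$(\rho_n')$, and Lemma~\ref{L:5.1} gives exponential concentration directly---more than enough for Borel--Cantelli. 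The number $Q_k$ of available sites is then handled by the Spatial Ergodic Theorem.

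\textbf{Your ``main obstacle'' is not the main obstacle.} The lower bound $\PP(\AA_n(x),\,x\in\scrC_{\infty,\alpha})\ge c\rho_n$ is comparatively easy: the oriented event $\AA_n'(x)$ constrains only edges incident to $x-\hat e_1$ and $x-2\hat e_1$, leaving $2d-1$ edges at~$x$ free; a finite-energy argument (or the slab construction itself) shows that $x$ connects to infinity through those with uniformly positive probability, independently of $\AA_n'(x)$. The decoupling of \emph{dependence} across sites, not the single-site estimate, is the crux.
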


\smallskip
Deferring the proof of these lemmas to the next sections, we observe that the bound on the first moment of~$R_{n,k}$ is now reduced to two lines:

\begin{proofsect}{Proof of \eqref{E:3.13} from Lemma~\ref{L:density-lower}}
The rewrite \eqref{E:ERk-rewrite} and the bounds \eqref{E:HK-lower} and \eqref{E:density-lower} imply the desired estimate with~$C_3(\omega):=c_3c_4\inf_k|B_k|t_k^{-d/2}$ and, e.g., $N_2(\omega):=\texte^{N_4(\omega)}\vee N_5(\omega)$.
\end{proofsect}

Next we turn our attention to the second moment of~$R_{n,k}$. It is not unreasonable to expect that here we will need some form of \emph{upper} bounds on the heat kernel and \emph{upper} bounds on the density of vertices where~$\AA_n$ occurs. Some version of the former is already available:

\begin{lemma}
\label{L:HK-upper}
Let~$d\ge2$. For each~$\alpha\in(0,\alpha_0]$, there is a $\PP_\alpha$-a.s.~finite random variable $C_6=C_6(\omega)$ such that for $\PP_\alpha$-a.e.~$\omega$, 
\begin{equation}
\label{E:HK-upper}
\sup_{x\in\scrC_{\infty,\alpha}(\omega)}\,\hat{\cmss P}_\omega^\ell(0,x)\le\frac{C_6(\omega)}{\ell^{d/2}},\qquad \ell\ge1.
\end{equation}
\end{lemma}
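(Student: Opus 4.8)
The plan is to exploit the fact that, having passed to the coarse-grained walk, all the heavy-tailed ``traps'' have been factored out: $\hat X$ is, up to a bounded-geometry perturbation, a uniformly elliptic random walk on the supercritical cluster $\scrC_{\infty,\alpha}$, for which on-diagonal heat-kernel bounds of the form $\ell^{-d/2}$ are classical (this is essentially the input behind the upper bounds \eqref{E:me} in \cite{BBHK}, going back to \cite{Barlow,Mathieu-Remy}). Concretely, since $\hat X$ is the trace on $\scrC_{\infty,\alpha}$ of the $\pi_\omega$-reversible chain $X$, it is itself reversible with respect to $\pi_\omega$ restricted to $\scrC_{\infty,\alpha}$, and its conductances $\hat\omega_{xy}:=\pi_\omega(x)\hat{\cmss P}_\omega(x,y)$ are symmetric and bounded above by $2d$. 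Its support graph is the graph $\G_\omega'$ of Proposition~\ref{P:perc} (an excursion of $X$ that leaves $\scrC_{\infty,\alpha}$ near $x$ and re-enters near $y$ forces $\scrG_x\cap\scrG_y\ne\emptyset$ along the way), while on the nearest-neighbor edges of $\scrC_{\infty,\alpha}$ that already lie in $\G_\omega$ one has $\hat\omega_{xy}\ge\omega_{xy}\ge\alpha$ because $\hat{\cmss P}_\omega(x,y)$ dominates the probability of a direct step. Restricting the Dirichlet form of $\hat X$ to this spanning subgraph gives exactly the uniformly elliptic cluster walk of \cite{Barlow,Mathieu-Remy}; since enlarging a Dirichlet form (by adding the remaining long edges and self-loops, which leaves $\pi_\omega$ unchanged) only speeds up $L^2$-decay, any Nash or Faber--Krahn inequality for the cluster walk is inherited by $\hat X$.

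From the isoperimetry of $\scrC_{\infty,\alpha}$ one then obtains, for each $x\in\scrC_{\infty,\alpha}$, a deterministic constant $C$ and a random scale $S_x=S_x(\omega)$ with
\begin{equation*}
\hat{\cmss P}_\omega^{2m}(x,x)\le C\,m^{-d/2},\qquad m\ge S_x(\omega),
\end{equation*}
where the law of $S_0$ under $\PP_\alpha$ has a faster-than-polynomial upper tail, translation-covariantly in~$x$. Since trivially $\hat{\cmss P}_\omega^{2m}(x,x)\le1$ for $m<S_x$, this gives for the origin $\hat{\cmss P}_\omega^{2m}(0,0)\le C_0(\omega)\,m^{-d/2}$ for \emph{every} $m\ge1$, with $C_0(\omega):=C\vee S_0(\omega)^{d/2}$ finite $\PP_\alpha$-a.s.; and because $\sum_{x\in\Z^d}\PP_\alpha(S_0>(\log|x|)^2)<\infty$, Borel--Cantelli yields a $\PP_\alpha$-a.s.\ finite $R_0(\omega)$ with $S_x(\omega)\le(\log|x|)^2$ for all $x\in\scrC_{\infty,\alpha}$ with $|x|>R_0(\omega)$. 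Analogously, Proposition~\ref{P:perc}(5) and Borel--Cantelli give a finite $R_1(\omega)$ with $\textd'_\omega(0,x)\ge\xi|x|$ for all such $x$ with $|x|>R_1(\omega)$.

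To convert these on-diagonal facts into the uniform bound \eqref{E:HK-upper} I would use the Cauchy--Schwarz inequality for the symmetric kernel $\hat{\cmss P}_\omega^\ell(\cdot,\cdot)/\pi_\omega(\cdot)$: for every splitting $\ell=s+t$,
\begin{equation*}
\hat{\cmss P}_\omega^{\ell}(0,x)\le\Bigl(\frac{\pi_\omega(x)}{\pi_\omega(0)}\Bigr)^{1/2}\bigl(\hat{\cmss P}_\omega^{2s}(0,0)\bigr)^{1/2}\bigl(\hat{\cmss P}_\omega^{2t}(x,x)\bigr)^{1/2},
\end{equation*}
together with $\pi_\omega(x)\le 2d$ and $\pi_\omega(0)\ge\alpha$. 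If $|x|>R_0(\omega)\vee R_1(\omega)$ and $\ell\ge 2(\log|x|)^2$, choosing $s,t\approx\ell/2$ makes $t\ge S_x(\omega)$, so both diagonal factors are at most a constant times $\ell^{-d/2}$ (the one at the origin absorbing $C_0(\omega)$), which gives $\hat{\cmss P}_\omega^\ell(0,x)\le C''(\omega)\,\ell^{-d/2}$. If instead $\ell<2(\log|x|)^2$, then, since $\hat X$ moves at unit speed in $\G_\omega'$ and $\textd'_\omega(0,x)\ge\xi|x|>2(\log|x|)^2>\ell$ for $|x|$ large, we simply have $\hat{\cmss P}_\omega^\ell(0,x)=0$. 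The finitely many remaining $x$ (those with $|x|\le R_0(\omega)\vee R_1(\omega)$) form an a.s.\ finite set on which the same Cauchy--Schwarz bound, now invoking the on-diagonal estimate at $x$ as well, gives $\hat{\cmss P}_\omega^\ell(0,x)\le C_x(\omega)\,\ell^{-d/2}$ for all $\ell\ge1$; taking $C_6(\omega)$ to be the maximum of $C''(\omega)$ and the finitely many $C_x(\omega)$ completes the proof.

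I expect the genuinely delicate point to be the \emph{uniformity} in $x$ with a single random constant, not the on-diagonal decay, which is standard. The argument sandwiches the relevant time $\ell$ between two scales: below $(\log|x|)^2$ the walk cannot even reach $x$ because the $\G_\omega'$-distance is linear in $|x|$, and above it the on-diagonal bound at $x$ already holds with a \emph{deterministic} constant because the random ``bad scale'' $S_x$ is $\le(\log|x|)^2$ for all large $x$. The only thing to check is that these two windows overlap, which they do precisely because $(\log|x|)^2$ is negligible next to $|x|$; making this quantitative, together with verifying the (stretched-exponential) tail of $S_0$ and the Dirichlet-form comparison of the first step, is where the real work lies, but every ingredient is supplied by Proposition~\ref{P:perc} and by \cite{Barlow,Mathieu-Remy,BBHK}.
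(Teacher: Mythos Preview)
Your argument is essentially sound and supplies far more than the paper does: the paper's ``proof'' is the single line ``This is a restatement of Lemma~3.2 from~\cite{BBHK},'' so you are in effect reconstructing that cited result. The strategy---compare the Dirichlet form of~$\hat X$ with that of the simple random walk on~$\scrC_{\infty,\alpha}$, import the on-diagonal decay at every vertex with a random onset time~$S_x$, then convert to a bound uniform in~$x$ via Cauchy--Schwarz plus a reachability argument for small~$\ell$---is exactly the kind of argument that underlies the result in~\cite{BBHK,BP}.

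Two technical points deserve tightening. First, ``faster-than-polynomial'' tails for~$S_0$ are not enough to make $\sum_x\PP_\alpha(S_x>(\log|x|)^2)$ converge: polynomial-in-$|x|$ many summands against a quantity that is merely $o((\log|x|)^{-k})$ for every~$k$ still diverges. What the literature actually gives is a stretched-exponential tail $\PP_\alpha(S_0>r)\le\texte^{-cr^\delta}$, and then the correct threshold is $(\log|x|)^\theta$ with~$\theta>1/\delta$, exactly as the paper does later in the proof of Lemma~\ref{L:G-upper}. Second, the one-step support of~$\hat X$ is not quite the edge set of~$\G_\omega'$ as defined in Proposition~\ref{P:perc}: if $x,y\in\scrC_{\infty,\alpha}$ are $\Z^d$-neighbours with $\omega_{xy}<\alpha$ and all their other $\Z^d$-neighbours lie in~$\scrC_{\infty,\alpha}$, then $\scrG_x=\scrG_y=\emptyset$ yet $\hat{\cmss P}_\omega(x,y)>0$. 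This is harmless---such a step moves Euclidean distance~$1$, so you can simply enlarge~$\G_\omega'$ by all nearest-neighbour $\Z^d$-edges between points of~$\scrC_{\infty,\alpha}$ without affecting the conclusion of Proposition~\ref{P:perc}(5)---but as written the claim ``$\textd'_\omega(0,x)>\ell\Rightarrow\hat{\cmss P}_\omega^\ell(0,x)=0$'' is not literally true.
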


\begin{proofsect}{Proof}
This is a restatement of Lemma~3.2 from~\cite{BBHK}. 
\end{proofsect}

We will need to boost this into an estimate on the Green's function associated with random walk~$\hat X$. For $x,y\in\scrC_{\infty,\alpha}$ this function is defined by
\begin{equation}
\hat{\cmss G}_\omega(x,y):=\sum_{\ell\ge0}\hat{\cmss P}_\omega^\ell(x,y)
=(1-\hat{\cmss P}_\omega)^{-1}(x,y).
\end{equation}
In order to ease the notation, for any~$\omega$ and any $f,g\colon\Z^d\to\R$ with finite supports, let
\begin{equation}
\langle f,g\rangle_\omega:=\sum_{x\in\scrC_{\infty,\alpha}}f(x)g(x)
\end{equation}
denote the inner product with respect to the counting measure on~$\scrC_{\infty,\alpha}$. (A more natural inner product to consider would be that with respect to measure $\pi_\omega$ restricted to~$\scrC_{\infty,\alpha}$. However, the above is what naturally comes up in our calculations; conversions to other inner products will be the subject of Lemma~\ref{L:G-compare}.)  We will then need:

\begin{lemma}
\label{L:G-upper}
Let $d\ge4$ and~$\alpha\in(0,\alpha_0]$. There are~$c_5<\infty$, $\eta<\infty$ and a $\PP_\alpha$-a.s.\ finite random variable $N_5=N_5(\omega)$ such that for all~$n\ge N_5(\omega)$ and all~$k$ with $t_k\ge\log n$, the function
\begin{equation}
\label{E:fk}
f_k(x):=\1_{\AA_n(x)}\1_{\{x\in B_k^\circ\cap\scrC_{\infty,\alpha}\}}
\end{equation}
obeys
\begin{equation}
\bigl\langle f_k,\hat{\cmss G}_\omega f_k\bigr\rangle_\omega\le
c_5\biggl\{\,(\log n)^\eta\,t_k^{d/2}+
\sum_{\begin{subarray}{c}
x,y\in B_k^\circ\cap\scrC_{\infty,\alpha}\\|x-y|\ge\log n
\end{subarray}}
\frac{\1_{\AA_n(x)}\1_{\AA_n(y)}}{1+|x-y|^{d-2}}\biggr\}.
\end{equation}
\end{lemma}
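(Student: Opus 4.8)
\textbf{Plan of the proof of Lemma~\ref{L:G-upper}.}
The idea is to split the quadratic form $\langle f_k,\hat{\cmss G}_\omega f_k\rangle_\omega$ according to whether the two summation variables are within distance $\log n$ of each other or not, and to estimate the ``diagonal'' part by crude heat-kernel bounds and the ``off-diagonal'' part by a Green's-function decay estimate. Writing out the inner product,
\begin{equation}
\bigl\langle f_k,\hat{\cmss G}_\omega f_k\bigr\rangle_\omega
=\sum_{x,y\in B_k^\circ\cap\scrC_{\infty,\alpha}}\1_{\AA_n(x)}\1_{\AA_n(y)}\,\hat{\cmss G}_\omega(x,y).
\end{equation}
For the near-diagonal contribution $|x-y|<\log n$, I would bound $\1_{\AA_n(x)}\1_{\AA_n(y)}\le\1_{\AA_n(x)}$ and sum over $y$ in a ball of radius $\log n$ around $x$, reducing matters to controlling $\sum_{y:|x-y|<\log n}\hat{\cmss G}_\omega(x,y)$ uniformly in $x$. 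This sum is handled by splitting the time variable in $\hat{\cmss G}_\omega(x,y)=\sum_{\ell\ge0}\hat{\cmss P}_\omega^\ell(x,y)$ at some threshold of order $(\log n)^2$: for $\ell$ below the threshold one uses $\sum_y \hat{\cmss P}_\omega^\ell(x,y)\le1$ (total mass), while for $\ell$ above it one invokes the on-diagonal upper bound \eqref{E:HK-upper} together with the fact that the walk is still essentially confined (after $\ell\sim(\log n)^2$ steps, $\hat X$ has typically moved only distance $\log n$, so the restriction $|x-y|<\log n$ loses only a constant). This yields a per-point bound of order $(\log n)^\eta$ for a suitable power $\eta$, and multiplying by the number of points in $B_k^\circ\cap\scrC_{\infty,\alpha}$, which is at most $|B_k|\le C t_k^{d/2}$, produces the first term $c_5(\log n)^\eta t_k^{d/2}$.

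For the off-diagonal part $|x-y|\ge\log n$, the plan is to show that $\hat{\cmss G}_\omega(x,y)\le c/(1+|x-y|^{d-2})$ for such pairs, which is exactly the transient Green's-function decay one expects in $d\ge3$ (and we are in $d\ge4$). This again comes from integrating the heat kernel in time: $\hat{\cmss G}_\omega(x,y)=\sum_{\ell\ge0}\hat{\cmss P}_\omega^\ell(x,y)$, where for $\ell\lesssim|x-y|^2$ the Gaussian off-diagonal decay (or at least a Carne--Varopoulos-type bound, using property~(5) of Proposition~\ref{P:perc} to control the graph distance $\textd'_\omega$ from below by a multiple of $|x-y|$) makes the contribution negligible, and for $\ell\gtrsim|x-y|^2$ the on-diagonal bound \eqref{E:HK-upper} gives $\hat{\cmss P}_\omega^\ell(x,y)\le C_6(\omega)\ell^{-d/2}$, whose tail sum from $\ell\sim|x-y|^2$ is of order $|x-y|^{2-d}$. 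Combining the two regimes gives the claimed decay, and inserting it into the off-diagonal sum directly yields the second term. Here one must be slightly careful that the Carne--Varopoulos bound is applied on the correct graph: the coarse-grained walk $\hat X$ on $\scrC_{\infty,\alpha}$ has jumps that can be large (when the walk $X$ wanders far into a finite component of the complement), so the relevant length scale is the distance $\textd'_\omega$ on the augmented graph $\G'_\omega$, and property~(5) of Proposition~\ref{P:perc} guarantees that this distance is comparable to the Euclidean distance for far-apart points, with exponentially small probability of exceptional pairs. Since $N_5(\omega)$ can be taken large enough that all relevant pairs within $B_k$ with $|x-y|\ge\log n$ are non-exceptional (a Borel--Cantelli argument over $n$, using $t_k\le n$ and $|B_k|$ polynomial in $n$), this is not a real obstruction.

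The main technical obstacle is the near-diagonal estimate: one needs a bound on $\sum_{y:|x-y|<\log n}\hat{\cmss G}_\omega(x,y)$ that is uniform over $x\in\scrC_{\infty,\alpha}$ and only polylogarithmic in $n$, and this requires knowing that the coarse-grained walk does not spend an anomalously long time in a ball of radius $\log n$ before exiting. The on-diagonal upper bound \eqref{E:HK-upper} alone gives $\sum_{\ell\ge1}\hat{\cmss P}_\omega^\ell(x,y)\le\sum_{\ell\ge1}C_6(\omega)\ell^{-d/2}<\infty$ for $d\ge3$, so in fact $\hat{\cmss G}_\omega(x,y)$ is already bounded by $C_6(\omega)+$const uniformly, and summing over the $O((\log n)^d)$ points $y$ with $|x-y|<\log n$ gives the first term with $\eta=d$ immediately --- so in $d\ge4$ (transient case) even the crude argument suffices and no confinement estimate is actually needed. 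The only remaining care is that $C_6(\omega)$ is a random variable finite only $\PP_\alpha$-a.s., so $c_5$ in the statement must absorb an $\omega$-dependent factor (consistent with $N_5=N_5(\omega)$ being random); one should also double-check that $f_k$ has finite support so that $\langle\cdot,\cdot\rangle_\omega$ and the Green's-function manipulations are legitimate, which holds since $B_k^\circ$ is a finite set. Collecting the two bounds gives the displayed inequality.
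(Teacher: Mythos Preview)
Your near/far split at scale~$\log n$ is exactly the decomposition the paper uses, and your off-diagonal reasoning (Carne--Varopoulos plus tail-sum of the on-diagonal bound) is in the right spirit. But there is a genuine gap in both halves of your argument, and it is the same gap: you are implicitly using heat-kernel and Green's-function bounds that are \emph{uniform in the starting point}, whereas the only input you cite, Lemma~\ref{L:HK-upper}, gives $\hat{\cmss P}_\omega^\ell(0,x)\le C_6(\omega)\ell^{-d/2}$ with the walk started at the \emph{origin}. Your sentence ``$\hat{\cmss G}_\omega(x,y)$ is already bounded by $C_6(\omega)+\text{const}$ uniformly'' does not follow from this: to get $\hat{\cmss G}_\omega(x,y)$ for arbitrary $x\in B_k^\circ$ you would need $\hat{\cmss P}_\omega^\ell(x,\cdot)\le C\ell^{-d/2}$, and shifting the origin replaces $C_6(\omega)$ by $C_6(\tau_x\omega)$, whose maximum over $x\in B_k$ is not controlled without further input (no tail bounds on $C_6$ are stated). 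The same issue arises in your off-diagonal estimate when you sum $\ell^{-d/2}$ from $\ell\sim|x-y|^2$. Note also that the lemma asserts a \emph{deterministic} constant~$c_5$; your proposal to let $c_5$ absorb~$C_6(\omega)$ conflicts with the statement.

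The paper closes this gap by an indirect route: it first proves a quadratic-form comparison (Lemma~\ref{L:G-compare})
\[
\bigl\langle f_k,\hat{\cmss G}_\omega f_k\bigr\rangle_\omega
\le (2d/\alpha)^2\,\bigl\langle f_k,\widetilde{\cmss G}_{\alpha,\omega} f_k\bigr\rangle_\omega,
\]
where $\widetilde{\cmss G}_{\alpha,\omega}$ is the Green's function of the \emph{simple} random walk on the percolation cluster~$\scrC_{\infty,\alpha}$. For that walk, uniform-in-space bounds are available: Barlow--Hambly give $\widetilde{\cmss G}_{\alpha,\omega}(x,y)\le \tilde c_1|x-y|^{2-d}$ once $|x-y|$ exceeds a random scale~$S_x\wedge S_y$ with stretched-exponential tails (Lemma~\ref{L:Barlow}), and a Borel--Cantelli argument then shows $\max_{x\in B_k^\circ}S_x\le(\log t_k)^\theta\le(\log n)^\theta$ for large~$n$, which handles the off-diagonal part with a deterministic constant. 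For the near-diagonal part the paper uses $\widetilde{\cmss G}_{\alpha,\omega}(x,y)\le\widetilde{\cmss G}_{\alpha,\omega}(x,x)^{1/2}\widetilde{\cmss G}_{\alpha,\omega}(y,y)^{1/2}$, Cauchy--Schwarz, and then the Spatial Ergodic Theorem together with $\E_\alpha\widetilde{\cmss G}_{\alpha,\omega}(0,0)<\infty$ (Lemma~\ref{L:G-absolute}) to bound $\sum_{x\in B_k^\circ\cap\scrC_{\infty,\alpha}}\widetilde{\cmss G}_{\alpha,\omega}(x,x)$ by a deterministic constant times~$t_k^{d/2}$, with the randomness pushed into the threshold~$N_5(\omega)$. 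In short, the missing idea in your proposal is precisely this reduction to simple random walk on~$\scrC_{\infty,\alpha}$, where spatially-uniform Green's-function estimates are already in the literature.
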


The proof of Lemma~\ref{L:G-upper} will require some non-trivial manipulations with \emph{off-diagonal} heat-kernel bounds and is therefore also deferred to the next sections. In order to estimate the sum on the right-hand side, we will also need to prove:

\begin{lemma}
\label{lemma-sum-upper}
Let~$d\ge3$ and suppose that \eqref{rho-n-decay} holds for some sequence $\{n_\ell\}_{\ell\ge1}$. Then there is a constant~$c_6<\infty$ and a $\PP$-a.s.\ finite random variable~$N_6=N_6(\omega)$ such that, for all $n\in\{n_\ell\}_{\ell\ge1}$ and all~$k$ with~$t_k\ge\log n$,
\begin{equation}
\sum_{\begin{subarray}{c}
x,y\in B_k^\circ\cap\scrC_{\infty,\alpha}\\|x-y|\ge\log n
\end{subarray}}\frac{\1_{\AA_n(x)}\1_{\AA_n(y)}}{1+|x-y|^{d-2}}
\le c_6\rho_n^2t_k^{1+d/2}.
\end{equation}
\end{lemma}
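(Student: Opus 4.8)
The plan is to take the expectation of the left-hand side under~$\PP$ (conditioned on $0\in\scrC_{\infty,\alpha}$, which costs only a bounded factor), and then pass back to an almost-sure statement via a crude summable-tail / Borel--Cantelli argument. Write $S_{n,k}$ for the sum in question. Since the events $\AA_n(x)$ and $\AA_n(y)$ depend only on edges incident to the $\Z^d$-neighbors of~$x$ and~$y$ respectively, for $|x-y|$ large enough (say $|x-y|\ge 4$, which is guaranteed by $|x-y|\ge\log n$ once $n\gg1$) the random variables $\1_{\AA_n(x)}$ and $\1_{\AA_n(y)}$ are \emph{independent} under~$\PP$. Hence
\begin{equation}
\label{E:sum-upper-exp}
\E\bigl(S_{n,k}\1_{\{0\in\scrC_{\infty,\alpha}\}}\bigr)\le\E(S_{n,k})
\le\PP(\AA_n(0))^2\sum_{\begin{subarray}{c}x,y\in B_k^\circ\\|x-y|\ge\log n\end{subarray}}\frac1{1+|x-y|^{d-2}}.
\end{equation}
The point is now that $\PP(\AA_n(0))$ is comparable to~$\rho_n$: indeed $\AA_n(0)$ requires one incident edge~$b$ with $\omega_b\ge\ffrac12$ and the (at most $2d-1$) other edges incident to the far endpoint to lie in $[\ffrac1n,\ffrac2n]$, so $\PP(\AA_n(0))\le 2d\,\PP(\omega_b\ge\ffrac12)\PP(\ffrac1n\le\omega_b\le\ffrac2n)^{2d-1}$; and $4d-2\ge 2d-1$ together with $\PP(\ffrac1n\le\omega_b\le\ffrac2n)\le1$ gives $\PP(\AA_n(0))\le 2d\,\rho_n^{1/?}$—more carefully, one simply notes that the definition of~$\rho_n$ in \eqref{rho-n} uses the exponent $4d-2$ precisely so that $\PP(\AA_n(0))^2$ and $\rho_n^2$ differ only by a bounded factor depending on~$d$ (one may need $\PP(\AA_n(0))^2\le c\,\rho_n^2$, which holds since each probability in~$\rho_n$ is at most~$1$ and the number of incident edges is bounded). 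The remaining geometric sum is the classical one: $\sum_{z\in\Z^d,\ |z|\ge m}|z|^{-(d-2)}$ diverges, so we must \emph{not} discard the cutoff---instead, for fixed~$x$, $\sum_{y\in B_k^\circ}(1+|x-y|^{d-2})^{-1}\le\sum_{|z|\le\diam(B_k)}|z|^{2-d}\wedge 1\le c\,(\diam B_k)^2\le c'\,t_k$ in $d\ge3$ (the sum over a ball of radius~$r$ of $|z|^{2-d}$ is $O(r^2)$ for $d\ge3$). Summing over $x\in B_k^\circ$ contributes $|B_k^\circ|\le|B_k|\le c\,t_k^{d/2}$, giving $\E(S_{n,k})\le c\,\rho_n^2\,t_k^{1+d/2}$.

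To upgrade this expectation bound to the claimed almost-sure bound (with a random but finite~$N_6(\omega)$), I would apply Markov's inequality and a first Borel--Cantelli argument along the pair $(n,k)$ with $t_k\ge\log n$. A convenient device: it suffices to prove the bound for $n\in\{n_\ell\}_{\ell\ge1}$, and $\rho_n$ is summable in~$n$ (as noted in the Remark after Proposition~\ref{pro22}, from $\rho_n\le\PP(\ffrac1n\le\omega_b\le\ffrac2n)$), so $\sum_\ell\rho_{n_\ell}^2$ converges rapidly; combined with the subpolynomial decay hypothesis $\rho_{n_\ell}\log n_\ell\to\infty$ (which forces $\rho_{n_\ell}\ge(\log n_\ell)^{-1}$ eventually, so there are at most polynomially-in-$n_\ell$ relevant scales~$k$, each of size $\le c\,n_\ell^{c}$), one gets that
\begin{equation}
\label{E:sum-upper-BC}
\sum_{\ell\ge1}\ \sum_{k\colon t_k\ge\log n_\ell}\PP\Bigl(S_{n_\ell,k}>c\,\rho_{n_\ell}^2 t_k^{1+d/2}\,\psi_\ell\Bigr)<\infty
\end{equation}
for a suitable slowly-growing $\psi_\ell$ (even $\psi_\ell\equiv\ell^2$ works, which can then be absorbed into the constant after noting $t_k\ge e^{(\log\log n)^2}$ grows faster than any power of~$\ell$—actually one must be a bit careful and allow $c_6$ to swallow this, or simply prove \eqref{E:sum-upper-BC} with $\psi_\ell\equiv1$ using that the per-scale probabilities are already summable because $\rho_{n_\ell}^2$ is). Then Borel--Cantelli gives a finite $N_6(\omega)$ beyond which $S_{n_\ell,k}\le c_6\rho_{n_\ell}^2 t_k^{1+d/2}$ for all admissible~$k$.

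The main obstacle is the bookkeeping in the Borel--Cantelli step: one needs the total number of pairs $(n_\ell,k)$ with $n_\ell$ in a given range and $t_k$ in the admissible window $[\log n_\ell,\ \text{(at most }n_\ell)]$ to be controlled, and one needs the per-event failure probabilities $\PP(S_{n_\ell,k}>\text{threshold})\le\E(S_{n_\ell,k})/\text{threshold}$ to be summable over this index set. The cleanest route is to observe that the number of admissible~$k$ for a given~$n$ is at most $O(\log n)$ (since $t_k=2^{2k}$ ranges over at most $O(\log n)$ dyadic values below~$n$), and to choose the threshold as $c_6\rho_n^2 t_k^{1+d/2}$ with $c_6$ large enough that $\sum_\ell (\log n_\ell)\,\E(S_{n_\ell,k})/(c_6\rho_{n_\ell}^2 t_k^{1+d/2})\le\sum_\ell(\log n_\ell)\,(c/c_6)$ converges—which it does \emph{not} unless we extract extra decay. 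The resolution is to note we are free to prove the statement with the weaker threshold $c_6\rho_n^2 t_k^{1+d/2}$ only on a subsequence, OR (the route I would actually take) to use that $\rho_n$ decays at most subpolynomially is a hypothesis we may strengthen harmlessly, OR simply to put an extra factor into $N_6$: since for \emph{fixed} large~$\ell$ the finitely many events at scales~$k$ each have probability $\le c/(c_6 \log n_\ell)\cdot\text{(number of }k\text{)}^{-1}$ after choosing $c_6=c_6(\ell)\to\infty$ slowly, one gets summability. In short, the inequality \eqref{E:sum-upper-exp} is the substance; the remaining work is a standard (if slightly fiddly) quantitative Borel--Cantelli that exploits the summability of~$\rho_n$ and the $O(\log n)$ bound on the number of relevant spatial scales, exactly as in the analogous density estimates of~\cite{BP,BBHK}.
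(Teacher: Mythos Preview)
Your first-moment computation is correct: using independence of $\AA_n(x)$ and $\AA_n(y)$ for $|x-y|\ge\log n$, together with $\PP(\AA_n(0))\le c(d)\rho_n$ and the elementary estimate $\sum_{|z|\le r}(1+|z|^{d-2})^{-1}\le c\,r^2$, one indeed gets $\E S_{n,k}\le c\,\rho_n^2\,t_k^{1+d/2}$. The gap is exactly where you locate it, and it is genuine: Markov's inequality yields only
\[
\PP\bigl(S_{n,k}>c_6\,\rho_n^2\,t_k^{1+d/2}\bigr)\le \frac{\E S_{n,k}}{c_6\,\rho_n^2\,t_k^{1+d/2}}\le \frac{c}{c_6},
\]
which is a \emph{constant}, not summable over the doubly-infinite index set $\{(n_\ell,k):t_k\ge\log n_\ell\}$. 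None of your proposed patches rescue this: a slowly growing $\psi_\ell$ or an $\ell$-dependent $c_6$ would change the statement of the lemma (which demands a fixed constant $c_6$ valid for all admissible $(n,k)$ once $n\ge N_6$), and there is no extra decay to extract from the first moment alone. A second-moment/variance bound on $S_{n,k}$ could in principle work, but you have not attempted it, and it would require controlling four-point correlations.

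The paper takes a different route that avoids this problem entirely. Instead of bounding $S_{n,k}$ in expectation and then upgrading, it first proves an \emph{almost-sure uniform density estimate}: for $n$ large (along $\{n_\ell\}$) and all boxes $\Lambda_\ell(x)$ with $x\in\Lambda_n$ and $\tfrac12\log n\le\ell\le n$, one has $\sum_{z\in\Lambda_\ell(x)}\1_{\AA_n(z)}\le 2\,\PP(\AA_n(0))\,|\Lambda_\ell|$. This is obtained by partitioning $\Z^d$ into $6^d$ sublattices (making the $\AA_n(z)$ i.i.d.\ on each), applying the exponential Chebyshev bound of Lemma~\ref{L:5.1} (which gives decay $e^{-\zeta m\theta_n}$ with $m\theta_n\gtrsim(\log n)^2\rho_n\to\infty$), and Borel--Cantelli over the polynomially-many boxes. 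Once this pathwise density bound is in hand, the sum $S_{n,k}$ is controlled \emph{deterministically} by a dyadic decomposition in $|x-y|$: on each dyadic shell $M\le|x-y|\le2M$, the inner sum over $y$ is at most $2\rho_n'|\Lambda_{2M}|$ and the outer sum over $x$ at most $2\rho_n'|\Lambda_{\sqrt{t_k}}|$, and then $\sum_{M\le\sqrt{t_k}}M^d/M^{d-2}\sim t_k$ finishes the job. The concentration input (exponential rather than polynomial tails) is what makes the Borel--Cantelli go through; your first-moment approach lacks precisely this.
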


As we will see in the next section, this will be easy to prove once we have a uniform bound on the density of~$\AA_n$ in large rectangular subsets of~$B_k$. We will now show how these ingredients combine into the upper bound on $E_\omega^0(R_{n,k}^2)$:

\begin{proofsect}{Proof of \eqref{E:3.14} from Lemmas~\ref{L:G-upper}--\ref{lemma-sum-upper}}
Throughout, let us assume that $n\in\{n_\ell\}_{\ell\ge1}$. Note that, by \eqref{E:2.24}, the condition $t_k\ge\log n$ from Lemmas~\ref{L:G-upper}--\ref{lemma-sum-upper} is satisfied. Recall \eqref{E:fk}. Writing~$R_{n,k}^2$ as the sum of $f_k(X_\ell)f_k(X_{\ell'})$ over pairs $\ell,\ell'$ with $t_k\le\ell,\ell'\le 2t_k$, the positivity of all terms permits us to estimate the sum as twice the same sum with~$\ell,\ell'$ now obeying $t_k\le\ell\le\ell'\le2t_k$. Applying the Markov property and reparametrizing by means of~$s:=\ell'-\ell$ yields
\begin{equation}
E_\omega^0\bigl(R_{n,k}^2)\le2\sum_{\ell\ge t_k}\sum_{s\ge0}\,\sum_{x,y\in\scrC_{\infty,\alpha}} \hat{\cmss P}_\omega^\ell(0,x)
\hat{\cmss P}_\omega^{s}(x,y)f_k(x)f_k(y),
\end{equation}
where we also extended the summation ranges of~$\ell$ and $s$ to infinity.
Plugging \eqref{E:HK-upper} for $\hat{\cmss P}_\omega^\ell(0,x)$, the sum over~$\ell$ can be estimated by an integral with the result
\begin{equation}
\label{E:3.16}
E_\omega^0\bigl(R_{n,k}^2)\le 2C_6(\omega)\frac{2}{d-2}\,(t_k-1)^{1-d/2}\bigl\langle f_k,\hat{\cmss G}_\omega f_k\bigr\rangle_\omega.
\end{equation}
Lemmas~\ref{L:G-upper} and \ref{lemma-sum-upper} now tell us that, for $n\ge N_5(\omega)\vee N_6(\omega)$ and $t_k\ge\log n$, the inner product is bounded by $c_5\rho_n(\log n)^\eta t_k^{d/2}+c_5c_6\rho_n^2t_k^{1+d/2}$. Now, by \eqref{E:2.24} we in fact have $(\log n)^{\eta+2}\le t_k$ for~$n\gg1$ and so, by $\rho_n\log n\ge1$ (as implied by \eqref{rho-n-decay}),
\begin{equation}
(\log n)^\eta t_k^{d/2}\le \rho_n^2 (\log n)^{\eta+2} t_k^{d/2}\le\rho_n^2 t_k^{1+d/2}
\end{equation}
once $n$ exceeds some finite $n_0$. Summarizing, 
\begin{equation}
(t_k-1)^{1-d/2}\bigl\langle f_k,\hat{\cmss G}_\omega f_k\bigr\rangle_\omega\le 2c_5(1+c_6)\,\rho_n^2\,t_k^2
\end{equation}
is valid once $n\ge n_0$ and $t_k$ obeys \eqref{E:2.24}. The desired claim thus follows for the choices $N_2(\omega):=N_3(\omega)\vee N_5(\omega)\vee n_0$ and $C_4(\omega):=2c_5C_6(\omega)[1+c_6]$.
\end{proofsect}

\section{Density estimates}
\label{sec5}\noindent
The goal of this section is to derive the necessary estimates concerning the density of occurrences of event~$\AA_n$ in~$B_k\cap\scrC_{\infty,\alpha}$ and thus establish Lemmas~\ref{L:density-lower} and~\ref{lemma-sum-upper}.  Both of these lemmas  will make use of the following claim:

\begin{lemma}
\label{L:5.1}
For numbers~$\theta_n\in(0,1)$, let $Z_{n,1},Z_{n,2},\dots$ be i.i.d.\ Bernoulli random variables with parameter~$\theta_n$. If~$\{n_k\}_{k\ge1}$ is a sequence with $\theta_{n_k}\log n_k\to\infty$ as~$k\to\infty$, then for any~$\epsilon>0$,
\begin{equation}
\label{E:4.4}
\sum_{n\in\{n_k\colon k\ge1\}}\,\,n^d\!\sum_{m\ge\epsilon(\log n)^2}\PP\Bigl(\,\frac1{\theta_n m}\sum_{j=1}^m Z_{n,j}\not\in\bigl(\ffrac12\,,2\bigr)\Bigr)<\infty.
\end{equation}
\end{lemma}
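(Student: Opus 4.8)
The plan is to bound the probability inside \eqref{E:4.4} by a standard large-deviation (Chernoff) estimate for sums of i.i.d.\ Bernoulli variables, and then show that the resulting double sum over $n\in\{n_k\}$ and $m\ge\epsilon(\log n)^2$ converges. First I would note that $S_m:=\sum_{j=1}^m Z_{n,j}$ has mean $\theta_n m$, so the event $\{S_m/(\theta_n m)\not\in(\tfrac12,2)\}$ is the union of $\{S_m\le\tfrac12\theta_n m\}$ and $\{S_m\ge 2\theta_n m\}$. By the relative-entropy form of Chernoff's bound (or the Bennett/Bernstein inequality), each of these probabilities is at most $\exp(-c\,\theta_n m)$ for some absolute constant $c>0$; indeed, for a binomial with mean $\mu=\theta_n m$, $\PP(S_m\le\tfrac12\mu)\le e^{-\mu(1-\log 2)\cdot(1/2)}$-type bounds and $\PP(S_m\ge 2\mu)\le e^{-\mu(2\log 2-1)}$ hold, both giving a decay of the form $e^{-c\theta_n m}$ with, say, $c=\tfrac15$. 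The key point is that $c$ can be taken uniform in $n$ and $m$, so that
\begin{equation}
\PP\Bigl(\,\tfrac1{\theta_n m}\textstyle\sum_{j=1}^m Z_{n,j}\not\in(\tfrac12,2)\Bigr)\le 2\,\texte^{-c\,\theta_n m}.
\end{equation}

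Next I would carry out the sum over $m$. For fixed $n$, summing the geometric-type tail gives
\begin{equation}
\sum_{m\ge\epsilon(\log n)^2}2\,\texte^{-c\theta_n m}\le \frac{2\,\texte^{-c\theta_n\epsilon(\log n)^2}}{1-\texte^{-c\theta_n}}\le \frac{C}{\theta_n}\,\texte^{-c\epsilon\theta_n(\log n)^2},
\end{equation}
using $1-\texte^{-c\theta_n}\ge c'\theta_n$ for $\theta_n\in(0,1)$. Multiplying by $n^d=\texte^{d\log n}$, the $n$-th summand is at most $(C/\theta_n)\exp\bigl(d\log n - c\epsilon\,\theta_n(\log n)^2\bigr)$. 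Now I would invoke the hypothesis $\theta_{n_k}\log n_k\to\infty$: for all large $k$ we have $\theta_{n_k}\log n_k\ge 2d/(c\epsilon)$, hence $c\epsilon\,\theta_{n_k}(\log n_k)^2\ge 2d\log n_k$, so the exponent is at most $-d\log n_k$, i.e.\ the summand is bounded by $(C/\theta_{n_k})\,n_k^{-d}$. Since $\theta_{n_k}\log n_k\to\infty$ forces $\theta_{n_k}$ not to decay too fast — in particular $1/\theta_{n_k}\le \log n_k$ for large $k$ — the summand is eventually at most $C\,n_k^{-d}\log n_k$, which is summable because $d\ge1$ and the $n_k$ are distinct positive integers (so $\sum_k n_k^{-d}\log n_k<\infty$). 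The finitely many small-$k$ terms are each finite, so the whole sum in \eqref{E:4.4} is finite, as claimed.

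\textbf{Main obstacle.} The argument is essentially routine; the only point that needs a little care is making the Chernoff constant $c$ genuinely uniform in $n$ and $m$ (in particular not degenerating as $\theta_n\to0$), and checking that the $\log n$ term in the final bound $n^{-d}\log n$ does not spoil summability — which it does not, since $\sum_{j\ge 2} j^{-d}\log j<\infty$ for every $d\ge1$ and the $n_k$'s, being a sequence of distinct integers, can be compared with $\sum_j j^{-d}\log j$. A secondary bookkeeping subtlety is that $\theta_{n_k}$ is only controlled along the subsequence $\{n_k\}$, which is exactly why the outer sum in \eqref{E:4.4} ranges over $\{n_k\colon k\ge1\}$ rather than all $n$; I would emphasize that the hypothesis is used precisely to beat the factor $n^d$ coming from the (eventual) application of a union bound over $\sim n^d$ sites in $B_k$.
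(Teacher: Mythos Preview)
Your approach is essentially identical to the paper's: a Chernoff/exponential-Chebyshev bound giving $\PP(\cdot)\le 2\texte^{-c\theta_n m}$, followed by summing the geometric tail in~$m$ and using $\theta_{n_k}\log n_k\to\infty$ to beat the polynomial factor~$n^d$. The paper phrases the last step slightly more efficiently by observing that the leading term $\texte^{-c\epsilon\theta_n(\log n)^2}$ decays faster than \emph{any} power of~$n$ (since $\theta_n(\log n)^2=(\theta_n\log n)\cdot\log n$ with the first factor tending to infinity), which makes summability automatic for every~$d$.

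One small slip: your final claim that $\sum_k n_k^{-d}\log n_k<\infty$ ``because $d\ge1$'' is false when $d=1$ and the $n_k$ are all integers (e.g.\ $n_k=k+1$). This is harmless here --- the lemma is only invoked for $d\ge3$, and in any case you can fix it trivially by taking the threshold $\theta_{n_k}\log n_k\ge (d+2)/(c\epsilon)$ instead of $2d/(c\epsilon)$, or by adopting the paper's ``faster than any polynomial'' phrasing --- but it is worth tightening.
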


\begin{proofsect}{Proof}
By the exponential Chebyshev inequality,
\begin{equation}
\PP\Bigl(\,\frac1{\theta_n m}\sum_{j=1}^m Z_{n,j}\not\in\bigl(\ffrac12\,,2\bigr)\Bigr)\le2\texte^{-\zeta m\theta_n},
\end{equation}
where $\zeta:=\min\{3-\texte,\ffrac12-\texte^{-1}\}$. The sum over~$m$ is dominated by its lowest term which, by $\theta_n\log n\to\infty$, tends to zero faster than any polynomial in~$n$.
\end{proofsect}

We begin with the proof of the upper bound which is easier because there one can immediately drop the restriction that the points be contained in the infinite cluster.

\begin{proofsect}{Proof of Lemma~\ref{lemma-sum-upper}}
Suppose~$d\ge3$. Let~$\Lambda_\ell(x)=x+[-\ell,\ell]^d\cap\Z^d$ and abbreviate $\Lambda_\ell:=\Lambda_\ell(0)$. First we claim that, for some $\PP_\alpha$-a.s.\ finite random variable~$N'=N'(\omega)$, 
\begin{equation}
\label{E:supsup}
\sup_{\begin{subarray}{c}
n\in\{n_k\}\\ n\ge N'(\omega)
\end{subarray}}
\,\max_{x\in\Lambda_n}\,\max_{\frac12\log n\le\ell\le n}\,
\frac1{\rho_n'|\Lambda_\ell|}\sum_{z\in\Lambda_\ell(x)}\1_{\AA_n(z)}\le2,
\end{equation}
where~$\rho_n':=\PP(\AA_n(0))$.
To see this, partition~$\Z^d$ into $6^d$-translates of~$(6\Z)^d$ and label these by~$\Z^d_i$, $i=1,\dots,6^d$. Clearly, it suffices to show the above for~$\Lambda_\ell(x)$ replaced by $\Lambda_\ell^i(x):=\Lambda_\ell(x)\cap\Z_i^d$ --- including the normalization --- for each~$i$. Note that the events~$\AA_n(z)$, $z\in\Lambda_\ell^i(x)$, are i.i.d.\ with probability $\rho_n'$. 
Now fix~$n\in\{n_\ell\}_{\ell\ge1}$, set $\theta_n:=\rho_n'$, $m:=|\Lambda_\ell^i|$ and observe that~$m\gg(\log n)^2$ when~$\ell\ge\frac12\log n$. The probability that the maxima over~$x$ and~$\ell$ in \eqref{E:supsup} exceed~$2$ is then bounded by the~$n$-th term in \eqref{E:4.4}. The Borel-Cantelli lemma and \eqref{E:4.4} imply that this will occur only for finitely many~$n\in\{n_\ell\}_{\ell\ge1}$,~$\PP_\alpha$-a.s., thus proving \eqref{E:supsup}.

Now pick~$n\ge N'(\omega)$, use $\D:=\{2^m\colon m\ge0\}$ to denote the set of dyadic integers and consider the sum in the statement of the lemma. 
We have
\begin{equation}
\label{E:5.4}
\sum_{\begin{subarray}{c}
x,y\in B_k\\|x-y|\ge\log n
\end{subarray}}
\frac{\1_{\AA_n(x)}\1_{\AA_n(y)}}{1+|x-y|^{d-2}}
\le\sum_{\begin{subarray}{c}
M\in\D\\\frac12\log n\le M\le\sqrt{t_k}
\end{subarray}}\,\,\,
\sum_{\begin{subarray}{c}
x,y\in B_k\\M\le|x-y|\le2M
\end{subarray}}
\frac{\1_{\AA_n(x)}\1_{\AA_n(y)}}{1+M^{d-2}}.
\end{equation}
For a fixed~$x$, we extend the sum over~$y$ to $y\in\Lambda_{2M}(x)$; since $M\ge\frac12\log n$, the sum of the indicator of ${\AA_n(y)}$ is then less than $2\rho_n'|\Lambda_{2M}|\le2\rho_n'(4M+1)^d$. The summation range of~$x$ can subsequently be extended to $\Lambda_s$ with~$s:=\sqrt{t_k}$, which contains~$B_k$. Invoking \eqref{E:supsup}, this yields
\begin{equation}
\text{r.h.s.~of~\eqref{E:5.4}}
\le
4(\rho_n')^2(2\sqrt{t_k}+1)^d
\sum_{\begin{subarray}{c}
M\in\D\\\frac12\log n\le M\le\sqrt{t_k}
\end{subarray}}\frac{(4M+1)^d}{1+M^{d-2}}.
\end{equation}
It is now easy to check that the right-hand side is of order~$\rho_n^2t_k^{1+d/2}$.
\end{proofsect}

\begin{proofsect}{Proof of Lemma~\ref{L:density-lower}}
Suppose~$d\ge4$. For the lower bound we will invoke some more sophisticated facts about percolation in~$d\ge3$. Let $p:=\PP(\omega_b\ge\alpha)$ and let~$M$ be a dyadic integer such that the bond percolation with parameter~$p$ in the slab 
\begin{equation}
\H_M(\ell):=\{\ell,\ell+1,\dots,\ell+M-1\}\times\Z^{d-1}
\end{equation}
of width~$M$ 
contains an infinite cluster almost surely. The existence of such an $M$ is guaranteed by Grimmett and Marstrand~\cite{Grimmett-Marstrand}. In particular, by the uniqueness of the infinite component in the slab (e.g., via Burton and Keane~\cite{Burton-Keane}) the restriction of~$\scrC_{\infty,\alpha}$ to~$\H_M(\ell)$ contains a unique infinite connected component~$\scrC_{\infty,\alpha}(\ell)$~$\PP$-a.s. Note that~$\scrC_{\infty,\alpha}(\ell)$ is independent of the edges with at least one endpoint outside~$\H_M(\ell)$.

Abbreviate $\mathbb S_M:=\{x=(x_1,\dots,x_d)\in(3\Z)^d\colon x_1\in3M\Z\}$ and let~$\AA_n'(x)$ denote the subset of~$\AA_n(x)$ containing the configurations such that a trap occurs at~$x$ with the trap edge $(y,z)$ such that $y:=x-\hate_1$ and $z:=x-2\hate_1$. (Here $\hate_1:=(1,0,\dots,0)$.) Clearly,
\begin{equation}
\label{E:4.9}
\sum_{x\in B_k^\circ\cap\scrC_{\infty,\alpha}}\1_{\AA_n(x)}
\ge\sum_{\ell\in\Z}\,\sum_{x\in B_k^\circ\cap\mathbb S_M}
\1_{\AA_n'(x)}\1_{\{x\in\scrC_{\infty,\alpha}(3\ell M)\}}.
\end{equation}
A key point of the construction is that, conditional on all infinite clusters $\{\scrC_{\infty,\alpha}(3\ell M)\colon \ell\in\Z\}$, the events $\{\AA_n'(x)\colon x\in\bigcup_{\ell\in\Z}\scrC_{\infty,\alpha}(3\ell M)\cap\mathbb S_M\}$ are i.i.d.\ with probability~$\rho_n':=\PP(\AA'_n(0))$. 

To estimate the right-hand side of \eqref{E:4.9}, let~$\mathscr F_M$ denote the $\sigma$-algebra generated by the restriction of~$\omega$ to the union of slabs $\bigcup_{\ell\in\Z}\H_M(3\ell M)$ and introduce the ($\mathscr F_M$-measurable) quantity
\begin{equation}
Q_k:=\sum_{\ell\in\Z}\,\sum_{x\in B_k^\circ\cap\mathbb S_M}\1_{\{x\in\scrC_{\infty,\alpha}(3\ell M)\}}.
\end{equation}  
Lemma~\ref{L:5.1} and the aforementioned independence yield
\begin{equation}
\sum_{n\in\{n_j\}}\,\sum_{\begin{subarray}{c}
k\colon t_k\ge\log n\\ Q_k\ge(\log n)^2
\end{subarray}}
\PP\biggl(\,\frac1{\rho_n'Q_k}\sum_{\ell\in\Z}\,\sum_{x\in B_k^\circ\cap\mathbb S_M}
\1_{\AA_n'(x)}\1_{\{x\in\scrC_{\infty,\alpha}(3\ell M)\}}\le\frac12\bigg|\mathscr F_M\biggr)<\infty.
\end{equation}
Therefore, in light of the restriction $t_k\ge\log n$, there exists $N_5'=N_5'(\omega)$ such that
\begin{equation}
\begin{aligned}
n\ge N_5',\,n\in\{n_j\},\quad\,\,
\\
t_k\ge\log n,\,Q_k\ge (\log n)^2
\end{aligned}
\qquad\text{imply}\quad
\sum_{x\in B_k^\circ\cap\scrC_{\infty,\alpha}}\1_{\AA_n(x)}
\ge \frac12\rho_n Q_k.
\end{equation}
But the Spatial Ergodic Theorem yields $Q_k/|B_k^\circ|\to\psi\in(0,1)$, where $\psi$ is $(1/3)^{d-1}$ of the (non-random) density of $\scrC_{\infty,\alpha}(0)$ in the hyperplane $\{x\in\Z\colon x_1=0\}$. Hence, there is $N_5''=N_5''(\omega)$ such that $n\ge N_5''$ and $t_k\ge\log n$ forces $Q_k\ge\frac12\psi|B_k|$ and (by $d>2$) also $Q_k\ge (\log n)^2$. Noting that $\rho_n\le2d\rho_n'$, the claim follows with $N_5:=N_5'\vee N_5''$ and $c_4:=\frac14\psi/(2d)$.
\end{proofsect}

\section{Heat-kernel input: upper bound}
\label{sec6}\noindent
Here we establish the first part of the claims involving heat kernel bounds that are needed in the proof of Proposition~\ref{prop-moments}. Specifically, we will give the proof of Lemma~\ref{L:G-upper}. The strategy is to convert this to the same problem for the simple random walk on the supercritical percolation cluster. For this random walk we can apply existing results obtained earlier by Biskup and Prescott~\cite{BP} and Barlow and Hambly~\cite{Barlow-Hambly}. 

Given $\alpha\in(0,\alpha_0]$, let us regard $\scrC_{\infty,\alpha}$ as a graph with edge set inherited from $\{b\in\B^d\colon\omega_b\ge\alpha\}$. Let $\widetilde{\cmss P}_{\alpha,\omega}$ denote the transition probability for the simple random walk on~$\scrC_{\infty,\alpha}(\omega)$ which is the Markov chain for the conductances that are set to one for edges in~$\scrC_{\infty,\alpha}$ and to zero otherwise. Let $\widetilde{\cmss G}_{\alpha,\omega}(x,y):=(1-\widetilde{\cmss P}_{\alpha,\omega})^{-1}(x,y)$ be the Green's function for the transition kernel $\widetilde{\cmss P}_{\alpha,\omega}$.

\begin{lemma}
\label{L:Barlow}
Let~$d\ge3$ and $\alpha\in(0,\alpha_0]$. Then for $\PP_\alpha$-a.e.~$\omega$,
\begin{equation}
\label{E:Barlow}
\widetilde{\cmss G}_{\alpha,\omega}(x,y)\le\frac{\tilde c_1}{|x-y|^{d-2}}\quad\text{if}\quad |x-y|>S_x\wedge S_y,
\end{equation}
where $\{S_x(\omega)\colon x\in\scrC_{\infty,\alpha}\}$ are random variables satisfying
\begin{equation}
\PP\bigl(S_x\ge r\big|x\in\scrC_{\infty,\alpha}\bigr)\le\texte^{-\tilde c_2 r^\delta},\qquad r>0,
\end{equation}
for some constants $\tilde c_1,\tilde c_2,\delta\in(0,\infty)$.
\end{lemma}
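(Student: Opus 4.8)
\begin{proofsect}{Proof (plan)}
The plan is to deduce the bound from the quenched off-diagonal heat-kernel upper bound for the simple random walk on a supercritical percolation cluster, due to Barlow~\cite{Barlow} and used in the form below by Biskup and Prescott~\cite{BP} and Barlow and Hambly~\cite{Barlow-Hambly}. Since for $\alpha\in(0,\alpha_0]$ the set $\scrC_{\infty,\alpha}$ is the unique infinite cluster of a supercritical bond percolation (Proposition~\ref{P:perc}), those results apply directly to $\widetilde{\cmss P}_{\alpha,\omega}$: there are constants $\tilde c_3,\tilde c_4,\delta\in(0,\infty)$ and, for $\PP_\alpha$-a.e.~$\omega$, random variables $\{S_x(\omega)\colon x\in\scrC_{\infty,\alpha}\}$ obeying $\PP(S_x\ge r\mid x\in\scrC_{\infty,\alpha})\le\texte^{-\tilde c_2 r^\delta}$ such that, for all $x,y\in\scrC_{\infty,\alpha}$ and all $n\ge|x-y|\vee S_x$,
\begin{equation}
\label{E:Barlow-HK}
\widetilde{\cmss P}_{\alpha,\omega}^n(x,y)\le\frac{\tilde c_3}{n^{d/2}}\,\texte^{-\tilde c_4|x-y|^2/n}.
\end{equation}
Passing to the symmetric kernel obtained by dividing $\widetilde{\cmss P}_{\alpha,\omega}^n(x,y)$ by the degree of~$y$ in $\scrC_{\infty,\alpha}$, and noting that all such degrees lie in $\{1,\dots,2d\}$, one sees that \eqref{E:Barlow-HK}, at the cost of inflating $\tilde c_3$, holds also under the weaker restriction $n\ge|x-y|\vee(S_x\wedge S_y)$; this is the version I would use. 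Enlarging each $S_x$ by an absolute constant so that $S_x\ge1$ only changes $\tilde c_2$, and one may take $\delta$ and $\tilde c_2$ directly from Barlow's tail bound.

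With this in hand I would write $\widetilde{\cmss G}_{\alpha,\omega}(x,y)=\sum_{n\ge0}\widetilde{\cmss P}_{\alpha,\omega}^n(x,y)$ and split the sum according to the size of $n$ relative to $R:=|x-y|$, assuming throughout that $R>S_x\wedge S_y$ so that \eqref{E:Barlow-HK} is available for every $n\ge R$. The terms with $n<R$ vanish: a path in $\scrC_{\infty,\alpha}\subset\Z^d$ of length $n$ joins points at $\ell^1$-distance at most $n$, hence at Euclidean distance at most $n<R$. For $R\le n\le R^2$ one bounds $\sum_{R\le n\le R^2}\tilde c_3\,n^{-d/2}\texte^{-\tilde c_4 R^2/n}$ by the corresponding integral and substitutes $n=R^2/u$, obtaining $R^{2-d}\int_1^R u^{d/2-2}\texte^{-\tilde c_4 u}\,\textd u=O(R^{2-d})$, the integral over $(1,\infty)$ being finite for every $d$. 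For $n>R^2$ the Gaussian factor is at most $\texte^{-\tilde c_4}$, so that part of the sum is at most $\tilde c_3\texte^{-\tilde c_4}\sum_{n>R^2}n^{-d/2}=O(R^{2-d})$, the series converging precisely because $d\ge3$ gives $d/2>1$. Adding the three contributions yields $\widetilde{\cmss G}_{\alpha,\omega}(x,y)\le\tilde c_1|x-y|^{2-d}$ for a suitable $\tilde c_1=\tilde c_1(d,\alpha)$, which is the claim.

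No step here is genuinely hard once \eqref{E:Barlow-HK} is quoted; the only points demanding a little care are (i) relaxing the validity of the Gaussian bound from a condition on the starting point to the symmetric condition $n\ge R\vee(S_x\wedge S_y)$ via reversibility and bounded degrees, and (ii) the bookkeeping that turns the two non-trivial summation ranges into the single power $R^{2-d}$. The hypothesis $d\ge3$ is used --- and is needed --- exactly to make the tail series $\sum_n n^{-d/2}$ converge.
\end{proofsect}
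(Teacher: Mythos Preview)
Your argument is correct. The paper's own proof is a one-line citation: ``This is a restatement of the upper bound from Theorem~1.2 of Barlow and Hambly~\cite{Barlow-Hambly},'' so the Green's function bound is quoted as an already-proved black box. You instead go one level deeper, pulling in Barlow's off-diagonal heat-kernel bound \eqref{E:Barlow-HK} and summing it over~$n$ to reconstruct~$\widetilde{\cmss G}_{\alpha,\omega}$; this is essentially the computation underlying the cited result. Your three-range split ($n<R$, $R\le n\le R^2$, $n>R^2$) and the substitution $n=R^2/u$ are standard and correct, and the use of reversibility plus bounded degrees to symmetrize the validity condition to $n\ge R\vee(S_x\wedge S_y)$ is a clean way to handle point~(i). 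The trade-off is clear: the paper's route is shorter and defers all work to the literature, while yours is self-contained modulo the heat-kernel input and makes transparent exactly where $d\ge3$ enters (convergence of $\sum n^{-d/2}$) --- a point the citation leaves implicit.
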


\begin{proofsect}{Proof}
This is a restatement of the upper bound from Theorem~1.2 of Barlow and Hambly~\cite{Barlow-Hambly}.
\end{proofsect}

In addition we will need to make the following observation:

\begin{lemma}
\label{L:G-absolute}
Let $d\ge3$ and $\alpha\in(0,\alpha_0]$. Then
\begin{equation}
\E_\alpha \widetilde{\cmss G}_{\omega,\alpha}(0,0)<\infty.
\end{equation}
\end{lemma}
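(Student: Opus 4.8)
\textbf{Proof plan for Lemma~\ref{L:G-absolute}.}
The goal is to show that the expected return Green's function of the simple random walk on the supercritical percolation cluster $\scrC_{\infty,\alpha}$, namely $\E_\alpha\widetilde{\cmss G}_{\omega,\alpha}(0,0)$, is finite. The first observation is that $\widetilde{\cmss G}_{\omega,\alpha}(0,0)$ is purely a return quantity, so by the standard renewal identity for Markov chains it equals $1/\PP_\omega^0(\text{no return to }0)$, i.e.\ the reciprocal of the escape probability of the SRW from $0$ on $\scrC_{\infty,\alpha}$. Thus it suffices to bound $\E_\alpha\bigl[1/p_{\mathrm{esc}}(\omega)\bigr]$ where $p_{\mathrm{esc}}(\omega)$ is the probability that the walk started at $0$ never returns. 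By the usual electrical-network interpretation, $p_{\mathrm{esc}}(\omega)=\mathcal{C}_{\mathrm{eff}}(0\leftrightarrow\infty)/\deg_{\scrC_{\infty,\alpha}}(0)$, where $\mathcal{C}_{\mathrm{eff}}$ is the effective conductance from $0$ to infinity in the unit-conductance network on $\scrC_{\infty,\alpha}$ and $\deg$ is the degree of $0$ inside the cluster; since $\deg\le 2d$, it is enough to produce a lower bound on $\mathcal{C}_{\mathrm{eff}}(0\leftrightarrow\infty)$ whose reciprocal has finite $\PP_\alpha$-expectation.

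The effective conductance, in turn, can be bounded below by exhibiting a single unit flow from $0$ to infinity of finite energy (Thomson's principle): $\mathcal{C}_{\mathrm{eff}}(0\leftrightarrow\infty)\ge 1/\mathcal{E}(\theta)$ for any unit flow $\theta$. The cleanest route is to build such a flow using the geometric regularity of $\scrC_{\infty,\alpha}$ supplied by Proposition~\ref{P:perc} together with the coarse-graining in its proof: since the occupied-cube process dominates supercritical site percolation, one can route the flow along the infinite occupied-cube component, spreading it out over the $\Z^d$-many ``good'' directions so that the amount of flow through a cube at graph-distance $r$ from $0$ decays like $r^{-(d-1)}$. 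In $d\ge3$ this gives energy $\sum_r r^{d-1}\cdot r^{-2(d-1)} = \sum_r r^{-(d-1)}<\infty$, but the sum's finiteness is not uniform in $\omega$: the detour one must make to reach the infinite occupied-cube component from $0$, and the local irregularities of the cluster near $0$, introduce a random but finite prefactor. Concretely I would write $\mathcal{E}(\theta)\le C\,\Psi(\omega)$ where $\Psi(\omega)$ depends only on the cluster structure in a ball of random radius $L(\omega)$ around $0$ (the radius needed to ``escape'' into the regular regime), and $L(\omega)$ has exponential tails by the exponential-tail control on finite components of the complement in Proposition~\ref{P:perc}. Then $\E_\alpha\Psi<\infty$ follows.

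There are two standard shortcuts worth mentioning that may shorten the write-up considerably. First, Lemma~\ref{L:HK-upper} already gives $\widetilde{\cmss P}^\ell_{\alpha,\omega}(0,0)$-type bounds for the coarse-grained walk, but here we want the SRW on $\scrC_{\infty,\alpha}$ itself; the analogous on-diagonal upper bound $\widetilde{\cmss P}^\ell_{\alpha,\omega}(x,x)\le \widetilde C_6(\omega)\ell^{-d/2}$ for the SRW on the supercritical cluster is exactly the content of Barlow's heat-kernel estimates (the source underlying Lemma~\ref{L:Barlow}/\cite{Barlow-Hambly}), and summing it over $\ell\ge1$ gives $\widetilde{\cmss G}_{\omega,\alpha}(0,0)\le 1+\widetilde C_6(\omega)\sum_{\ell\ge1}\ell^{-d/2}$, which is finite for $d\ge3$; then one only needs $\E_\alpha\widetilde C_6<\infty$, and the tail bounds on the relevant random constants $S_x$ (or equivalently on the radius beyond which Gaussian bounds hold) from \cite{Barlow,Barlow-Hambly} give this immediately. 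Second, and even more directly, Lemma~\ref{L:Barlow} itself already asserts $\widetilde{\cmss G}_{\alpha,\omega}(x,y)\le \tilde c_1/|x-y|^{d-2}$ once $|x-y|>S_x\wedge S_y$; restricting to the diagonal is not covered by that display, but the companion near-diagonal statement in \cite{Barlow-Hambly} bounds $\widetilde{\cmss G}_{\alpha,\omega}(0,0)$ by a constant depending polynomially on $S_0$, so finiteness of all moments of $S_0$ (from the stretched-exponential tail in Lemma~\ref{L:Barlow}) closes the argument. The main obstacle, in any of these approaches, is purely bookkeeping: verifying that the random prefactor one extracts genuinely depends only on a local neighborhood of $0$ of random-but-integrable radius, so that the $\PP_\alpha$-expectation can be controlled; I expect to spend the bulk of the proof on that localization rather than on the energy/flow estimate, which is classical.
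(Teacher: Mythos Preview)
Your first ``shortcut'' is exactly the paper's proof: write $\widetilde{\cmss G}_{\omega,\alpha}(0,0)$ as an integral of the continuous-time heat kernel $\widetilde{\cmss q}_t(0,0)$, bound the integrand by a random constant times $t^{-d/2}$ for $t\ge1$, and show that constant has finite expectation. The only difference is the source of the moment bound: rather than appealing to the $S_x$ of Barlow--Hambly, the paper invokes \cite[Proposition~6.1]{BP}, which bounds $K_1(\omega):=\sup_{t\ge1}t^{d/2}\widetilde{\cmss q}_t(0,0)$ by a constant times $C_{\text{iso}}(0)^{-d}$, where $C_{\text{iso}}(0)^{-1}$ is an isoperimetric profile constant defined in \cite[Eq.~(6.5)]{BP} with stretched-exponential tails. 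That gives $\E_\alpha K_1<\infty$ directly, and
\[
\widetilde{\cmss G}_{\omega,\alpha}(0,0)=d_\omega(0)\int_0^\infty\widetilde{\cmss q}_t(0,0)\,\textd t\le 1+2dK_1(\omega)\int_1^\infty t^{-d/2}\,\textd t
\]
finishes it in one line. So the ``bookkeeping'' you anticipate is already packaged in \cite{BP}, and no additional localization argument is needed.

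Your lead approach via Thomson's principle and a hand-built flow is a legitimate alternative and would also yield the result, but it is strictly more work here: you would have to redo, in effect, a version of the isoperimetric/regularity input that \cite{BP} (or \cite{Barlow}) already provides, solely to control the random prefactor $\Psi(\omega)$. The heat-kernel route is shorter precisely because that input is citable off the shelf. Your third option (near-diagonal companion to Lemma~\ref{L:Barlow}) would also work but again requires digging a specific statement out of \cite{Barlow-Hambly} that is less clean than the \cite{BP} bound.
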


\begin{proofsect}{Proof}
We will plug into explicit expressions derived in~\cite{BP}. Let us use
\begin{equation}
\label{E:degree}
d_\omega(x):=\sum_{y\colon |y-x|=1}\1_{\{\omega_{xy}\ge\alpha\}}
\end{equation}
to denote the degree of~$x$ in the graph~$\scrC_{\infty,\alpha}(\omega)$. Let
\begin{equation}
\widetilde{\cmss q}_t(x,y):=\frac1{d_\omega(y)}\sum_{n\ge0}\,\frac{t^n}{n!}\texte^{-t}\,\widetilde{\cmss P}_\omega^n(x,y)
\end{equation}
denote the continuous-time heat kernel associated with the simple random walk on~$\scrC_{\infty,\alpha}.$ We claim that the random variable
\begin{equation}
K_1(\omega):=\sup_{t\ge1}\,t^{d/2}\widetilde{\cmss q}_t(0,0)
\end{equation}
satisfies $\E K_1<\infty$. This is seen as follows: By way of our assumption $c(d)\PP(\omega_b\ge\alpha)<1$ it is not hard to check that the random variable $C_{\text{iso}}(0)^{-1}$ defined in \cite[Eq.\ (6.5)]{BP} has a stretched exponential tail and thus has all positive moments. By \cite[Proposition~6.1]{BP}, $K_1(\omega)$ is bounded by a constant times $C_{\text{iso}}(0)^{-d}$ and so $K_1(\omega)$ has all moments as well. The observation
\begin{equation}
\widetilde{\cmss G}_{\omega,\alpha}(0,0)=d_\omega(0)\int_0^\infty\widetilde{\cmss q}_t(0,0)\,\textd t
\le 1+2dK_1(\omega)\int_1^\infty t^{-d/2}\textd t
\end{equation}
now proves the claim.
\end{proofsect}

We will use this in conjunction with the following comparison statement:

\begin{lemma}
\label{L:G-compare}
Suppose $d\ge3$ and let $\alpha\in(0,\alpha_0]$. Then for $\PP_\alpha$-a.e.~$\omega$ and any (positive!) function $f\colon\scrC_{\infty,\alpha}(\omega)\to[0,\infty)$ with finite support,
\begin{equation}
\label{E:Green-compare}
\bigl\langle f,\hat{\cmss G}_\omega f\bigr\rangle_\omega\le\Bigl(\frac{2d}\alpha\Bigr)^2\bigl\langle f,\widetilde{\cmss G}_{\alpha,\omega} f\bigr\rangle_\omega.
\end{equation}
\end{lemma}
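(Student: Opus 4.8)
The claim compares two Green's functions on the same vertex set $\scrC_{\infty,\alpha}$: the one for the \emph{coarse-grained} walk $\hat X$ (which only records visits of $X$ to $\scrC_{\infty,\alpha}$) against the one for the \emph{simple} random walk $\widetilde{\cmss P}_{\alpha,\omega}$ on $\scrC_{\infty,\alpha}$ with unit conductances. The natural route is to exploit that both chains are reversible, to pass to the quadratic-form (Dirichlet-form) picture, and to show that the Dirichlet form of $\hat{\cmss P}_\omega$ dominates a constant multiple of that of $\widetilde{\cmss P}_{\alpha,\omega}$, since domination of Dirichlet forms reverses under inversion of $(1-P)$ and hence yields the stated inequality for $\langle f,\hat{\cmss G}_\omega f\rangle$ versus $\langle f,\widetilde{\cmss G}_{\alpha,\omega} f\rangle$.

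**Key steps.** First I would identify the reversing measures. The coarse-grained walk $\hat X$ is reversible with respect to $\pi_\omega$ restricted to $\scrC_{\infty,\alpha}$: indeed $\hat X$ arises from $X$ by a time change (watching only the excursions back to $\scrC_{\infty,\alpha}$), and a time change of a reversible chain is reversible with the same measure restricted to the observed set — the conductance $\hat c_\omega(x,y):=\pi_\omega(x)\hat{\cmss P}_\omega(x,y)$ is symmetric. The simple walk $\widetilde{\cmss P}_{\alpha,\omega}$ is reversible for the degree measure $d_\omega(\cdot)$ with unit edge conductances $\widetilde c(x,y)=\1_{\{\omega_{xy}\ge\alpha\}}$. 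Second, I would compare the two conductances edge by edge (and, more importantly, at the level of the full Dirichlet form, since $\hat X$ has ``long-range'' jumps across removed finite components). The effective conductance $\hat c_\omega(x,y)$ between $x,y\in\scrC_{\infty,\alpha}$ is at least the conductance of any single path joining them through $\{b\colon\omega_b\ge\alpha\}$; in particular for a $\Z^d$-nearest-neighbor edge $\{x,y\}$ with $\omega_{xy}\ge\alpha$ one has $\hat c_\omega(x,y)\ge\omega_{xy}\ge\alpha\,\widetilde c(x,y)$. This gives, on indicator-of-edge test functions and then by the standard monotonicity of Dirichlet forms under increasing the conductances,
\begin{equation}
\EE_{\hat{\cmss P}_\omega}(g,g)\;\ge\;\alpha\,\EE_{\widetilde{\cmss P}_{\alpha,\omega}}(g,g)\qquad\text{for all }g,
\end{equation}
where $\EE_P(g,g):=\sum_{x,y}c(x,y)\,(g(x)-g(y))^2$ for the relevant conductances. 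Third, I would convert this into the Green's-function inequality: writing $\hat{\cmss G}_\omega=(1-\hat{\cmss P}_\omega)^{-1}$ acting on $\ell^2(\pi_\omega)$ and $\widetilde{\cmss G}_{\alpha,\omega}=(1-\widetilde{\cmss P}_{\alpha,\omega})^{-1}$ on $\ell^2(d_\omega)$, the bilinear forms $\langle f,\hat{\cmss G}_\omega f\rangle_{\pi_\omega}$ and $\langle f,\widetilde{\cmss G}_{\alpha,\omega}f\rangle_{d_\omega}$ have the variational representations
\begin{equation}
\langle f,\hat{\cmss G}_\omega f\rangle_{\pi_\omega}=\sup_g\Bigl\{2\langle f,g\rangle-\EE_{\hat{\cmss P}_\omega}(g,g)-\|g\|^2_{\text{killing}}\Bigr\},
\end{equation}
and similarly for $\widetilde{\cmss G}$; since the (transient!) walks have no killing on $\scrC_{\infty,\alpha}$, the Dirichlet-form domination $\EE_{\hat{\cmss P}_\omega}\ge\alpha\,\EE_{\widetilde{\cmss P}_{\alpha,\omega}}$ plus $\pi_\omega\le 2d$ and $d_\omega\ge1$ on $\scrC_{\infty,\alpha}$ lets me bound the sup for $\hat{\cmss G}$ by that for $\tfrac1\alpha\widetilde{\cmss G}$, absorbing the measure discrepancy into the constant $(2d/\alpha)^2$. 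Finally I would reconcile the inner product $\langle\cdot,\cdot\rangle_\omega$ (counting measure, as defined in the excerpt) with the $\pi_\omega$- and $d_\omega$-weighted ones, which only costs the factors $1\le\pi_\omega(x)\le 2d$ and $1\le d_\omega(x)\le 2d$, giving exactly $(2d/\alpha)^2$.

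**Main obstacle.** The subtle point is handling the ``long jumps'' of $\hat X$: $\hat{\cmss P}_\omega(x,y)$ can be nonzero for $x,y$ that are far apart in $\Z^d$ (joined only through a removed finite component $\scrF_v$), so the comparison is \emph{not} a naive edge-by-edge inequality in the opposite direction. What saves the argument is that we want a \emph{lower} bound on $\EE_{\hat{\cmss P}_\omega}$, i.e.\ an \emph{upper} bound on $\hat{\cmss G}_\omega$, and for that it suffices that $\hat c_\omega$ dominates $\alpha\,\widetilde c$ on the $\widetilde c$-edges — the extra long-range conductances of $\hat X$ only increase $\EE_{\hat{\cmss P}_\omega}$, hence only decrease $\hat{\cmss G}_\omega$, so they are harmless and in fact help. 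The positivity hypothesis on $f$ is presumably there to avoid sign issues when splitting $f$ across the weighted-versus-counting-measure conversions; I would use it to keep all cross terms nonnegative. Making the ``no killing / transience'' justification of the variational formula clean (alternatively: argue directly via the operator inequality $(1-\hat{\cmss P}_\omega)\succeq \alpha(1-\widetilde{\cmss P}_{\alpha,\omega})$ as forms and invert, noting both operators are positive on the transient component) is the one place demanding care.
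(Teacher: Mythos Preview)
Your approach is essentially the same as the paper's: the heart of both is the Dirichlet-form comparison $\EE_{\hat{\cmss P}_\omega}(g,g)\ge\alpha\,\EE_{\widetilde{\cmss P}_{\alpha,\omega}}(g,g)$, which holds because $\hat c_\omega(x,y)=\pi_\omega(x)\hat{\cmss P}_\omega(x,y)\ge\omega_{xy}\ge\alpha\,\widetilde c(x,y)$ on every nearest-neighbor strong edge, and the extra long-range jumps of~$\hat X$ only help. Where you and the paper diverge is in how to convert this into a Green's-function inequality given that $\hat{\cmss P}_\omega$ and $\widetilde{\cmss P}_{\alpha,\omega}$ are self-adjoint for \emph{different} measures ($\pi_\omega$ versus $d_\omega$). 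You propose to absorb the mismatch via the variational formula and the pointwise bounds on $\pi_\omega,d_\omega$; this can be made to work, but the step where you compare $\langle f,g\rangle_{\pi_\omega}$ to $\langle f,g\rangle_{d_\omega}$ inside a supremum over~$g$ requires knowing the optimizer is nonnegative (true, since $f\ge0$ and the Green's kernels are nonnegative) and still leaves some bookkeeping. The paper sidesteps this by introducing an explicit \emph{intermediate} chain $\overline{\cmss P}_\omega$: the simple walk on $\scrC_{\infty,\alpha}$ slowed down by a geometric holding time with parameter $\alpha d_\omega(x)/\pi_\omega(x)$ at each site. This chain has conductances $\alpha\widetilde c(x,y)$ but is reversible for~$\pi_\omega$, so the operator inequality $1-\hat{\cmss P}_\omega\ge 1-\overline{\cmss P}_\omega$ lives cleanly on $\ell^2(\pi_\omega)$ and inverts to $\hat{\cmss G}_\omega\le\overline{\cmss G}_\omega$ there; the exact identity $\overline{\cmss G}_\omega(x,y)=\widetilde{\cmss G}_{\alpha,\omega}(x,y)\,\pi_\omega(y)/(\alpha d_\omega(y))$ then handles the measure change pointwise, with positivity of~$f$ used only to pass between counting and $\pi_\omega$-weighted inner products. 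One small slip in your write-up: on $\scrC_{\infty,\alpha}$ one has $\alpha\le\pi_\omega(x)\le2d$, not $1\le\pi_\omega(x)$.
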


\begin{proofsect}{Proof}
Informally, the comparison between the quadratic forms in \eqref{E:Green-compare} is a consequence of the fact that the Green's function, being the inverse of the generator of the Markov chain, is non-increasing, as an operator, in the conductances. Nonetheless, as the stationary measures for the two chains are different, the only way we can see how to employ this observation rigorously is by way of introducing an intermediate Markov chain.

Recall our notation $d_\omega(x)$ for the degree of~$x$ in~$\scrC_{\infty,\alpha}(\omega)$ and consider the following transition kernel on $\scrC_{\infty,\alpha}$:
\begin{equation}
\label{bar-P}
\overline{\cmss P}_\omega(x,y):=
\begin{cases}
\pi_\omega(x)^{-1}\alpha\1_{\{\omega_{xy}\ge\alpha\}},\qquad&\text{if }|x-y|=1,
\\*[1mm]
1-\pi_\omega(x)^{-1}\alpha d_\omega(x),\qquad&\text{if }x=y,
\\*[1mm]
0,\qquad&\text{otherwise}.
\end{cases}
\end{equation}
The distinction compared to the Markov chain described by $\widetilde{\cmss P}_\omega$ is that this chain is delayed at each~$x$ for a time that is geometrically distributed with parameter $\alpha d_\omega(x)\pi_\omega(x)^{-1}$. As the Green's function counts the expected number of visits to a given point, the Green's function $\overline{\cmss G}_\omega$ corresponding to $\overline{\cmss P}_\omega$ satisfies
\begin{equation}
\label{E:6.6}
\overline{\cmss G}_\omega(x,y)=\widetilde{\cmss G}_{\alpha,\omega}(x,y)\,\frac{\pi_\omega(y)}{\alpha d_\omega(y)}.
\end{equation}
(This can be also checked directly from $(1-\overline{\cmss P}_\omega)(x,y)=\alpha d_\omega(x)\pi_\omega(x)^{-1}(1-\widetilde{\cmss P}_\omega)(x,y)$ as implied by \eqref{bar-P}.)
The reason for consideration of $\overline{\cmss P}_\omega$ is that, unlike $\widetilde{\cmss P}_\omega$, this chain is stationary and reversible with respect to~$\pi_\omega$. As a consequence of the easy operator bound
\begin{equation}
\label{E:6.11a}
1-\hat{\cmss P}_\omega\ge1-\overline{\cmss P}_\omega\qquad\text{ on }\ell^2(\scrC_{\infty,\alpha},\pi_\omega),
\end{equation}
we thus have $\hat{\cmss G}_\omega\le\overline{\cmss G}_\omega$ on $\ell^2(\scrC_{\infty,\alpha},\pi_\omega)$. Using $\langle f,g\rangle_{\pi_\omega}$ to abbreviate the canonical inner product in $\ell^2(\scrC_{\infty,\alpha},\pi_\omega)$, for positive functions $f$ we now get
\begin{multline}
\qquad\quad
\bigl\langle f,\hat{\cmss G}_\omega f\bigr\rangle_\omega
\le\frac1\alpha\bigl\langle f,\hat{\cmss G}_\omega f\bigr\rangle_{\pi_\omega}
\le\frac1\alpha\bigl\langle f,\overline{\cmss G}_\omega f\bigr\rangle_{\pi_\omega}
\\
\le\frac{2d}{\alpha^2}\bigl\langle f,\widetilde{\cmss G}_{\alpha,\omega} f\bigr\rangle_{\pi_\omega}
\le\Bigl(\frac{2d}\alpha\Bigr)^2\bigl\langle f,\widetilde{\cmss G}_{\alpha,\omega} f\bigr\rangle_{\omega},
\qquad\quad
\end{multline}
where we used $\alpha\le\pi_\omega\le2d$, $d_\omega(x)\ge1$ and \eqref{E:6.6} to get the first, third and last inequalities and \eqref{E:6.11a} to get the second inequality.
\end{proofsect}

We are now ready to assemble the ingredients in the upper bound on $E_\omega^0(R_{n,k}^2)$:

\begin{proofsect}{Proof of Lemma~\ref{L:G-upper}}
First we note that the random variables $(S_x)$ from Lemma~\ref{L:Barlow} satisfy an a.s.\ estimate. Indeed, a Borel-Cantelli argument shows that for any $\theta>1/\delta$ there is a $\PP_\alpha$-a.s.~finite random variable $K=K(\omega)$ such that
\begin{equation}
\max_{x\in B_k^\circ\cap\scrC_{\infty,\alpha}}S_x(\omega)\le [\log t_k]^\theta,\qquad t_k\ge K(\omega).
\end{equation}
Assuming (without loss of generality)~$\theta>1$ and substituting \eqref{E:Barlow} when $|x-y|\ge [\log n]^\theta$ into the definition of $\widetilde{\cmss G}_{\alpha,\omega}(x,y)$ we thus get
\begin{equation}
\label{E:6.14}
\sum_{\begin{subarray}{c}
x,y\\|x-y|\ge[\log n]^\theta
\end{subarray}}
f_k(x) f_k(y)\widetilde{\cmss G}_{\alpha,\omega}(x,y)\le
\sum_{\begin{subarray}{c}
x,y\in B_k^\circ\cap\scrC_{\infty,\alpha}\\|x-y|\ge\log n
\end{subarray}}
c_6\frac{\1_{\AA_n(x)}\1_{\AA_n(y)}}{1+|x-y|^{d-2}}.
\end{equation}
for some absolute constant $c_6=c_6(d)$. For the pairs $(x,y)$ with $|x-y|\le[\log n]^\theta$, here we drop the indicators of $\AA_n(x)$ and $\AA_n(y)$ and invoke the standard fact
\begin{equation}
\widetilde{\cmss G}_{\alpha,\omega}(x,y)\le \widetilde{\cmss G}_{\alpha,\omega}(x,x)^{\ffrac12}\,\widetilde{\cmss G}_{\alpha,\omega}(y,y)^{\ffrac12}.
\end{equation}
Now we apply Cauchy-Schwarz (still under the restriction $|x-y|\le[\log n]^\theta$) to get 
\begin{equation}
\label{E:6.16}
\sum_{\begin{subarray}{c}
x,y\\|x-y|\le[\log n]^\theta
\end{subarray}}
f_k(x) f_k(y)\widetilde{\cmss G}_{\alpha,\omega}(x,y)\le
\sum_{\begin{subarray}{c}
x,y\in B_k^\circ\cap\scrC_{\infty,\alpha}\\|x-y|\le[\log n]^\theta
\end{subarray}}
\widetilde{\cmss G}_{\alpha,\omega}(x,x).
\end{equation}
Summing over~$y$ yields a multiplicative term of order $[\log n]^{d\theta}$ defining $\eta:=d\theta$. The sum over~$x$ is then estimated using the Pointwise Spatial Ergodic Theorem and the bound on the Green's function from Lemma~\ref{L:G-absolute} by a constant times~$t_k^{d/2}$, provided~$t_k$ is sufficiently large. Combining \eqref{E:6.14} and \eqref{E:6.16}, the claim follows.
\end{proofsect}

\section{Heat-kernel input: lower bound}
\label{sec7}\noindent
Our final task in this paper is to establish the lower bound in Lemma~\ref{L:HK-lower}. Unable to directly plug into estimates that exist  in the literature, we will have to reproduce the corresponding argument leading to Proposition~5.1 of Barlow~\cite{Barlow} which is itself based on ideas adapted from Fabes and Stroock~\cite{Fabes-Stroock} and Nash~\cite{Nash}. A slight drawback of this route is that we have to work with the continuous time version of the chain~$\hat X$. 

Fix~$\omega$ with~$0\in\scrC_{\infty,\alpha}$ and consider the (constant-speed) Markov process~$\widetilde X=(\widetilde X_t)_{t\ge0}$ on~$\scrC_{\infty,\alpha}$ with generator~$\LL_{\alpha,\omega}$ that is defined by
\begin{equation}
(\LL_{\alpha,\omega} f)(x):=\sum_{y\in\scrC_{\infty,\alpha}}\hat{\cmss P}_\omega(x,y)\bigl[\,f(y)-f(x)\bigr],\qquad x\in\scrC_{\infty,\alpha}.
\end{equation}
Alternatively, $\widetilde X_t:=\hat X_{N_t}$, where $N_t$ is the rate-one Poisson process at time~$t$.
Let ${\cmss q}_t(x,y)$ denote the associated heat kernel, 
\begin{equation}
{\cmss q}_t(x,y):=\frac{P_\omega^x(\widetilde X_t=y)}{\pi_\omega(y)},\qquad x,y\in\scrC_{\infty,\alpha},
\end{equation}
where, abusing the notation slightly, $P_\omega^x$ denotes the law of~$\widetilde X$ with~$P_\omega^x(\widetilde X_0=x)=1$. (The normalization ensures ${\cmss q}_t(x,y)={\cmss q}_t(y,x)$.) We will need the following estimate:

\begin{proposition}
\label{prop-lower}
Let~$d\ge2$ and $\alpha\in(0,\alpha_0]$.
There are constants~$c_7>0$ and $\xi>0$ and a $\PP_\alpha$-a.s.\ finite random variable $R_0=R_0(\omega)$ such that for  $\PP_\alpha$-a.e.~$\omega$ and all all $R\ge R_0(\omega)$,
\begin{equation}
\label{E:cont-time-HKLB}
\min_{\begin{subarray}{c}
x\in\scrC_{\infty,\alpha}\\|x|\le R
\end{subarray}}
{\cmss q}_t(0,x)\ge c_7\texte^{-\xi tR^{-2}}\,\,R^{-d},\qquad t\ge R^2.
\end{equation}
\end{proposition}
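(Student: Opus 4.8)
The plan is to establish the on-diagonal lower bound ${\cmss q}_t(0,0)\ge c\,t^{-d/2}$ first and then propagate it to the near-diagonal bound \eqref{E:cont-time-HKLB} by a chaining/parabolic argument, following the Nash--Fabes--Stroock strategy as implemented in Barlow~\cite{Barlow}. The key structural inputs are: (i) the Gaussian-type \emph{upper} bound on ${\cmss q}_t$, which one gets from the discrete-time bound of Lemma~\ref{L:HK-upper} together with the Poissonization $\widetilde X_t=\hat X_{N_t}$ and standard concentration of $N_t$; (ii) a volume-regularity statement for $\scrC_{\infty,\alpha}$, i.e.\ $|\scrC_{\infty,\alpha}\cap\Lambda_R(x)|\asymp R^d$ for $R$ large, which follows from the Spatial Ergodic Theorem (used already in the proof of Lemma~\ref{L:density-lower}); and (iii) a Carne--Varopoulos-type or finite-speed input to control the spatial spreading, which is what the distance estimates in Proposition~\ref{P:perc}(3,5) and the Barlow-type exit-time estimates are for. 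Throughout one works with the continuous-time chain because the parabolic Harnack machinery is cleanest there; the statement is then exactly of the form proved in \cite[Prop.~5.1]{Barlow}, so the task is really to check that the hypotheses of that argument hold for $\widetilde X$ on $\scrC_{\infty,\alpha}$ rather than for the simple random walk on a percolation cluster.

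Concretely, I would proceed as follows. \textbf{Step 1 (on-diagonal lower bound).} Show ${\cmss q}_t(0,0)\ge c_8 t^{-d/2}$ for $t\ge t_0(\omega)$. This is the Nash-inequality part: the upper bound ${\cmss q}_s(0,y)\le C s^{-d/2}$ (valid for all $y$, from (i)) combined with conservativeness $\sum_y \pi_\omega(y){\cmss q}_s(0,y)=1$ forces the heat kernel not to decay faster than $t^{-d/2}$ on the diagonal, via the semigroup identity ${\cmss q}_{2s}(0,0)=\sum_y\pi_\omega(y){\cmss q}_s(0,y)^2\ge \bigl(\sum_{y\in\Lambda}\pi_\omega(y){\cmss q}_s(0,y)\bigr)^2/\sum_{y\in\Lambda}\pi_\omega(y)$ for a suitable ball $\Lambda=\Lambda_{A\sqrt s}$; one then uses the upper bound and finite speed (Carne--Varopoulos, using that graph distance on $\scrC_{\infty,\alpha}$ is comparably to Euclidean distance by Proposition~\ref{P:perc}(3)) to show that most of the mass $\sum_y \pi_\omega(y){\cmss q}_s(0,y)=1$ sits inside $\Lambda_{A\sqrt s}$ for $A$ large, and that $\sum_{y\in\Lambda_{A\sqrt s}}\pi_\omega(y)\le C' (A\sqrt s)^d$ by volume regularity (ii). \textbf{Step 2 (oscillation / near-diagonal comparison).} Upgrade the diagonal bound to ${\cmss q}_t(0,x)\ge c_7 t^{-d/2}$ for $|x|\le \epsilon\sqrt t$ by the standard argument comparing ${\cmss q}_t(0,x)$ to ${\cmss q}_{t}(0,0)$: write ${\cmss q}_{2t}(0,x)\ge \sum_{y}\pi_\omega(y){\cmss q}_t(0,y){\cmss q}_t(y,x)$, restrict $y$ to a ball around $0$ of radius $\asymp\sqrt t$ on which both factors are $\gtrsim t^{-d/2}$ (the second one by the same Step-1 argument recentred at $x$, legitimate since $|x|\le\epsilon\sqrt t$), and bound the volume from above. \textbf{Step 3 (long-range, $t\ge R^2$, $|x|\le R$).} For general $R$ and $|x|\le R$ with $t\ge R^2$, iterate: chain $\lceil t/R^2\rceil$ steps of length $\asymp R^2$ using the Chapman--Kolmogorov inequality along a sequence of points spaced $\asymp R$ apart connecting $0$ to $x$ inside $\scrC_{\infty,\alpha}$ — such a chain of bounded length exists by Proposition~\ref{P:perc}(3) (comparability of cluster distance and Euclidean distance) — each step contributing a factor $\ge c R^{-d}$ from Step 2, and collecting the exponential $\texte^{-\xi t R^{-2}}$ from the number of steps. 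This reproduces exactly \eqref{E:cont-time-HKLB}.

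The main obstacle is \textbf{Step 1}, specifically making the ``mass escapes to infinity is negligible'' claim quantitative on the random graph $\scrC_{\infty,\alpha}$: one needs that $\sum_{y:\,|y|>A\sqrt t}\pi_\omega(y){\cmss q}_t(0,y)\le \tfrac12$ for $A$ large and $t$ large, \emph{uniformly} in a way that survives the ergodic-theorem fluctuations of the local volume and of the cluster-distance distortion. The clean route is the Carne--Varopoulos bound for the discrete chain $\hat X$ — $\hat{\cmss P}^n_\omega(0,y)\le 2\sqrt{\pi_\omega(y)/\pi_\omega(0)}\,\exp\{-\textd_{\scrC}(0,y)^2/2n\}$ where $\textd_{\scrC}$ is graph distance on $\scrC_{\infty,\alpha}$ — transferred to continuous time via $N_t$-concentration, combined with Proposition~\ref{P:perc}(3) which gives $\textd_{\scrC}(0,y)\ge c|y|$ for $|y|$ large; the finitely many small-$|y|$ exceptions are harmless. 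A secondary technical point is that all the ``for $t$ large'' and ``for $|y|$ large'' thresholds are random ($\omega$-dependent), so one must be careful to absorb them into the single random variable $R_0(\omega)$ claimed in the proposition, which is routine but must be tracked. The rest (Steps 2--3) is the by-now-standard parabolic chaining and should go through verbatim as in \cite{Barlow,Fabes-Stroock}, once the volume-regularity and distance-comparison inputs from Proposition~\ref{P:perc} and the Spatial Ergodic Theorem are in place.
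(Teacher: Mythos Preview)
Your Step~2 is circular, and this is precisely the hard part of the argument. You write ``restrict $y$ to a ball around $0$ of radius $\asymp\sqrt t$ on which both factors are $\gtrsim t^{-d/2}$ (the second one by the same Step-1 argument recentred at $x$)'', but Step~1 only produces the \emph{diagonal} bound ${\cmss q}_t(z,z)\ge c\,t^{-d/2}$; it says nothing about ${\cmss q}_t(0,y)$ or ${\cmss q}_t(y,x)$ for $y\ne0,x$. In general an on-diagonal lower bound together with a Gaussian upper bound does \emph{not} imply a near-diagonal lower bound without further geometric input --- this is exactly the obstruction the Fabes--Stroock machinery is designed to overcome. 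So although you cite the right reference, your concrete Steps~1--2 do not actually implement the Nash--Fabes--Stroock argument.

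What the paper (following Barlow~\cite[Proposition~5.1]{Barlow}) actually does is work with the averaged log-kernel
\[
H_z(t):=\sum_{y}\nu(y)\log\bigl(V_R\,{\cmss q}_t(z,y)\bigr),
\]
where $\nu$ is a probability measure on $K_R$ weighted by a bump function $\varphi$. A differential inequality for $t\mapsto H_z(t)$ is derived (Lemma~\ref{L:7.2}); the ``bad'' boundary terms are controlled (Lemmas~\ref{L:7.3}--\ref{L:7.4}); and the crucial positive term is bounded below via a \emph{weighted Poincar\'e inequality} on $K_R$ (Lemma~\ref{L:7.6}), which is reduced to Barlow's Theorem~4.8 for the simple random walk on the percolation cluster. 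Integrating the differential inequality gives $H_z(t)\ge-\gamma-\zeta tR^{-2}$ for $z\in K_{R/2}$ and $t$ of order $R^2$ (Lemma~\ref{L-gamma} and Corollary~\ref{cor-monotone}). Then --- and only then --- Chapman--Kolmogorov plus Jensen for $\nu$ yields $\log(V_R{\cmss q}_{2t}(x,y))\ge H_x(t)+H_y(t)$, which is the near-diagonal bound. The weighted Poincar\'e inequality is the geometric input you are missing; it is what replaces your circular step. Your Step~3 (chaining to extend from $t\asymp R^2$ to $t\ge R^2$) is correct and corresponds to Lemma~\ref{lemma-boost}. A minor additional point: the Carne--Varopoulos bound you invoke for $\hat X$ would use the graph distance $\textd'_\omega$ of Proposition~\ref{P:perc}(5), not $\dist_\omega$, since $\hat X$ is not nearest-neighbor on $\scrC_{\infty,\alpha}$.
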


Before we delve into the proof of this claim, let us see how it implies Lemma~\ref{L:HK-lower}:

\begin{proofsect}{Proof of Lemma~\ref{L:HK-lower}}
Consider the quantities
\begin{equation}
a_{n,k}:=\int_{\frac43t_k}^{\frac53t_k}\textd t\,\texte^{-t}\,\frac{t^n}{n!}.
\end{equation}
Then, since $\widetilde X_t$ has the law of $\hat X_n$ at $n:=\text{Poisson}(t)$,
\begin{equation}
\pi_\omega(x)\int_{\frac43t_k}^{\frac53t_k}\textd t\,{\cmss q}_t(0,x)=\sum_{n\ge0}\hat{\cmss P}_\omega^n(0,x)a_{n,k}.
\end{equation}
By Proposition~\ref{prop-lower} with $R:=\sqrt{t_k}$, the integral on the left is order $t_k^{1-d/2}$ uniformly in $x\in B_k\cap\scrC_{\infty,\alpha}$. Since~$a_{n,k}\le1$, $\hat{\cmss P}_\omega^n(0,x)\le1$ and $\pi_\omega(x)\ge\alpha$, it suffices to show
\begin{equation}
\sum_{n\not\in[t_k,2t_k]}a_{n,k}\le\texte^{-ct_k},\quad k\ge1,
\end{equation}
for some~$c>0$. Let~$Z_1,Z_2,\dots$ be i.i.d.\ exponential with parameter one. Then~$a_{n,k}$ is the probability that $Y_n:=Z_1+\dots+Z_{n+1}\in[\frac43t_k,\frac53t_k]$. But the mean of~$Y_n$ is~$n+1$ and~$Z_1$ has exponential moments. So, by Cram\'er's theorem (cf, e.g., den Hollander \cite[Theorem~1.4]{denHolla}) this probability is exponentially small in the distance of~$n$ to $[\frac43t_k,\frac53t_k]$, which is at least $t_k/3$.
\end{proofsect}

The remainder of this section will be spent on proving Proposition~\ref{prop-lower}.
In order to appreciate better the forthcoming definitions, it is instructive to check how the desired lower bound is derived for continuous diffusions in uniformly elliptic environments --- i.e., diffusions on~$\R^d$ with generator $(Lf)(x,t):=\sum_{i,j}\partial_i(a_{ij}(x,t)\partial_j f)(x,t)$, where~$\partial_i$ is the partial derivative with respect to~$x_i$, $i=1,\dots,d$, and where the coefficients $a=(a_{ij})$ are uniformly elliptic in the sense that, for some~$\lambda\in(0,1)$, all~$x\in\R^d$ and all~$t\ge0$,
\begin{equation}
\lambda^{-1}|\xi|^2\le\sum_{ij=1}^d a_{i,j}(x,t)\,\xi_i\xi_j\le\lambda|\xi|^2,\qquad \xi\in\R^d.
\end{equation}
Here Fabes and Stroock (cf~\cite[Section~2]{Fabes-Stroock}) invoke an argument of Nash~\cite{Nash} that goes as follows: Let $y\mapsto\Gamma_a(t,x,y)$ denote the transition density for the above diffusion started at~$x$ and observed at time~$t$. Setting
\begin{equation}
\label{E:H-int}
H_z(t):=\int\textd y\,\texte^{-\pi|y|^2}\log\Gamma_a(t,z,y)
\end{equation}
one then shows, via a differential inequality for~$t\mapsto H_z(t)$, that $H_z(t)\ge c'(\lambda)$ uniformly for all $t\in[0,1]$, all~$z\in\R^d$ with $|z|\le1$ and all~$a$ as above (cf \cite[Lemma~2.1]{Fabes-Stroock}). Invoking the Chapman-Kolmogorov equations,
\begin{equation}
\Gamma_a(2,0,x)\ge\int\textd z\, \texte^{-\pi|z|^2}\,\Gamma_a(1,0,z)\Gamma_a(1,z,x).
\end{equation}
But~$\Gamma_a(1,z,x)=\Gamma_{\tilde a}(1,0,x-z)$ for $\tilde a$ denoting the map of~$a$ under a linear transformation (shift and reflection) of~$\R^d$ and so by taking logs and applying Jensen's inequality for the probability measure $\texte^{-\pi|z|^2}\textd z$ (cf \cite[Lemma~2.6]{Fabes-Stroock}) we get
\begin{equation}
\log\Gamma_a(2,0,x)\ge2c'(\lambda).
\end{equation}
By virtue of shifts and scaling (recall that~$\Gamma_a$ is a \emph{spatial} density and the heat equation is invariant under the diffusive scaling of space and time), the fact that this holds \emph{uniformly} in~$a$ implies the desired claim $\Gamma_a(2t,x,y)\ge\texte^{2c'(\lambda)}t^{-d/2}$ for~$|x-y|\le\sqrt{t}$.

\smallskip
There are several technical obstacles that prevent a direct application of this argument to our present setting. The three most important ones are as follows: 
\begin{enumerate}
\item[(1)]
Our spatial variables are discrete, so the diffusive scaling cannot be used.
\item[(2)]
Our environment is not uniformly elliptic on all scales, so we have to truncate the integral in \eqref{E:H-int} to ``good'' regions.
\item[(3)]
The derivation of the differential inequality for~$t\mapsto H_z(t)$ in \cite{Fabes-Stroock} does not carry directly over to the discrete setting.
\end{enumerate}
Fortunately, all of these obstacles have already been addressed by Barlow in his derivation of a uniform lower bound on the heat-kernel for the random walk on the supercritical percolation cluster; cf~\cite[Proposition~5.1]{Barlow}. So we just need to adapt Barlow's reasoning while paying special attention only to the steps that require modifications due to a (slightly) more general setting.

\smallskip
Suppose~$\omega$ is such that~$0\in\scrC_{\infty,\alpha}$ and recall that $\dist_\omega(x,y)$ stands for the graph-theoretical distance on~$\scrC_{\infty,\alpha}$ between~$x$ and~$y$ --- i.e., the length of the shortest path from~$x$ to~$y$ over edges with $\omega$-conductance at least~$\alpha$. For~$R\ge1$, let
\begin{equation}
K_R:=\bigl\{x\in\scrC_{\infty,\alpha}\colon\dist_\omega(0,x)\le R\bigr\}.
\end{equation}
Introduce the function
\begin{equation}
\varphi(x):=\biggl(\frac{R\wedge\dist_\omega(x,K_R^\cc)}R\biggr)^2
\end{equation}
and consider the weighted measure~$\nu=\nu_{R,\omega}$ defined by
\begin{equation}
\nu(x):=V_R^{-1}\varphi(x)\pi_\omega(x),
\end{equation}
where $V_R:=\sum_x\varphi(x)\pi_\omega(x)$. This $\nu$ will be the analogue of the probability measure $\texte^{-\pi|y|^2}\textd y$ in the continuum setting. Fix~$z\in K_R$ and abbreviate
\begin{equation}
w_{z,t}(y):=\log\bigl(V_R\,{\cmss q}_t(z,y)\bigr),
\end{equation}
where $y\mapsto V_R\,{\cmss q}_t(z,y)$ is the analogue of the transition density $\Gamma_a$. Finally, let
\begin{equation}
H_z(t):=E_\nu\bigl(w_{z,t}(\cdot)\bigr)
\end{equation}
play the role of the quantity in \eqref{E:H-int}. A starting point of the derivation of a differential inequality for~$t\mapsto H_z(t)$ is the following bound:

\begin{lemma}
\label{L:7.2}
Abbreviate $\hat\omega_{xy}:=\pi_\omega(x)\hat{\cmss P}_\omega(x,y)$. Then for any~$z\in K_R$,
\begin{equation}
\label{E:7.13}
\begin{aligned}
V_R \frac{\textd}{\textd t}H_z(t)\,&\ge
\,\frac14\sum_{x,y\in K_R}\hat\omega_{xy}\bigl(\varphi(x)\wedge\varphi(y)\bigr)\bigl[w_{z,t}(x)-w_{z,t}(y)\bigr]^2
\\
&\qquad-\frac14\sum_{x,y\in K_R}\hat\omega_{xy}\frac{(\varphi(x)-\varphi(y))^2}{\varphi(x)\wedge\varphi(y)}
\\
&\qquad\quad-\frac14\sum_{x\in K_R}\sum_{y\in K_R^\cc}\hat\omega_{xy}\,\varphi(x)\biggl(1-\frac{{\cmss q}_t(z,y)}{{\cmss q}_t(z,x)}\biggr).
\end{aligned}
\end{equation}
\end{lemma}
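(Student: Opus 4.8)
The strategy is the standard Nash--Moser differential-inequality computation, discretised in the manner of Barlow~\cite{Barlow}: differentiate $H_z$ in time, use the heat equation and reversibility to turn the result into a localised Dirichlet form, and then absorb the cross terms produced by the non-constant cutoff~$\varphi$.

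Since $\varphi$ has finite support (it vanishes off the finite set $K_R$) and ${\cmss q}_t(z,x)>0$ for $t>0$, one may differentiate $V_RH_z(t)=\sum_x\varphi(x)\pi_\omega(x)\log\bigl(V_R{\cmss q}_t(z,x)\bigr)$ term by term; the $\log V_R$ part is constant in~$t$, so
\[
V_R\frac{\textd}{\textd t}H_z(t)=\sum_{x}\varphi(x)\pi_\omega(x)\,\frac{\partial_t{\cmss q}_t(z,x)}{{\cmss q}_t(z,x)}.
\]
By the heat equation $\partial_t{\cmss q}_t(z,\cdot)=\LL_{\alpha,\omega}{\cmss q}_t(z,\cdot)$ and the symmetry $\pi_\omega(x)\hat{\cmss P}_\omega(x,y)=\hat\omega_{xy}=\hat\omega_{yx}$ (reversibility of $\hat X$) one has $\pi_\omega(x)\partial_t{\cmss q}_t(z,x)=\sum_y\hat\omega_{xy}\bigl({\cmss q}_t(z,y)-{\cmss q}_t(z,x)\bigr)$, whence
\[
V_R\frac{\textd}{\textd t}H_z(t)=\sum_{x,y}\hat\omega_{xy}\,\varphi(x)\Bigl(\frac{{\cmss q}_t(z,y)}{{\cmss q}_t(z,x)}-1\Bigr),
\]
the sum running over $\scrC_{\infty,\alpha}$. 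As $\varphi\equiv0$ on $K_R^\cc$, only pairs with $x\in K_R$ contribute; splitting according to whether $y\in K_R$ or $y\in K_R^\cc$ peels off a boundary flux term of the form appearing as the third term on the right of \eqref{E:7.13}, and leaves an interior sum over $x,y\in K_R$. Using $\hat\omega_{xy}=\hat\omega_{yx}$ this interior sum is symmetrised as $\tfrac12\sum_{x,y\in K_R}\hat\omega_{xy}\bigl[\varphi(x)(r-1)+\varphi(y)(r^{-1}-1)\bigr]$ with $r:={\cmss q}_t(z,y)/{\cmss q}_t(z,x)$, and note that $w_{z,t}(x)-w_{z,t}(y)=-\log r$.

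The analytic heart is then the elementary inequality: for $a,b>0$ and $r>0$,
\[
a(r-1)+b(r^{-1}-1)\ \ge\ \tfrac12(a\wedge b)(\log r)^2-\tfrac12\,\frac{(a-b)^2}{a\wedge b}.
\]
Taking $a\ge b$ (the other case is symmetric), write the left side as $b\,(r-1)^2/r+(a-b)(r-1)$; the bound $(r-1)^2/r=(\sqrt r-r^{-1/2})^2\ge(\log r)^2$ handles the first term, while the cross term $(a-b)(r-1)$ is nonnegative when $r\ge1$ and is controlled by completing the square in $r$ (at the price of $-\tfrac12(a-b)^2/b$) when $r\le1$. Since $\varphi(y)>0$ for every $y\in K_R$, this applies to each interior pair with $\{a,b\}=\{\varphi(x),\varphi(y)\}$ and, after summation, yields the first two terms of \eqref{E:7.13}. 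The only real difficulty is the bookkeeping: the error coming from the non-constancy of $\varphi$ must be kept in the sharp form $(\varphi(x)-\varphi(y))^2/(\varphi(x)\wedge\varphi(y))$, and the boundary flux must be kept essentially intact, since both of these terms will later have to be shown to be of smaller order (in $t$ and $R$) once the differential inequality for $H_z$ is integrated in the proof of Proposition~\ref{prop-lower} --- so it is essential not to discard them into a lossy estimate at this stage.
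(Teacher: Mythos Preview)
Your argument is correct and is exactly the Nash--Fabes--Stroock computation the paper defers to Barlow~\cite{Barlow}; the paper gives no details beyond that citation, so you are supplying precisely what is omitted. The derivation of the time derivative, the symmetrisation over interior pairs, and the elementary inequality
\[
a(r-1)+b(r^{-1}-1)\ \ge\ \tfrac12(a\wedge b)(\log r)^2-\tfrac12\,\frac{(a-b)^2}{a\wedge b}
\]
are all handled correctly (your proof of the inequality implicitly splits $b(r-1)^2/r$ in half, using one half for the $(\log r)^2$ lower bound and the other half to complete the square with the cross term when $r\le1$; it might be worth saying this explicitly).

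One point to flag: your computation produces the boundary flux with coefficient~$-1$, not~$-\tfrac14$ as printed in \eqref{E:7.13}. Since $1-{\cmss q}_t(z,y)/{\cmss q}_t(z,x)$ has no definite sign, one cannot pass from $-1$ to $-\tfrac14$ by monotonicity, so the constant~$\tfrac14$ on the third term appears to be a typo in the lemma as stated. This is harmless for the sequel: Lemma~\ref{L:7.4} bounds the boundary sum from above, and only the resulting constant in Corollary~\ref{cor-monotone} would change. You may wish to note this discrepancy rather than claim the boundary term matches \eqref{E:7.13} verbatim.
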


\begin{proofsect}{Proof}
This is proved by literally following the calculation that begins at the bottom of page~3070 and ends on line (5.9) on page~3071 of~\cite{Barlow}. The fact that we used $\dist_\omega$ instead of (perhaps more natural) $\textd'_\omega$-distance is immaterial for the calculation.
\end{proofsect}
 
Next we will estimate the terms on the right-hand side of \eqref{E:7.13}. Notice that, by \eqref{E:distance-comp}, the graph-theoretical distance $\dist_\omega(0,x)$ and the $\ell_2$-metric $|x|$ are commensurate on~$\scrC_{\infty,\alpha}$. In particular, $K_R$ is contained and contains $\ell_2$-balls of radius of order~$R$ and, by ergodicity of the infinite cluster, $V_R$ thus grows proportionally to~$R^d$ as~$R\to\infty$. 

\begin{lemma}
\label{L:7.3}
There is~$c_8<\infty$ and a $\PP_\alpha$-a.s.~finite random variable~$R_1=R_1(\omega)$ such that
\begin{equation}
\label{E:7.3}
\sum_{x,y\in K_R}\hat\omega_{xy}\frac{(\varphi(x)-\varphi(y))^2}{\varphi(x)\wedge\varphi(y)}
\le c_8 V_R R^{-2},\qquad R\ge R_1(\omega).
\end{equation}
\end{lemma}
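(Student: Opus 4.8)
The plan is to estimate the left-hand side of \eqref{E:7.3} by exploiting the fact that $\varphi$ is a \emph{slowly varying} function along edges of $\scrC_{\infty,\alpha}$: since $\varphi(x)=(R\wedge\dist_\omega(x,K_R^\cc))^2/R^2$ and adjacent vertices $x,y$ in $\scrC_{\infty,\alpha}$ satisfy $|\dist_\omega(x,K_R^\cc)-\dist_\omega(y,K_R^\cc)|\le1$, a telescoping/mean-value estimate gives
\begin{equation}
\bigl|\varphi(x)-\varphi(y)\bigr|\le\frac{1}{R^2}\bigl|\dist_\omega(x,K_R^\cc)^2-\dist_\omega(y,K_R^\cc)^2\bigr|\le\frac{C}{R^2}\bigl(\dist_\omega(x,K_R^\cc)\vee\dist_\omega(y,K_R^\cc)\bigr).
\end{equation}
First I would pair this with the elementary observation that, for $x,y$ both in $K_R$ and adjacent, $\varphi(x)\wedge\varphi(y)$ is comparable to $R^{-2}(\dist_\omega(x,K_R^\cc)\vee\dist_\omega(y,K_R^\cc))^2$ up to the small additive discrepancy coming from the truncation at $R$; here one has to be a little careful near $\partial K_R$ where $\varphi$ can vanish, but there $\dist_\omega(x,K_R^\cc)\le1$ and the numerator $(\varphi(x)-\varphi(y))^2$ is itself $O(R^{-4})$, so the offending ratio is at worst $O(R^{-2})$ per edge. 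Dividing, the summand becomes
\begin{equation}
\hat\omega_{xy}\,\frac{(\varphi(x)-\varphi(y))^2}{\varphi(x)\wedge\varphi(y)}\le C R^{-2}\,\hat\omega_{xy},
\end{equation}
uniformly over edges $x,y\in K_R$ (with the boundary edges absorbed into the same bound).

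Next I would sum over all edges. Using $\hat\omega_{xy}=\pi_\omega(x)\hat{\cmss P}_\omega(x,y)$ and $\sum_{y}\hat{\cmss P}_\omega(x,y)=1$, we get $\sum_{x\in K_R}\sum_{y}\hat\omega_{xy}=\sum_{x\in K_R}\pi_\omega(x)\le 2d\,|K_R|$. On the other hand $V_R=\sum_x\varphi(x)\pi_\omega(x)$, and since $\varphi$ is bounded below by a constant (say $\ge\tfrac14$) on the inner ball $K_{R/2}$, we have $V_R\ge\tfrac14\alpha\,|K_{R/2}|$. By \eqref{E:distance-comp} the metrics $\dist_\omega$ and $|\cdot|$ are commensurate on $\scrC_{\infty,\alpha}$, so both $|K_R|$ and $|K_{R/2}|$ are comparable to $R^d$ for $R$ large; more precisely, by the Pointwise Spatial Ergodic Theorem applied to the indicator of $\scrC_{\infty,\alpha}$ one has $|K_R|/R^d$ and $|K_{R/2}|/R^d$ converging to finite positive constants, $\PP_\alpha$-a.s. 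This yields a $\PP_\alpha$-a.s.\ finite random variable $R_1(\omega)$ such that $|K_R|\le C' V_R$ for all $R\ge R_1(\omega)$, and combining with the per-edge bound gives
\begin{equation}
\sum_{x,y\in K_R}\hat\omega_{xy}\,\frac{(\varphi(x)-\varphi(y))^2}{\varphi(x)\wedge\varphi(y)}\le C R^{-2}\cdot 2d\,|K_R|\le c_8\,V_R R^{-2},
\end{equation}
as claimed.

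The main obstacle is the behavior near $\partial K_R$, where $\varphi(x)\wedge\varphi(y)$ can be as small as (order) $R^{-2}$ while one cannot simply throw such edges away. The clean way around this is the dichotomy already indicated: either both $\dist_\omega(x,K_R^\cc)$ and $\dist_\omega(y,K_R^\cc)$ are $\ge1$, in which case $\varphi(x)\wedge\varphi(y)\ge R^{-2}$ and the numerator is $\le CR^{-4}(\dist_\omega\vee\dist_\omega)^2$ with the same factor in the denominator, giving $O(R^{-2})$; or one of them, say $\dist_\omega(y,K_R^\cc)=0$ so that $y\notin K_R$ — but then that edge does not appear in the sum over $x,y\in K_R$ at all — or $\dist_\omega(y,K_R^\cc)\ge1$ while $x$ is also interior, handled by the first case. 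Thus the only genuinely delicate edges are those with $\dist_\omega(x,K_R^\cc)=\dist_\omega(y,K_R^\cc)=1$ (both on the inner boundary layer), where $\varphi(x)=\varphi(y)=R^{-2}$ and the numerator vanishes identically, contributing nothing. So the boundary is in fact harmless once one separates these cases carefully, and the rest is the routine ergodic-theory volume comparison outlined above.
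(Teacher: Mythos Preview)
There is a genuine gap. You treat the sum in \eqref{E:7.3} as if it ranged only over pairs $x,y$ that are nearest neighbors in the graph $\scrC_{\infty,\alpha}$ (so that $\dist_\omega(x,y)=1$), but $\hat\omega_{xy}=\pi_\omega(x)\hat{\cmss P}_\omega(x,y)$ is the transition weight for the \emph{coarse-grained} walk $\hat X$, which is the full walk $X$ observed only at its visits to $\scrC_{\infty,\alpha}$. From $x\in\scrC_{\infty,\alpha}$ the walk $X$ may step into a finite component of $\Z^d\setminus\scrC_{\infty,\alpha}$ and re-emerge at some $y\in\scrC_{\infty,\alpha}$ with $\dist_\omega(x,y)$ as large as the $\dist_\omega$-diameter of that hole. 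Thus $\hat\omega_{xy}>0$ does not force $|\dist_\omega(x,K_R^\cc)-\dist_\omega(y,K_R^\cc)|\le1$, and your pointwise bound $\hat\omega_{xy}(\varphi(x)-\varphi(y))^2/(\varphi(x)\wedge\varphi(y))\le CR^{-2}\hat\omega_{xy}$ is simply false for such pairs: writing $s:=\dist_\omega(x,y)$, the correct estimate is of order $R^{-2}s^4$, not $R^{-2}$.

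The paper's proof addresses exactly this point. It keeps the factor $\dist_\omega(x,y)^4$ in the per-pair bound and then rewrites the double sum as $\sum_{x\in K_R} h_\alpha\circ\tau_x(\omega)$ with $h_\alpha(\omega):=\1_{\{0\in\scrC_{\infty,\alpha}\}}\sum_{z}\hat\omega_{0,z}\,\dist_\omega(0,z)^4$. The crucial input you are missing is that whenever $\hat\omega_{0,z}>0$ one has $\dist_\omega(0,z)\le\diam_\omega(\scrG_0)$, and by Proposition~\ref{P:perc}(4) this diameter has all moments; combined with $\sum_z\hat\omega_{0,z}=\pi_\omega(0)\le2d$ this gives $\E h_\alpha<\infty$, after which the Spatial Ergodic Theorem yields $\sum_{x\in K_R}h_\alpha\circ\tau_x\le CV_R$ for $R$ large. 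Your boundary case analysis and the volume comparison $|K_R|\asymp V_R$ are fine, but they only become relevant once the long-range jumps of $\hat X$ have been properly accounted for.
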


\begin{proofsect}{Proof}
Let~$x,y\in K_R$ and set $k:=\dist_\omega(x,K_R^\cc)\wedge \dist_\omega(y,K_R^\cc)$ and $s:=\dist_\omega(x,y)$. Then $|\varphi(x)-\varphi(y)|\le (2ks+s^2) R^{-2}$ and so, since~$k\ge1$,
\begin{equation}
\frac{(\varphi(x)-\varphi(y))^2}{\varphi(x)\wedge\varphi(y)}\le\Bigl(\frac{2ks+s^2}k\Bigr)^2R^{-2}\le 9R^{-2}\dist_\omega(x,y)^4.
\end{equation}
Therefore,
\begin{equation}
\label{E:7.20eq}
\text{l.h.s.\ of~\eqref{E:7.3}}
\le 9 R^{-2} \sum_{x\in K_R}h_\alpha\circ\tau_x(\omega).
\end{equation}
where
\begin{equation}
\label{E:halpha}
h_\alpha(\omega):=\1_{\{0\in\scrC_{\infty,\alpha}\}}
\sum_{z\in\scrC_{\infty,\alpha}}\hat\omega_{0,z}\,\dist_\omega(0,z)^4.
\end{equation}
Bounding $\dist_\omega(0,z)\le\text{diam}_\omega(\scrG_0)$ for $\hat\omega_{0,z}>0$, using that $\text{diam}_\omega(\scrG_0)$ has all moments by Proposition~\ref{P:perc}(4) and noting that the sum over $\hat\omega_{0,z}$ equals $\pi_\omega(0)\le2d$, we have $\E h_\alpha(\omega)<\infty$. Since \eqref{E:distance-comp} permits us to dominate the sum over~$x\in K_R$ by that over a cube of side proportional~$R$, the Spatial Ergodic Theorem shows that the sum in \eqref{E:7.20eq} is bounded by a constant times~$V_R$ once~$R$ exceeds a random quantity~$R_1(\omega)$.
\end{proofsect}

\begin{lemma}
\label{L:7.4}
There is a constant~$c_9<\infty$ and a $\PP_\alpha$-a.s.~finite random variable~$R_2=R_2(\omega)$ such that for all~$z$,
\begin{equation}
\sum_{x\in K_R}\sum_{y\in K_R^\cc}\hat\omega_{xy}\,\varphi(x)\biggl(1-\frac{{\cmss q}_t(z,y)}{{\cmss q}_t(z,x)}\biggr)
\le c_9V_R R^{-2},\qquad R\ge R_2(\omega).
\end{equation}
\end{lemma}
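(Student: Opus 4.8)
The plan is to follow the template of Lemma~\ref{L:7.3}: first discard the heat-kernel ratio, then reduce the boundary sum to a spatial ergodic average of a local functional that has all moments, and finally invoke the Pointwise Spatial Ergodic Theorem together with the linear growth of $\dist_\omega$ from \eqref{E:distance-comp} and the $R^d$-growth of $V_R$. Disposing of the heat kernel is immediate: the chain $\widetilde X$ is irreducible on the connected graph $\scrC_{\infty,\alpha}$, so ${\cmss q}_t(z,x)>0$ for every $t>0$ and all $z,x\in\scrC_{\infty,\alpha}$, while ${\cmss q}_t(z,y)\ge0$; hence $1-{\cmss q}_t(z,y)/{\cmss q}_t(z,x)\le1$, and it suffices to prove, uniformly in $z$ and $t$, that $\sum_{x\in K_R}\sum_{y\in K_R^\cc}\hat\omega_{xy}\,\varphi(x)\le c_9 V_R R^{-2}$ for $R\ge R_2(\omega)$.

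The one genuinely new observation is a pointwise bound on $\varphi$ across the boundary of $K_R$: if $\hat\omega_{xy}>0$ and $y\in K_R^\cc$ then $\dist_\omega(x,y)\ge\dist_\omega(x,K_R^\cc)$, and therefore
\[
\varphi(x)=\Bigl(\frac{R\wedge\dist_\omega(x,K_R^\cc)}{R}\Bigr)^2\le\frac{\dist_\omega(x,K_R^\cc)^2}{R^2}\le\frac{\dist_\omega(x,y)^2}{R^2}.
\]
Substituting this into the sum and then enlarging the inner summation from $y\in K_R^\cc$ to all $y\in\scrC_{\infty,\alpha}$ (all terms are nonnegative) reduces the claim to
\[
\frac1{R^2}\sum_{x\in K_R}(h_\alpha'\circ\tau_x)(\omega)\le c_9 V_R R^{-2},\qquad
h_\alpha'(\omega):=\1_{\{0\in\scrC_{\infty,\alpha}\}}\sum_{z\in\scrC_{\infty,\alpha}}\hat\omega_{0,z}\,\dist_\omega(0,z)^2,
\]
where I have used the shift covariance of $\hat\omega$, $\dist_\omega$ and $\scrC_{\infty,\alpha}$.

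From here the argument is exactly that of Lemma~\ref{L:7.3}. Bounding $\dist_\omega(0,z)\le1\vee\diam_\omega(\scrG_0)$ whenever $\hat\omega_{0,z}>0$, using $\sum_z\hat\omega_{0,z}=\pi_\omega(0)\le2d$, and recalling from Proposition~\ref{P:perc}(4) that $\diam_\omega(\scrG_0)$ has all moments, one gets $\E h_\alpha'<\infty$ (a weaker statement than the fourth-power moment bound already established in Lemma~\ref{L:7.3}). Since \eqref{E:distance-comp} lets one dominate the sum over $x\in K_R$ by a sum over a cube of side proportional to $R$, the Pointwise Spatial Ergodic Theorem then yields $\sum_{x\in K_R}(h_\alpha'\circ\tau_x)(\omega)\le cR^d$ once $R$ exceeds a $\PP_\alpha$-a.s.\ finite $R_2(\omega)$; and since $\varphi\ge\ffrac{1}{4}$ on $K_{R/2}$ while $\pi_\omega\ge\alpha$, the cluster-density ergodic theorem gives $V_R\ge\ffrac{\alpha}{4}|K_{R/2}|\ge c'R^d$ for $R$ large, which converts $cR^d$ into a constant multiple of $V_R$ and finishes the proof with $c_9:=c/c'$. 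I do not anticipate a real obstacle here: the lemma is both weaker than and parallel to Lemma~\ref{L:7.3}, the only new input being the elementary boundary inequality above, and the only point needing care is the $L^1$ bound on $h_\alpha'$, which follows at once from the all-moments property of $\diam_\omega(\scrG_0)$.
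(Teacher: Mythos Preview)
Your proof is correct and follows essentially the same route as the paper: drop the heat-kernel ratio via $1-{\cmss q}_t(z,y)/{\cmss q}_t(z,x)\le1$, use the boundary inequality $\varphi(x)\le R^{-2}\dist_\omega(x,y)^2$, and reduce to an ergodic average of a local functional with finite expectation. The only cosmetic difference is that the paper further bounds $\dist_\omega(x,y)^2\le\dist_\omega(x,y)^4$ so as to reuse the exact functional $h_\alpha$ from \eqref{E:halpha} and the estimate \eqref{E:7.20eq} verbatim, whereas you introduce a separate $h_\alpha'$ with the second power; either choice works.
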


\begin{proofsect}{Proof}
Since~$\dist_\omega(x,K_R^\cc)\le\dist_\omega(x,y)$ whenever~$x\in K_R$ and~$y\in K_R^\cc$, we can dominate
\begin{equation}
\varphi(x)\le R^{-2}\dist_\omega(x,y)^2\le R^{-2}\dist_\omega(x,y)^4
\end{equation}
for each pair~$x,y$ contributing to the sum. Dropping the ratio of the ${\cmss q}_t$-terms, the result is estimated by the right-hand side of \eqref{E:7.20eq}.
\end{proofsect}

\begin{corollary}
\label{cor-monotone}
For~$R\ge R_1(\omega)\vee R_2(\omega)$ and all~$z\in K_R$, the function
\begin{equation}
t\mapsto H_z(t)+\frac14(c_8+c_9)R^{-2}\,t
\end{equation}
is non-decreasing on $[0,\infty)$.
\end{corollary}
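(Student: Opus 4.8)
The plan is to read the claim off directly from Lemma~\ref{L:7.2}, feeding in the two estimates of Lemmas~\ref{L:7.3} and~\ref{L:7.4}; essentially no new work is required. First I would observe that the first sum on the right-hand side of the differential inequality \eqref{E:7.13} is term-by-term non-negative: $\hat\omega_{xy}\ge0$, $\varphi(x)\wedge\varphi(y)\ge0$, and $[w_{z,t}(x)-w_{z,t}(y)]^2\ge0$. This ``Dirichlet-form'' contribution may therefore simply be dropped, leaving only the two error sums on the right of \eqref{E:7.13} to be controlled.

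Next I would insert the bounds from the preceding two lemmas. For $R\ge R_1(\omega)$, Lemma~\ref{L:7.3} controls the second sum by $c_8 V_R R^{-2}$; for $R\ge R_2(\omega)$, Lemma~\ref{L:7.4} controls the third sum by $c_9 V_R R^{-2}$, uniformly in $z\in K_R$ and --- since the ${\cmss q}_t$-ratio is discarded in its proof --- uniformly in $t$. Substituting these into \eqref{E:7.13} yields, for every $R\ge R_1(\omega)\vee R_2(\omega)$, every $z\in K_R$ and every $t>0$,
\[
V_R\,\frac{\textd}{\textd t}H_z(t)\ge-\frac14(c_8+c_9)\,V_R R^{-2}.
\]
Since $V_R=\sum_x\varphi(x)\pi_\omega(x)>0$ (note that $\varphi>0$ on the non-empty set $K_R$, as $K_R$ is a proper subset of the infinite cluster $\scrC_{\infty,\alpha}$), I would divide by $V_R$ to obtain $\frac{\textd}{\textd t}H_z(t)\ge-\frac14(c_8+c_9)R^{-2}$, which is exactly the assertion that $t\mapsto H_z(t)+\frac14(c_8+c_9)R^{-2}\,t$ has non-negative derivative on $(0,\infty)$, hence is non-decreasing there; since $H_z$ does not jump upward as $t\downarrow0$, monotonicity extends to all of $[0,\infty)$.

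I do not anticipate any genuine obstacle: this corollary is a bookkeeping step that packages Lemmas~\ref{L:7.2}--\ref{L:7.4} into the single monotone quantity that will later drive the Nash-type iteration leading to Proposition~\ref{prop-lower}. The only points requiring care are already absorbed into Lemma~\ref{L:7.2} --- namely that $t\mapsto H_z(t)$ is differentiable and that the computation producing \eqref{E:7.13} is legitimate; for $t>0$ one has ${\cmss q}_t(z,\cdot)>0$ on $\scrC_{\infty,\alpha}$ by irreducibility of $\hat{\cmss P}_\omega$, so the logarithms $w_{z,t}$ and the ratios in the last sum of \eqref{E:7.13} are well defined.
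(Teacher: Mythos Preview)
Your proposal is correct and follows essentially the same approach as the paper: drop the non-negative Dirichlet-form term in \eqref{E:7.13}, bound the remaining two sums by Lemmas~\ref{L:7.3}--\ref{L:7.4}, and read off $H_z'(t)\ge-\tfrac14(c_8+c_9)R^{-2}$. The paper's version is just a two-line compression of what you wrote.
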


\begin{proofsect}{Proof}
Let~$h(t):=H_z(t)+\frac14(c_8+c_9)R^{-2}\,t$ and note that, by Lemmas~\ref{L:7.3}-\ref{L:7.4}, $V_R\,h'(t)$ exceeds the first term on the right-hand side of \eqref{E:7.13}. In particular, $h'(t)\ge0$.
\end{proofsect}

The bounds on the last two terms in \eqref{E:7.13} suggest that perhaps also the first term should be at least of order~$R^{-2}$. This is indeed the case thanks to:

\begin{lemma}[Weighted Poincar\'e inequality]
\label{L:7.6}
For any~$\alpha\in(0,\alpha_0]$, there is a constant $c_{10}=c_{10}(\alpha,d)>0$ and a $\PP_\alpha$-a.s.\ finite random variable~$R_3=R_3(\omega)$ such that
\begin{equation}
\label{}
V_R^{-1}\sum_{x,y}\hat\omega_{xy}\bigl(\varphi(x)\wedge\varphi(y)\bigr)\bigl[\,f(x)-f(y)\bigr]^2
\ge c_{10}\, R^{-2}\,\text{\rm Var}_\nu(f)
\end{equation}
holds for any~$R\ge R_3$ and any function $f\colon\scrC_{\infty,\alpha}(\omega)\to\R$ with support in~$K_R$.
\end{lemma}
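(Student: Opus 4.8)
The plan is to reduce this weighted inequality to a standard (unweighted) Poincar\'e inequality on the supercritical percolation cluster $\scrC_{\infty,\alpha}$, which is available in the literature. First I would record two structural facts: by Proposition~\ref{P:perc}(3) and the ensuing remarks, the graph distance $\dist_\omega(0,\cdot)$ on $\scrC_{\infty,\alpha}$ is comparable to the Euclidean distance, so $K_R$ is sandwiched between $\ell_2$-balls of radii of order $R$; and by ergodicity $V_R\asymp R^d$ for $R$ large. The weight $\varphi(x)=\bigl((R\wedge\dist_\omega(x,K_R^\cc))/R\bigr)^2$ is of order $1$ on the ``bulk'' ball $K_{R/2}$ (say) and decays quadratically to zero near the boundary of $K_R$, so $\nu$ is comparable to a constant multiple of $\pi_\omega$ restricted to $K_{R/2}$.

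The main step is a chaining/annular decomposition. I would partition $K_R$ into dyadic annular shells $A_j:=\{x\in\scrC_{\infty,\alpha}\colon \dist_\omega(x,K_R^\cc)\in[2^{j-1},2^j)\}$ for $0\le j\lesssim\log_2 R$, on which $\varphi\asymp 4^{j}/R^2$. On each shell one has a local Poincar\'e inequality at scale $2^j$: the standard Poincar\'e inequality on the cluster (as in \cite{BP,Barlow-Hambly}, or via the isoperimetry of $\scrC_{\infty,\alpha}$ used there) gives, for $R$ exceeding an a.s.-finite random variable,
\begin{equation}
\label{E:local-poinc}
\sum_{x,y\in A_j}\hat\omega_{xy}[f(x)-f(y)]^2\ge \frac{c}{4^j}\sum_{x\in A_j}\pi_\omega(x)\bigl(f(x)-\langle f\rangle_{A_j}\bigr)^2 ,
\end{equation}
where $\langle f\rangle_{A_j}$ is the $\pi_\omega$-average over $A_j$. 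Multiplying \eqref{E:local-poinc} by the shell value $\varphi\asymp 4^j/R^2$ and noting $\varphi(x)\wedge\varphi(y)\asymp 4^j/R^2$ on edges internal to a shell (edges bridging adjacent shells contribute comparably, since $\varphi$ changes by at most a constant factor across one shell), the powers of $4^j$ cancel and one obtains $R^{-2}\sum_j\sum_{x\in A_j}\pi_\omega(x)(f(x)-\langle f\rangle_{A_j})^2$ on the right, which is $\asymp R^{-2}$ times a ``blockwise variance.'' It then remains to upgrade the blockwise variance to the global variance $\mathrm{Var}_\nu(f)$. This is a telescoping argument across the nested balls: using the local Poincar\'e inequality once more at each scale to control $(\langle f\rangle_{A_{j+1}}-\langle f\rangle_{A_j})^2$ by the Dirichlet form on a slightly enlarged shell, the total cost of joining all the block averages is again of order $R^{-2}$ times the Dirichlet form, and the geometric series in $j$ sums to a constant. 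Absorbing constants yields the claim with $c_{10}=c_{10}(\alpha,d)>0$ and $R_3(\omega)$ the maximum of the finitely many a.s.-finite thresholds from the local Poincar\'e inequalities and the volume/distance comparisons.

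The hard part will be the chaining step that promotes the blockwise variance to the true $\nu$-variance while keeping every constant scale-free: one must ensure the local Poincar\'e constants on the annuli $A_j$ are uniform in $j$ (which follows from the scale-invariant isoperimetry of the supercritical cluster, valid once $R$ is large enough that every $A_j$ is ``regular'' — the source of the random $R_3$), and one must handle the weight mismatch on edges straddling two shells and near the innermost ball $K_{R/2}$ where $\varphi$ flattens out. An alternative, and perhaps cleaner, route to the same end is to follow Barlow's treatment \cite[\S5]{Barlow} of the analogous weighted Poincar\'e inequality for the percolation cluster verbatim, checking only that the sole input specific to his setting — a Poincar\'e inequality on boxes for the walk with uniformly elliptic conductances, here replaced by the walk on $\scrC_{\infty,\alpha}$ with conductances set to one — holds in our slightly more general framework; this is exactly the content of \cite{BP,Barlow-Hambly}. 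I would present the proof along these lines, citing Barlow for the combinatorial chaining and supplying only the comparison of the weighted and unweighted Dirichlet forms together with the volume growth $V_R\asymp R^d$.
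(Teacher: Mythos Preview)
Your ``alternative, cleaner route'' is essentially what the paper does, and in fact the paper's argument is even shorter than you envision. Barlow's Theorem~4.8 in~\cite{Barlow} is \emph{already} the weighted Poincar\'e inequality on the percolation cluster with exactly this weight~$\varphi$ (this is why~$\varphi$ was defined using~$\dist_\omega$). So there is no chaining to redo: the paper simply applies that theorem to the simple random walk on~$\scrC_{\infty,\alpha}$, obtaining the inequality with conductances $\widetilde\omega_{xy}:=\1_{\{|x-y|=1,\,\omega_{xy}\ge\alpha\}}$ and reference measure $\kappa(x)\propto\varphi(x)d_\omega(x)$, and then transfers it by the two elementary comparisons $\widetilde\omega_{xy}\le\alpha^{-1}\hat\omega_{xy}$ and $d_\omega(x)\ge(2d)^{-1}\pi_\omega(x)$ (the latter yielding $\text{Var}_\kappa(f)\ge c\,\text{Var}_\nu(f)$ via the usual mean-minimizing trick). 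The random threshold~$R_3$ is the one from Barlow guaranteeing that~$K_R$ is ``very good.''

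Your primary annular approach, however, has a genuine gap. The shells $A_j=\{x:\dist_\omega(x,K_R^\cc)\in[2^{j-1},2^j)\}$ have \emph{thickness} $\sim 2^j$ but \emph{diameter} $\sim R$; a Poincar\'e inequality on such a set carries constant of order~$R^{-2}$ (at best), not $c\,4^{-j}$ as you wrote in \eqref{E:local-poinc}. With the correct constant the cancellation against $\varphi\asymp 4^j/R^2$ fails and the argument collapses. The decomposition that does work --- and that underlies Barlow's proof of his Theorem~4.8 --- is a Whitney-type covering by \emph{cubes} whose side is comparable to their distance to~$\partial K_R$, so that diameter and distance-to-boundary are both $\sim 2^j$; on each such cube the local Poincar\'e constant genuinely scales like $4^{-j}$, and the chaining of cube-averages along a Whitney chain to the center then goes through. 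If you want to present a self-contained argument rather than quote Barlow, that is the decomposition to use.
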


\begin{proofsect}{Proof}
We will reduce this to the corresponding statement in \cite[Theorem~4.8]{Barlow}. Consider the collection of conductances $(\widetilde\omega_{xy})$ defined by
\begin{equation}
\widetilde\omega_{xy}:=\1_{|x-y|=1}\1_{\{\omega_{xy}\ge\alpha\}}\1_{\{x\in\scrC_{\infty,\alpha}\}},\quad x,y\in\Z^d,
\end{equation}
and let $\kappa(x):=\widetilde V_R^{-1}\varphi(x)\1_{\{x\in\scrC_{\infty,\alpha}\}}d_\omega(x)$, where we recall the notation \eqref{E:degree} and where~$\widetilde V_R$ is the number that makes~$\kappa$ a probability measure. By Theorem~4.8 and the fact that~$K_R$ is ``very good'' (in the language of~\cite{Barlow}) once~$R$ exceeds a random quantity~$R_3(\omega)$, we have
\begin{equation}
\label{E:7.19}
\widetilde V_R^{-1}\sum_{x,y}\widetilde\omega_{xy}\bigl(\varphi(x)\wedge\varphi(y)\bigr)\bigl[\,f(x)-f(y)\bigr]^2
\ge \tilde c_4 R^{-2}\text{\rm Var}_{\kappa}(f)
\end{equation}
for some constant~$\tilde c_4>0$, provided that~$R\ge R_3(\omega)$. (Here is where it is essential that~$\varphi$ is defined using the distance measured on the percolation graph~$\scrC_{\infty,\alpha}$.) The bound $d_\omega(x)\ge(2d)^{-1}\pi_\omega(x)$ for~$x\in\scrC_{\infty,\alpha}$ yields
\begin{multline}
\label{E:7.20}
\qquad\quad
\text{\rm Var}_{\kappa}(f):=\sum_x\kappa(x)\bigl[\,f(x)-E_\kappa(f)\bigr]^2
\\
\ge
\frac{V_R}{\widetilde V_R}\,\frac1{2d}\sum_x\nu(x)\bigl[\,f(x)-E_\kappa(f)\bigr]^2
\ge \frac{V_R}{\widetilde V_R}\,\frac1{2d}\text{\rm Var}_{\nu}(f),
\qquad\quad
\end{multline}
where we noted that the second sum is further decreased when $E_\kappa(f)$ is replaced by~$E_\nu(f)$. (Namely, $a\mapsto \E((Z-a)^2)$ is minimized by~$a=\E Z$.) Since $\widetilde\omega_{xy}\le\alpha^{-1}\hat\omega_{xy}$, \twoeqref{E:7.19}{E:7.20} now yield the claim with~$c_{10}:=\alpha(2d)^{-1}\tilde c_4$.
\end{proofsect}

The core part of the calculation is now finished by noting the following fact:

\begin{lemma}
\label{L:7.7}
Let~$\tilde c=\tilde c(T,R,z)$ be defined by $\tilde c(T,R,z):=\sup_{t\ge T}\sup_y{\cmss q}_t(z,y)V_R$. Then
\begin{equation}
\label{E:7.26}
\text{\rm Var}_{\nu}(w_{z,t})\ge\frac{[\log\tilde c-H_z(t)]^2}{9\tilde c}\Bigl(P_\omega^z\bigl(\text{\rm dist}_\omega(z,\widetilde X_t)\le\tfrac23R\bigr)-9\texte^{2+H_z(t)}\Bigr)
\end{equation}
holds for all~$t\ge T$ and all~$z\in\scrC_{\infty,\alpha}$.
\end{lemma}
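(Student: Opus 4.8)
The plan is to carry out, directly on $w_{z,t}$, the Nash--Fabes--Stroock entropy estimate in the discrete guise used by Barlow. Write $g(y):=\texte^{w_{z,t}(y)}=V_R\,{\cmss q}_t(z,y)$, so that $g\le\tilde c$ on $\scrC_{\infty,\alpha}$ for $t\ge T$ by the definition of $\tilde c$; set $\bar w:=H_z(t)=E_\nu(w_{z,t})$ and $\Delta:=\log\tilde c-\bar w\ge0$; and abbreviate $p:=P_\omega^z\bigl(\dist_\omega(z,\widetilde X_t)\le\tfrac23 R\bigr)$. If $p\le9\,\texte^{2+H_z(t)}$ the asserted inequality is trivial, its right-hand side being nonpositive while $\text{\rm Var}_\nu(w_{z,t})\ge0$; so we may assume $p>9\,\texte^{2+H_z(t)}$, which (as $p\le1$) forces $\texte^{2+\bar w}<\tfrac19$, hence $\bar w$ very negative and $\Delta$ --- so also $\Delta^2$ --- bounded below by an explicit absolute constant. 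This is exactly what prevents the factor $\Delta^2$ from being the dominant term.

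The first real step converts $p$ into a $\nu$-average of the density $g$. On the event defining $p$ the endpoint $\widetilde X_t$ lands where $\varphi$ is not small --- concretely $\varphi(\widetilde X_t)\ge\tfrac19$, because $\varphi(y)=\bigl((R\wedge\dist_\omega(y,K_R^{\cc}))/R\bigr)^2$ and, by the triangle inequality on $\scrC_{\infty,\alpha}$ together with $z$ lying in the support $K_R$ of $\nu$, one has $\dist_\omega(y,K_R^{\cc})\ge R/3$ on that event. (This is where it matters that $\varphi$ is built from the percolation-graph distance $\dist_\omega$, exactly as in the proof of Lemma~\ref{L:7.6}.) Hence, for every such $y$, $\pi_\omega(y)/V_R=\nu(y)/\varphi(y)\le9\,\nu(y)$, and since $P_\omega^z(\widetilde X_t=y)={\cmss q}_t(z,y)\,\pi_\omega(y)=g(y)\,\pi_\omega(y)/V_R$, summing over the $y$ in the event gives
\[
p\ \le\ 9\sum_{y}g(y)\,\1_{\{\dist_\omega(z,y)\le\tfrac23 R\}}\,\nu(y)\ \le\ 9\,E_\nu(g).
\]

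Next, for any level $\lambda>0$ I split the sum $E_\nu(g)$ at $\bar w+\lambda$: on $\{w_{z,t}<\bar w+\lambda\}$ we have $g<\texte^{\bar w+\lambda}=\texte^{H_z(t)+\lambda}$, so that part contributes at most $\texte^{H_z(t)+\lambda}$; on $\{w_{z,t}\ge\bar w+\lambda\}$ we have $g\le\tilde c$, so that part contributes at most $\tilde c\,\nu(\{w_{z,t}\ge\bar w+\lambda\})$. Therefore $p/9\le\texte^{H_z(t)+\lambda}+\tilde c\,\nu(\{w_{z,t}\ge\bar w+\lambda\})$, i.e.
\[
\nu\bigl(\{w_{z,t}\ge\bar w+\lambda\}\bigr)\ \ge\ \frac{p-9\,\texte^{\lambda+H_z(t)}}{9\,\tilde c}.
\]
Combining this with the elementary bound $\text{\rm Var}_\nu(w_{z,t})\ge E_\nu\bigl((w_{z,t}-\bar w)^2\,\1_{\{w_{z,t}\ge\bar w+\lambda\}}\bigr)\ge\lambda^2\,\nu(\{w_{z,t}\ge\bar w+\lambda\})$ gives
\[
\text{\rm Var}_\nu(w_{z,t})\ \ge\ \frac{\lambda^2}{9\,\tilde c}\bigl(p-9\,\texte^{\lambda+H_z(t)}\bigr),\qquad \lambda>0 .
\]
With $\lambda=2$ this already has the shape $\tfrac{4}{9\tilde c}\bigl(p-9\,\texte^{2+H_z(t)}\bigr)$ of \eqref{E:7.26}; for large $\Delta$ one instead chooses $\lambda$ to be the largest value for which $9\,\texte^{\lambda+H_z(t)}\le p/2$, which --- since $p\le1$ --- is a $\lambda$ of order $\Delta$, and then the displayed inequality gives $\text{\rm Var}_\nu(w_{z,t})\ge\lambda^2p/(18\tilde c)\gtrsim\Delta^2\,p/\tilde c$. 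Taking the better of the two choices and absorbing numerical constants into the ``$9$'' (which \eqref{E:7.26} tolerates) yields the claim.

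The step I expect to be the real obstacle is the geometric one: checking that on the event in $p$ the walk genuinely lies in the part of $K_R$ where $\varphi\ge\tfrac19$, so that the mismatch between the heat-kernel mass (weighted by $\pi_\omega/V_R$) and the probability measure $\nu$ (weighted by $\varphi$ and centred at the origin) is absorbed by a fixed constant; this is what pins down the $\tfrac23R$ and the $1/9$, and may require noting that in the application the starting point $z$ stays well inside $K_R$ (equivalently, phrasing the event so its endpoint falls in $K_{2R/3}$). Everything else --- the trivial-regime reduction, the choice of level $\lambda$, matching the absolute constants in \eqref{E:7.26}, and reading $\tilde c$ as $\tilde c\vee1$ so that ``$p>9\,\texte^{2+H_z(t)}$ forces $\Delta$ large'' is clean --- is routine bookkeeping.
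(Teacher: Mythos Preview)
Your reconstruction is the same argument the paper is citing from Barlow: the identity $E_\nu(g)=\sum_y\varphi(y)\pi_\omega(y)\,{\cmss q}_t(z,y)=E_\omega^z[\varphi(\widetilde X_t)]$ followed by the level-set split of $E_\nu(g)$ at height $\texte^{\bar w+\lambda}$ (with the cap $g\le\tilde c$ controlling the high part) is exactly the discrete Nash--Fabes--Stroock step in Barlow's (5.8--5.9).

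Your geometric worry is well placed and in fact exposes a slip in the lemma as written. The implication ``$z\in K_R$ and $\dist_\omega(z,y)\le\tfrac23R$ $\Rightarrow$ $\varphi(y)\ge\tfrac19$'' is false for general~$z$ (take $z$ near the boundary of~$K_R$). What the computation actually yields is $E_\nu(g)\ge\tfrac19\,P_\omega^z\bigl(\widetilde X_t\in K_{2R/3}\bigr)$, i.e.\ with distance measured from the \emph{center}~$0$ of~$K_R$, not from~$z$. This reading is corroborated by the proof of Lemma~\ref{L-gamma}: the otherwise gratuitous intermediate step $P_\omega^z(\dist_\omega(z,\widetilde X_t)>\tfrac23R)\le P_\omega^z(\dist_\omega(z,\widetilde X_t)>\tfrac16R)$ is precisely the triangle-inequality conversion from $\{\dist_\omega(0,\widetilde X_t)>\tfrac23R\}$ to $\{\dist_\omega(z,\widetilde X_t)>\tfrac16R\}$ for $z\in K_{R/2}$. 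So your suggested fix --- phrasing the event as $\widetilde X_t\in K_{2R/3}$ --- is the intended statement. One smaller point: your level-set bound produces $\lambda^2$ in the numerator, and your ``better of two $\lambda$'s'' does not literally recover the factor $(\log\tilde c-H_z(t))^2/9$; the optimal $\lambda$ is $-\bar w+O(1)$, which differs from $\Delta$ by $\log\tilde c+O(1)$. You thus get the stated form only up to absolute constants depending on a bound for~$\tilde c$. This is harmless, since in the sole application $\tilde c$ is fixed equal to $\tilde c_5\tilde c_7(24\tilde c_6)^d$.
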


\begin{proofsect}{Proof}
This is justified by following the calculation in displays (5.8-5.9) of~\cite{Barlow} just stopping short of substituting the explicit bound (5.2) at the very last step.
\end{proofsect}

Now we are ready to start constructing the proof of the lower bound on~${\cmss q}_t(0,x)$. Suppose $d\ge2$. First we notice that we do not need to prove the desired claim for all $t\ge R^2$ and $|x|\le R$; it suffices to prove it for $t$ a constant multiple larger and~$|x|$ a constant multiple smaller than is dictated by these bounds. (We will find it is easier to prove this using $\ell_\infty$-distances; hence the formulation using those).

\begin{lemma}
\label{lemma-boost}
Let $\alpha\in(0,\alpha_0]$. There exists a $\PP_\alpha$-a.s.\ finite random variable $R_6=R_6(\omega)$ and a constant $c=c(d,\alpha)\in(0,1)$ such that the following is true: If for some decreasing function $s\mapsto\beta(s)\in(0,1)$, a constant $\eta\in(0,\ffrac12)$ and all integers $R\ge R_6$, 
\begin{equation}
\label{E:7:29a}
\min_{\begin{subarray}{c}
x\in\scrC_{\infty,\alpha}\\|x|_\infty\le R
\end{subarray}}\,\,
\min_{\begin{subarray}{c}
y\in\scrC_{\infty,\alpha}\\|y-x|_\infty\le \eta R
\end{subarray}}
{\cmss q}_t(x,y)\ge \beta(tR^{-2})\,\,R^{-d},\qquad t\ge 
R^2/\eta,
\end{equation}
then for all $R\ge 4R_6/\eta$,
\begin{equation}
\label{E:7:30a}
\min_{\begin{subarray}{c}
x\in\scrC_{\infty,\alpha}\\|x|_\infty\le \eta R
\end{subarray}}\,\,
\min_{\begin{subarray}{c}
y\in\scrC_{\infty,\alpha}\\|y-x|_\infty\le R
\end{subarray}}
{\cmss q}_t(x,y)\ge \bigl[c\eta^d\beta(tR^{-2})\bigr]^{4/\eta}\,\,R^{-d},\qquad t\ge 
R^2.
\end{equation}
\end{lemma}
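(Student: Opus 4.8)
The plan is to build a path of the walk from~$x$ to~$y$ out of a bounded number of short jumps, each controlled by the hypothesis \eqref{E:7:29a}, and to glue them by repeated use of the Chapman--Kolmogorov identity
\[
{\cmss q}_{s+u}(v,w)=\sum_{z\in\scrC_{\infty,\alpha}}{\cmss q}_s(v,z)\,{\cmss q}_u(z,w)\,\pi_\omega(z),
\]
restricting each intermediate sum to a box of points of~$\scrC_{\infty,\alpha}$ around a way‑point on the $\ell_\infty$‑segment from~$x$ to~$y$. I would fix $R_6=R_6(\omega)$ so that, for $R\ge R_6$, every axis‑parallel box of side at least a fixed multiple of~$R_6$ lying within $\ell_\infty$‑distance~$4R$ of the origin contains at least a fixed multiple of its volume many points of~$\scrC_{\infty,\alpha}$; this is available $\PP_\alpha$‑a.s.\ from Proposition~\ref{P:perc}(2) (finiteness, hence macroscopic sparseness, of the complementary components) together with the spatial ergodic theorem. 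The assumption $R\ge 4R_6/\eta$ then guarantees that all the jump scales and box side lengths used below exceed~$R_6$, so these density bounds are in force.

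First I would dispose of large~$t$. If $t\ge R^2/\eta^3$, a single jump suffices: applying \eqref{E:7:29a} at the scale $R':=\lceil R/\eta\rceil$ — an integer exceeding $R\ge R_6$, with $(R')^2/\eta\le t$, $\eta R'\ge R\ge|y-x|_\infty$ and $R'\ge|x|_\infty$ — gives ${\cmss q}_t(x,y)\ge\beta\bigl(t(R')^{-2}\bigr)(R')^{-d}\ge 2^{-d}\eta^d\beta(tR^{-2})R^{-d}$, using that $\beta$ is decreasing, $t(R')^{-2}\le tR^{-2}$ and $(R')^{-d}\ge(2R/\eta)^{-d}$. Choosing the constant~$c$ at most~$2^{-d}$ and using $0<c\eta^d\beta(tR^{-2})<1$ with $4/\eta>1$, this exceeds $\bigl[c\eta^d\beta(tR^{-2})\bigr]^{4/\eta}R^{-d}$, which is \eqref{E:7:30a}.

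For the remaining range $R^2\le t<R^2/\eta^3$ I would run the chain. Fix a jump scale~$\rho$ of order~$R$ (an integer large enough that all way‑points and boxes stay within $\ell_\infty$‑distance~$\rho$ of the origin), let~$m$ be the least integer with $m\rho^2/\eta\le t$ and $m\cdot\ffrac13\eta\rho\ge|y-x|_\infty$, and pick way‑points $x=p_0,\dots,p_m=y$ with $|p_i-p_{i-1}|_\infty\le\ffrac13\eta\rho$ and boxes~$Q_i$ of side~$\ffrac13\eta\rho$ around~$p_i$ for $0<i<m$. Iterating Chapman--Kolmogorov with the sums restricted to $Q_i\cap\scrC_{\infty,\alpha}$ yields
\[
{\cmss q}_t(x,y)\ge\sum_{\substack{z_i\in Q_i\cap\scrC_{\infty,\alpha}\\ 1\le i<m}}\ \prod_{i=1}^{m}{\cmss q}_{t/m}(z_{i-1},z_i)\ \prod_{i=1}^{m-1}\pi_\omega(z_i),\qquad z_0:=x,\ z_m:=y.
\]
For each factor, $z_{i-1}$ and~$z_i$ lie within $\ell_\infty$‑distance~$\eta\rho$ of one another and within~$\rho$ of the origin, and $t/m\ge\rho^2/\eta$, so \eqref{E:7:29a} bounds it below by $\beta\bigl((t/m)\rho^{-2}\bigr)\rho^{-d}\ge c\,\beta(tR^{-2})\rho^{-d}$ (monotonicity of~$\beta$ and $(t/m)\rho^{-2}\le tR^{-2}$); using $\pi_\omega\ge\alpha$ and $|Q_i\cap\scrC_{\infty,\alpha}|\ge c(\eta\rho)^d$ and telescoping the powers of~$\rho$ leaves ${\cmss q}_t(x,y)\ge\bigl[c\alpha(\eta\rho)^d\bigr]^{m-1}\bigl[c\,\beta(tR^{-2})\rho^{-d}\bigr]^{m}\ge\bigl[c'\eta^d\beta(tR^{-2})\bigr]^{m}\rho^{-d}$. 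Since~$\rho$ is of order~$R$, $c'\eta^d\beta(tR^{-2})<1$, and — this is the point — the choice of~$\rho$ together with a subdivision of the range of~$t$ (each piece treated at its own scale) can be arranged so that $m\le\lceil 4/\eta\rceil$, raising the exponent to~$4/\eta$ only weakens the bound, giving \eqref{E:7:30a}.

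The genuinely delicate step is this control of the chain. Because a jump carrying the walk to distance of order~$R$ from the origin is governed by \eqref{E:7:29a} only at a scale of order~$R$, hence only from time of order~$R^2/\eta$ onward, the number of links and the splitting of~$t$ among them cannot be chosen naively; one must exploit the fact that \eqref{E:7:29a} holds at every integer scale~$\ge R_6$, and subdivide the $t$‑range, so as to keep the number of links proportional to~$1/\eta$ rather than a higher power. That accounting, together with the cluster‑geometry estimate guaranteeing the density of~$\scrC_{\infty,\alpha}$ inside each intermediate box~$Q_i$ (which is precisely where~$R_6$ and the hypothesis $R\ge 4R_6/\eta$ enter), is the technical heart of the argument; everything else is routine manipulation of the heat kernel.
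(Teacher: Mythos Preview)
Your approach---chaining via Chapman--Kolmogorov through boxes of $\scrC_{\infty,\alpha}$ centred on way-points between $x$ and $y$, using the spatial ergodic theorem for the cluster density in each box and $\pi_\omega\ge\alpha$ for the measure factors---is exactly the paper's. The paper's execution is, however, more direct and avoids all of your case analysis. It fixes $N:=\lfloor\eta R/3\rfloor$, chooses a path $z_0,\dots,z_m$ on the sublattice $(N\Z)^d$ with $|z_i|_\infty\le R$, $|z_{i+1}-z_i|_\infty=N$, and with $z_0,z_m$ within~$N$ of $x,y$ respectively. Because $|y-x|_\infty\le R$ and the step size is $N\approx\eta R/3$, such a path exists with $1/\eta\le m<4/\eta-1$, and this~$m$ is determined purely by the geometry, \emph{not} by~$t$. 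One then writes
\[
\cmss q_{mt}(x,y)\ge\alpha^m\sum_{x_1\in\Lambda_N(z_1)\cap\scrC_{\infty,\alpha}}\!\!\cdots\!\!\sum_{x_m\in\Lambda_N(z_m)\cap\scrC_{\infty,\alpha}}\ \prod_i\cmss q_t(x_i,x_{i+1}),
\]
applies \eqref{E:7:29a} at the single scale~$R$ to every factor (since $|x_{i+1}-x_i|_\infty\le3N\le\eta R$), plugs in $|\Lambda_N(z_i)\cap\scrC_{\infty,\alpha}|\ge\tfrac12p N^d$, and substitutes~$t$ for~$mt$; the monotonicity of~$\beta$ takes care of the time rescaling inside~$\beta$, and $m+1<4/\eta$ gives the exponent. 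Your separate treatment of large~$t$ via a single jump at scale $R/\eta$, your variable scale~$\rho$, and your proposed ``subdivision of the $t$-range'' are therefore unnecessary complications: the step you flagged as ``genuinely delicate'' dissolves once you realise that the number of links is fixed by the distance $|y-x|_\infty$ alone, and the same chain serves for all admissible~$t$.
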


It is worth noting that this does not follow by a simple rescaling of~$R$. Indeed, to reduce the lower bound on the range of $t$ one has to also reduce the separation between~$x$ and~$y$. 

\begin{proofsect}{Proof of Lemma~\ref{lemma-boost}}
Abbreviate $\Lambda_N(x):=x+[-N/2,N/2]^d\cap\Z^d$ and set $p:=\PP(0\in\scrC_{\infty,\alpha})$. By the Spatial Ergodic Theorem, there exists a random variable $R_6'=R_6'(\omega)$ such that, for all $N\ge R_6'$, every box $\Lambda_N(x)$ with $x\in(N\Z)^d$ and $|x|_\infty\le 4N/\eta$, will contain at least $\frac12pN^d$ vertices of~$\scrC_{\infty,\alpha}$. Set $R_6:=4R_6'/\eta$, pick $R\ge R_6$ and note that $N:=\lfloor\eta R/3\rfloor\ge R_6'$. Now pick $x,y\in\scrC_{\infty,\alpha}$ with $|x|_\infty\le\eta R$ and $|y-x|_\infty\le R$ and let $z_0,\dots,z_{m}\in(N\Z)^d$ be a path such that
\begin{equation}
|z_i|_\infty\le R\quad\text{and}\quad|z_{i+1}-z_i|_\infty=N,\qquad i=1,\dots,m-1,
\end{equation}
and
\begin{equation}
|x-z_0|_\infty\le N\quad\text{and}\quad|y-z_{m+1}|_\infty\le N.
\end{equation}
It is not hard to check that such a path exists for $1/\eta\le m<4/\eta-1$. By Chapman-Kolmogorov and the fact that $\pi_\omega(\cdot)\ge\alpha$ on~$\scrC_{\infty,\alpha}$,
\begin{equation}
\cmss q_{mt}(x,y)\ge\alpha^m\sum_{x_1\in\Lambda_N(z_1)\cap\scrC_{\infty,\alpha}}\!\!\dots\!\!\sum_{x_m\in\Lambda_N(z_m)\cap\scrC_{\infty,\alpha}}\,\,\,\prod_{i=0}^{m-1}\cmss q_t(x_i,x_{i+1})
\end{equation}
where $x_0:=x$ and $x_{m+1}:=y$. Since
\begin{equation}
|x_i-x_{i+1}|_\infty\le|x_i-z_i|_\infty+|z_i-z_{i+1}|_\infty+|x_{i+1}-z_{i+1}|_\infty\le3N\le\eta R,
\end{equation}
we are permitted to apply the lower bound \eqref{E:7:29a} to each term in the product. Along with the bound $|\Lambda_N(z_m)\cap\scrC_{\infty,\alpha}|\ge \frac12 pN^d$, this yields
\begin{equation}
\cmss q_{mt}(x,y)\ge\bigl(\tfrac12\alpha p N^d\bigr)^m\bigl(\beta(tR^{-2}) R^{-d}\bigr)^{m+1},\qquad t\ge R^2/\eta.
\end{equation}
Writing $t$ for~$mt$, invoking the monotonicity of $t\mapsto\beta(tR^{-2})$ and the bounds $1/\eta\le m+1<4/\eta$ the claim follows with $c:=\frac12\alpha p 3^{-d}$.
\end{proofsect}

Our task is thus to establish the premise \eqref{E:7:29a} of the previous lemma. We begin by recalling the following bounds from~\cite{BP}: There is a $\PP_\alpha$-a.s.\ finite random variable~$t_0=t_0(\omega)$ and constants $\tilde c_5,\tilde c_6<\infty$ such that for $\PP_\alpha$-a.e.~$\omega$ and all~$t\ge t_0(\omega)$,
\begin{equation}
\label{E:q-bd}
\sup_{z\in K_t}\,\sup_{x\in\scrC_{\infty,\alpha}}{\cmss q}_t(z,x)\le\tilde c_5\,t^{-d/2}
\end{equation}
and
\begin{equation}
\label{E:E-bd}
\sup_{z\in K_t}E_\omega^z\, \dist_\omega(z,\widetilde X_t)\le \tilde c_6\sqrt t.
\end{equation}
These are implied by \cite[Propositions~6.1,6.2]{BP} via the argument (6.33-6.37) in~\cite{BP} and also the fact that the graph-theoretical distance, the Euclidean distance and also the distance associated with the Markov chain~$\hat X$ on~$\scrC_{\infty,\alpha}$ are commensurate; cf~Proposition~\ref{P:perc}(3-5). 
Incidentally, the latter also yields
\begin{equation}
\label{E:V-bd}
V_R\le\tilde c_7 R^d,\qquad R\ge R_4(\omega),
\end{equation}
for some constant~$\tilde c_7<\infty$ and a $\PP_\alpha$-a.s.\ finite random variable  $R_4=R_4(\omega)$.

Introduce the constants $\tilde c:=\tilde c_5\tilde c_7(24\tilde c_6)^d$ and $\tilde c':=\frac12c_{10}(144\tilde c)^{-1}$ and define
\begin{equation}
\gamma:=(2|\log\tilde c|)\vee(2+\log36)\vee\,\frac{(24\tilde c_6)^2}{\tilde c'}\vee\sqrt{\frac{c_8+c_9}{4\tilde c'}}.
\end{equation}
Then we have:

\begin{lemma}
\label{L-gamma}
There is a $\PP_\alpha$-a.s.\ finite random variable~$R_7=R_7(\omega)$ such that for all~$R\ge R_7(\omega)$ and $T:=R^2/(24\tilde c_6)^2$, 
\begin{equation}
\label{E:supremum}
\sup_{T\le t\le2T}H_z(t)> -\gamma,\qquad z\in K_{R/2}.
\end{equation}
\end{lemma}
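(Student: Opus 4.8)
The plan is to run, in our discrete and only locally uniformly elliptic setting, the Nash/Fabes--Stroock differential-inequality argument in the form transcribed by Barlow~\cite[Proposition~5.1]{Barlow}, with $H_z(t)$ as the entropy functional and $G(t):=\log\tilde c-H_z(t)$ as its deficit. Here $\tilde c$ denotes both the quantity $\sup_{t\ge T}\sup_y\cmss q_t(z,y)V_R$ of Lemma~\ref{L:7.7} and its upper bound $\tilde c_5\tilde c_7(24\tilde c_6)^d$ coming from \eqref{E:q-bd}--\eqref{E:V-bd} (also $\ge1$ without loss); the two are interchangeable in the estimates below. Fix $\alpha\in(0,\alpha_0]$ and work on the full-$\PP_\alpha$-measure event on which Proposition~\ref{P:perc}, Lemma~\ref{L:7.2}, Lemmas~\ref{L:7.3}--\ref{L:7.4}, Lemma~\ref{L:7.6} and \eqref{E:q-bd}--\eqref{E:V-bd} all hold; let $R_7=R_7(\omega)$ exceed all the a.s.\ finite radii and times appearing there, and be large enough that every $R\ge R_7$ satisfies $T\ge t_0(\omega)$ and $R/2\le T$, with $T:=R^2/(24\tilde c_6)^2$, so that $z\in K_{R/2}$ implies $z\in K_t$ for all $t\ge T$. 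I argue by contradiction, supposing $H_z(t)\le-\gamma$ for every $t\in[T,2T]$.

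The first step is to bound below the variance that enters Lemma~\ref{L:7.2}. Markov's inequality applied to \eqref{E:E-bd} gives, for $t\in[T,2T]$,
\begin{equation*}
P_\omega^z\bigl(\dist_\omega(z,\widetilde X_t)\le\tfrac23R\bigr)\ \ge\ 1-\frac{3\tilde c_6\sqrt{2T}}{2R}\ =\ 1-\frac{3\sqrt2}{48}\ >\ \tfrac34,
\end{equation*}
because $\sqrt{2T}=\sqrt2\,R/(24\tilde c_6)$. The terms $\gamma\ge2+\log36$ and $\gamma\ge2|\log\tilde c|$ of the definition of $\gamma$, combined with $H_z(t)\le-\gamma$, yield $9\texte^{2+H_z(t)}\le\tfrac14$ and $G(t)=\log\tilde c-H_z(t)\ge\log\tilde c+\gamma\ge\gamma>0$ (using $\log\tilde c\ge0$). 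Feeding these into Lemma~\ref{L:7.7} gives $\text{\rm Var}_\nu(w_{z,t})\ge G(t)^2/(18\tilde c)$ for all $t\in[T,2T]$.

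Next I would combine this with Lemma~\ref{L:7.2}. The first sum on the right of \eqref{E:7.13} is a $\varphi$-weighted Dirichlet form of $w_{z,t}$; since $\varphi$ vanishes off $K_R$, only pairs $x,y\in K_R$ contribute, so the weighted Poincar\'e inequality Lemma~\ref{L:7.6}, applied to the restriction of $w_{z,t}$ to $K_R$, bounds that sum below by $c_{10}R^{-2}V_R\,\text{\rm Var}_\nu(w_{z,t})$, while Lemmas~\ref{L:7.3}--\ref{L:7.4} bound the other two sums above by $\tfrac14(c_8+c_9)V_RR^{-2}$. Dividing by $V_R$, inserting the variance bound, and using $H_z'=-G'$ and $\tilde c'=\tfrac12 c_{10}(144\tilde c)^{-1}$, one arrives at the Riccati inequality
\begin{equation*}
G'(t)\ \le\ -4\tilde c'\,R^{-2}\,G(t)^2+\tfrac14(c_8+c_9)\,R^{-2},\qquad t\in[T,2T].
\end{equation*}
Let $g_\ast:=\tfrac12\sqrt{(c_8+c_9)/(4\tilde c')}$, the positive equilibrium, so that $2g_\ast=\sqrt{(c_8+c_9)/(4\tilde c')}\le\gamma$. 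If $G(t_\star)<2g_\ast$ for some $t_\star\in[T,2T]$, then $H_z(t_\star)=\log\tilde c-G(t_\star)>\log\tilde c-2g_\ast\ge-\gamma$, contradicting $H_z(t_\star)\le-\gamma$. Otherwise $G\ge2g_\ast$ on all of $[T,2T]$, whence $G^2-g_\ast^2\ge\tfrac34 G^2$ and the inequality improves to $G'(t)\le-3\tilde c'R^{-2}G(t)^2$; integrating $\tfrac{\textd}{\textd t}(1/G)\ge3\tilde c'R^{-2}$ over $[T,2T]$ (of length $T=R^2/(24\tilde c_6)^2$) and discarding $1/G(T)>0$ gives $G(2T)\le(24\tilde c_6)^2/(3\tilde c')<(24\tilde c_6)^2/\tilde c'\le\gamma$, which contradicts $H_z(2T)\le-\gamma$ since that forces $G(2T)=\log\tilde c-H_z(2T)\ge\gamma$. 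Either way the supposition is impossible, so \eqref{E:supremum} holds; $R_7(\omega)$ is then the maximum of the finitely many a.s.\ finite random variables invoked above.

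The conceptual content is entirely carried by the preceding lemmas --- the discrete Nash inequality Lemma~\ref{L:7.2}, the weighted Poincar\'e inequality Lemma~\ref{L:7.6}, the deficit estimate Lemma~\ref{L:7.7}, and the heat-kernel and volume bounds \eqref{E:q-bd}--\eqref{E:V-bd} imported from~\cite{BP} --- each a non-uniformly-elliptic adaptation of the corresponding ingredient in Barlow~\cite{Barlow}. The main obstacle here is purely arithmetic: verifying that the four quantities assembled into $\gamma$ simultaneously (i) force the exponential factor in Lemma~\ref{L:7.7} down to at most $\tfrac14$, (ii) keep $G(t)$ positive and at least $\gamma$, (iii) keep the additive error $\tfrac14(c_8+c_9)R^{-2}$ dominated by the quadratic term of the Riccati inequality once $G\ge2g_\ast$, and (iv) make the integration bound $(24\tilde c_6)^2/(3\tilde c')$ strictly smaller than $\gamma$ --- i.e., lining up the length of $[T,2T]$ against the decay rate of $G$. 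I expect (iv) to be the most delicate piece of this bookkeeping.
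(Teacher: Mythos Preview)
Your proof is correct and follows essentially the same Nash/Fabes--Stroock contradiction argument as the paper: assume $H_z\le-\gamma$ on $[T,2T]$, combine Lemmas~\ref{L:7.2}--\ref{L:7.4}, \ref{L:7.6} and~\ref{L:7.7} into a Riccati-type differential inequality, and integrate over $[T,2T]$ to force a contradiction with the choice of~$\gamma$. The paper's write-up differs only in bookkeeping --- it works with $H_z$ directly (using $(\log\tilde c-H_z)^2\ge\tfrac14 H_z^2$ when $H_z\le-2|\log\tilde c|$, which sidesteps your ``$\tilde c\ge1$ without loss'') and absorbs the additive error $\tfrac14(c_8+c_9)R^{-2}$ into the quadratic term in one stroke rather than via your two-case Riccati split.
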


\begin{proofsect}{Proof}
Setting $R_5(\omega):=\sup\{R\ge0\colon T\le t_0(\omega)\}$, define $R_7(\omega):=\max_{i=1,\dots,5}R_i(\omega)$.
First we note that, for~$t\in[t_0(\omega),2T]$ and~$z\in K_{R/2}$, \eqref{E:E-bd} implies
\begin{equation}
P_\omega^z(\text{\rm dist}_\omega(z,\widetilde X_t)>\tfrac23R\bigr)\le
P_\omega^z(\text{\rm dist}_\omega(z,\widetilde X_t)>\tfrac16R\bigr)\le\tilde c_6\frac{6\sqrt t}R\le\frac12.
\end{equation}
By \eqref{E:q-bd}, Lemma~\ref{L:7.7} holds for our choices of~$T$ and~$\tilde c$, and \eqref{E:7.26} then becomes
\begin{equation}
\label{E:7.29}
\text{\rm Var}_{\nu}(w_{z,t})\ge\frac{[\log\tilde c-H_z(t)]^2}{\tilde c}\Bigl(\frac1{18}-\texte^{2+H_z(t)}\Bigr),
\end{equation}
provided that $R\ge R_4\vee R_5$, $z\in K_{R/2}$ and $t\in[T,2T]$. 

Suppose now that the supremum in \eqref{E:supremum} is less than $-\gamma$ for some~$z\in K_{R/2}$. Then (by $\gamma\le 2+\log 36$) we would have $\texte^{2+H_z(t)}\le\ffrac1{36}$ and, by way of the fact that $(a-h)^2\ge\frac14h^2$ holds whenever $h\le-2|a|$, also $[\log\tilde c-H_z(t)]^2\ge \frac14 H_z(t)^2$. The right-hand side of \eqref{E:7.29} would then be at least $(144\tilde c)^{-1}H_z(t)^2$ for all $t\in[T,2T]$. If~$R\ge R_7$, Lemmas~\ref{L:7.2}, \ref{L:7.3}, \ref{L:7.4}, \ref{L:7.6} then give
\begin{equation}
H_z'(t)\ge 2\tilde c'R^{-2} H_z(t)^2-\frac14(c_8+c_9)R^{-2}\ge \tilde c'R^{-2} H_z(t)^2,\qquad t\in[T,2T].
\end{equation}
But integrating over $t\in[T,2T]$ (and using that $H_z(t)^{-2}$ stays bounded throughout by the assumption that the supremum in \eqref{E:supremum} is less than $-\gamma$) yields 
\begin{equation}
H_z(T)^{-1}\ge \tilde c'TR^{-2}+H_z(2T)^{-1}
\ge \tilde c'TR^{-2}-\frac1\gamma\ge0,
\end{equation}
thus contradicting the assumption that $H(t)\le-\gamma<0$ for all $t\in[T,2T]$.
\end{proofsect}

\begin{proofsect}{Proof of Proposition~\ref{prop-lower}}
As in the continuous setting, a key part of the proof is to show a (linear) lower bound on~$H_z(t)$.  Let $R_0':=R_1\vee R_2\vee R_4\vee R_7$, abbreviate $\zeta:=\frac14(c_8+c_9)$ and recall our notation for~$T$ above. By Lemma~\ref{L-gamma}, there is a~$t'\in[T,2T]$ for which $H_z(t')\ge-\gamma$. The monotonicity of $t\mapsto H_z(t)+\zeta R^{-2}t$ (cf Corollary~\ref{cor-monotone}) shows
\begin{equation}
H_z(t)\ge H_z(t')-\zeta R^{-2}(t-t'),\qquad t\ge t',
\end{equation}
and so
\begin{equation}
\label{E:H-bd}
H_z(t)\ge -\gamma-\zeta t R^{-2}
\end{equation}
holds for all $R\ge R_0'(\omega)$, all $t\ge2T$ and all $z\in K_{R/2}$.

To see how this implies the desired claim, we invoke the Markov property, reversibility and the fact that $\varphi(x)\le1$ to get
\begin{equation}
V_R{\cmss q}_{2t}(x,y)\ge\sum_{z}V_R{\cmss q}_t(x,z)\,V_R\,{\cmss q}_t(y,z)\,\nu(y).
\end{equation}
Taking logs and applying Jensen's inequality, this becomes
\begin{equation}
\log\bigl(V_R{\cmss q}_{2t}(x,y)\bigr)\ge H_x(t)+H_y(t)\ge -2\gamma-2\zeta R^{-2}t,\qquad x,y\in K_{R/2},
\end{equation}
whenever $t\ge 2T$ and $R\ge R_0'$. By the domination $\dist_\omega(x,y)\le d|x-y|_\infty$, the ball $K_{R/2}$ contains all vertices~$x\in\scrC_{\infty,\alpha}$ with $|x|_\infty\le R/(2d)$. A simple rescaling of~$R$ then yields \eqref{E:7:29a} with $\beta(s):=\tilde c_7^{-1}(2d)^{-d}\texte^{-2\gamma-\zeta s/(2d)^2}$, $\eta:=36\tilde c_6^2/d^2$ and $R_6:=R_0'/(2d)$. Invoking Lemma~\ref{lemma-boost} (if $\eta<1$) and the comparison $|x|\le|x|_\infty$, the claim follows.
\end{proofsect}

\section*{Acknowledgments}
\noindent
O.B.\ would like to thank Pierre Mathieu for his support and to dedicate this work to his
father, Youcef Bey. The research of M.B.\ was partially supported by the NSF grant DMS-0949250 and the GA\v CR project P201-11-1558. We wish to thank an anonymous referee for spotting a minor, albeit important, typo in a first write-up of this text.

\end{document}